\documentclass[11pt]{amsart}

\usepackage[T1]{fontenc}
\usepackage{lmodern}
\usepackage{microtype}
\usepackage{amsmath,amssymb,amsthm,mathtools}
\usepackage{tikz-cd}
\usepackage{tikz}
\usepackage{booktabs,tabularx,array}
\usepackage{enumitem}
\usepackage{aliascnt}
\usepackage{xcolor}
\definecolor{refblue}{RGB}{0,0,150}
\definecolor{citered}{RGB}{180,0,0}
\usepackage[colorlinks=true,linkcolor=refblue,citecolor=citered,urlcolor=citered]{hyperref}

\setlist[enumerate]{label=(\arabic*),leftmargin=2.1em}
\setlist[itemize]{leftmargin=1.8em}
\allowdisplaybreaks
\numberwithin{equation}{section}

\newtheorem{theorem}{Theorem}[section]
\newaliascnt{proposition}{theorem}
\newtheorem{proposition}[proposition]{Proposition}
\aliascntresetthe{proposition}
\newaliascnt{lemma}{theorem}
\newtheorem{lemma}[lemma]{Lemma}
\aliascntresetthe{lemma}
\newaliascnt{corollary}{theorem}
\newtheorem{corollary}[corollary]{Corollary}
\aliascntresetthe{corollary}
\newaliascnt{assumption}{theorem}
\newtheorem{assumption}[assumption]{Assumption}
\aliascntresetthe{assumption}
\newaliascnt{question}{theorem}

\aliascntresetthe{question}
\theoremstyle{definition}
\newaliascnt{definition}{theorem}
\newtheorem{definition}[definition]{Definition}
\aliascntresetthe{definition}
\newaliascnt{example}{theorem}
\newtheorem{example}[example]{Example}
\aliascntresetthe{example}
\theoremstyle{remark}
\newaliascnt{remark}{theorem}
\newtheorem{remark}[remark]{Remark}
\aliascntresetthe{remark}

\usepackage[nameinlink,noabbrev]{cleveref}
\crefname{theorem}{Theorem}{Theorems}
\Crefname{theorem}{Theorem}{Theorems}
\crefname{proposition}{Proposition}{Propositions}
\Crefname{proposition}{Proposition}{Propositions}
\crefname{lemma}{Lemma}{Lemmas}
\Crefname{lemma}{Lemma}{Lemmas}
\crefname{corollary}{Corollary}{Corollaries}
\Crefname{corollary}{Corollary}{Corollaries}
\crefname{assumption}{Assumption}{Assumptions}
\Crefname{assumption}{Assumption}{Assumptions}
\crefname{question}{Question}{Questions}
\Crefname{question}{Question}{Questions}
\crefname{definition}{Definition}{Definitions}
\Crefname{definition}{Definition}{Definitions}
\crefname{example}{Example}{Examples}
\Crefname{example}{Example}{Examples}
\crefname{remark}{Remark}{Remarks}
\Crefname{remark}{Remark}{Remarks}

\newcommand{\E}{\mathcal E}
\newcommand{\Acat}{\mathcal A}
\newcommand{\Xcat}{\mathcal X}
\newcommand{\Ccat}{\mathcal C}
\newcommand{\Scat}{\mathcal S}
\newcommand{\Pcat}{\mathcal P}
\newcommand{\Fac}{\operatorname{Fac}}
\newcommand{\Sub}{\operatorname{Sub}}
\newcommand{\Filt}{\operatorname{Filt}}
\newcommand{\add}{\operatorname{add}}
\newcommand{\Proj}{\operatorname{Proj}}
\newcommand{\rad}{\operatorname{rad}}
\newcommand{\topm}{\operatorname{top}}
\newcommand{\ann}{\operatorname{ann}}
\newcommand{\End}{\operatorname{End}}
\newcommand{\Hom}{\operatorname{Hom}}
\newcommand{\Ext}{\operatorname{Ext}}
\newcommand{\Tor}{\operatorname{Tor}}
\newcommand{\pd}{\operatorname{pd}}
\newcommand{\gldim}{\operatorname{gldim}}
\newcommand{\findim}{\operatorname{findim}}
\newcommand{\Findim}{\operatorname{Findim}}
\newcommand{\dell}{\operatorname{dell}}
\newcommand{\ddell}{\operatorname{ddell}}

\newcommand{\gddell}{\operatorname{gddell}}
\newcommand{\depth}{\operatorname{depth}}
\newcommand{\grade}{\operatorname{grade}}
\newcommand{\Fddell}{\operatorname{Fddell}}
\newcommand{\Mod}{\operatorname{Mod}}
\newcommand{\modu}{\operatorname{mod}}

\newcommand{\Ker}{\operatorname{Ker}}
\newcommand{\op}{\mathrm{op}}

\newcommand{\stable}[1]{\underline{#1}}
\newcommand{\barA}{\overline A}

\title[Relative Derived Delooping Levels and $\tau$-Tilting Modules]{Relative Derived Delooping Levels and $\tau$-Tilting Modules}
\author[H. Gao]{Hanpeng Gao}
\address{School of Mathematical Sciences, Anhui University, Hefei 230601, Anhui, P. R. China}
\email{hpgao@ahu.edu.cn}

\author[D. Liu]{Dajun Liu$^{*}$}
\address{School of Mathematics-Physics and Finance,
	Anhui Polytechnic University,
	Wuhu 241000, Anhui, P. R. China}
\email{liudajun@ahpu.edu.cn}

\author[H. Zhang]{Houjun Zhang}
\address{School of Science,
	Nanjing University of Posts and Telecommunications,
	Nanjing 210023, Jiangsu, P. R. China}
\email{zhanghoujun@njupt.edu.cn}

\thanks{*Corresponding author}

\date{}

\begin{document}

\begin{abstract}
	
Let $\mathcal{E}$ be an essentially small idempotent-complete exact category with enough projective objects. We develop derived delooping levels  in $\mathcal{E}$. We apply this framework to a basic support $\tau$-tilting module $T$. We then define the derived delooping level relative to $T$ and compare it with the ordinary relative delooping level and the derived delooping level of $B=\operatorname{End}_A(T)$. We show that the big finitistic dimension of $B^{\mathrm{op}}$ is bounded above by the derived delooping level of $\operatorname{Fac}T$ relative to $T$. Moreover, we prove that a self-orthogonal $\tau$-tilting module $T$ is a $1$-tilting module whenever $2\text{-}\ddell_B(DT)$ is finite.	 
\end{abstract}

\maketitle

\noindent\textbf{2020 Mathematics Subject Classification.} 16G10, 16E05.

\smallskip
\noindent\textbf{Keywords.} Derived delooping level, exact category, $\tau$-tilting module, finitistic dimension, torsion-free class, endomorphism algebra.

\section{Introduction}\label{sec:introduction}

The finitistic dimension conjecture is one of the central open problems in the representation theory of Artin algebras. For a finite-dimensional algebra $A$, it asks whether \[ \operatorname{findim}A = \sup\{ \operatorname{pd}_{A}M \mid M\in\operatorname{mod}A,\ \operatorname{pd}_{A}M<\infty \} \] is finite. G\'elinas \cite{Gelinas} introduced the delooping level as a syzygetic invariant related to this problem and showed that it gives an upper bound for the opposite big finitistic dimension. Guo and Igusa \cite{GuoIgusa} later introduced the derived delooping level and proved, for every finite-dimensional algebra $A$, \[ \operatorname{Findim}A^{\mathrm{op}} = \operatorname{edell}A \leq \ddell A \leq \dell A. \] The distinction between the ordinary and derived invariants is genuine: there are finite-dimensional algebras for which $\ddell A$ is finite whereas $\dell A$ is infinite \cite{GuoSymmetry}. The aim of this paper is to develop a relative version of derived delooping theory associated with support $\tau$-tilting modules and to apply it to the homological properties of their endomorphism algebras. Let  $T$ be a basic support $\tau$-tilting right $A$-module, $ B=\operatorname{End}_{A}(T)$, and  $\mathcal C=\operatorname{Fac}T.$ The category $\mathcal C$, equipped with the exact structure induced from $\operatorname{mod}A$, has enough projective objects.  Relative homological invariants of $\operatorname{Fac}T$ have recently been used to study both the finitistic dimension of $B^{\mathrm{op}}$, the depth and the ordinary relative delooping level associated with $T$ provide bounds for the finitistic dimension of $B^{\mathrm{op}}$, see \cite{XuZhang}.  These results naturally lead to the question whether ordinary relative delooping can be replaced by relative derived  delooping.  

We first develop derived delooping levels over exact categories. Let $\mathcal E$ be an essentially small idempotent-complete exact category with enough projective objects. We prove, among other things, that for every conflation \[ 0\longrightarrow X\longrightarrow Y\longrightarrow Z \longrightarrow 0 \] and every $k\geq 1$, \[ k\text{-}\operatorname{ddell}_{\mathcal E}Y \leq k\text{-}\operatorname{ddell}_{\mathcal E}X + k\text{-}\operatorname{ddell}_{\mathcal E}Z +1. \] 
If $X\rightarrowtail Y$ is an inflation, then $\ddell_{\E}X\leq\ddell_{\E}Y+1$. Consequently, the objects of finite derived delooping level form an admissibly torsion-free resolving exact subcategory with enough projective objects.
We also prove a transfer result for exact functors of finite projective amplitude. In particular, exact equivalences preserve ordinary and derived delooping levels. 

We also define a canonical relative derived delooping level $ k\text{-}\operatorname{ddell}_{T}\mathcal C$ for $\mathcal C=\operatorname{Fac}T$. Its definition is obtained from the canonical semibrick of $\mathcal C$ and the boundary objects corresponding, under the Brenner--Butler equivalence, to the radicals of the projective covers of those simple $B$-modules which do not belong to $\operatorname{Sub}(DT)$. This construction is the derived analogue of the ordinary relative delooping level. Our first main result compares this invariant with the homological invariants of the endomorphism algebra. 

\medskip \noindent \textbf{Theorem A.}(\cref{thm:relative-main}) \textit{ Let $T$ be a basic support $\tau$-tilting $A$-module, $B=\operatorname{End}_{A}(T)$ and $\mathcal C=\operatorname{Fac}T$. Then, for every $k\geq 1$, \[ k\text{-}\operatorname{ddell}B \leq k\text{-}\operatorname{ddell}_{T}\mathcal C \leq k\text{-}\operatorname{dell}_{T}\mathcal C. \] In particular, $$ \operatorname{depth}_{T}\mathcal C \leq \operatorname{findim}B^{\mathrm{op}} \leq \operatorname{Findim}B^{\mathrm{op}} \leq \operatorname{ddell}B \leq \operatorname{ddell}_{T}\mathcal C \leq \operatorname{dell}_{T}\mathcal C.$$ 
} 

Thus the canonical relative derived invariant gives a refinement of the ordinary relative upper bound for the big finitistic dimension of $B^{\mathrm{op}}$. 

The quotient $ \overline A=A/\operatorname{ann}_{A}T $also plays a distinguished role.  We prove that the objectwise relative derived delooping levels are controlled, up to one, by this single object (see \cref{thm:DbarA-control}): \[ \operatorname{ddell}_{\mathcal C}(D\overline A) \leq \operatorname{gddell}(\mathcal C) \leq \operatorname{ddell}_{\mathcal C}(D\overline A)+1. \] 

 Let $\mathcal X$ be a resolving exact subcategory of an exact category $\mathcal A$ having the same projective objects. Assume that there exists $c\geq 0$ such that $\Omega_{\mathcal A}^{c}M\in\mathcal X $ for every $M\in\mathcal A$. We prove \[ k\text{-}\operatorname{ddell}_{\mathcal A}X \leq k\text{-}\operatorname{ddell}_{\mathcal X}X \leq c+(k+c)\text{-}\operatorname{ddell}_{\mathcal A}X \] for all $X\in\mathcal X$ and $k\geq 1$ (see \cref{thm:localization}). For $ \mathcal X=\operatorname{Sub}(DT) \subseteq\operatorname{mod}B, $ one may take $c=1$. Consequently, \[ k\text{-}\operatorname{ddell}_{B}X \leq k\text{-}\operatorname{ddell}_{\operatorname{Sub}(DT)}X \leq 1+(k+1)\text{-}\operatorname{ddell}_{B}X \] for every $X\in\operatorname{Sub}(DT)$ (see \cref{cor:SubDT-localization}). This result allows us to apply the relative theory to the problem of deciding when a $\tau$-tilting module is a classical tilting module. Let $T$ be a $\tau$-tilting $A$-module and $ I=\operatorname{ann}_{A}T$.   If $ m=k\text{-}\operatorname{ddell}_{\operatorname{Fac}T}(D\overline A) <\infty$  and $ \operatorname{Ext}_{A}^{j}(T,T)=0 ~ (2\leq j\leq m+k+1)$, then $ \operatorname{Tor}^{A}_{k}(T,I)=0. $ This implies that $T$ is a $1$-tilting $A$-module when  $k=1$ (see \cref{thm:finite-range-tor}). Combining this result, we have the following criterion in terms of the endomorphism algebra.
 
  \medskip \noindent \textbf{Theorem B.}(\cref{thm:ambient-tilting}) \textit{Let $T$ be a $\tau$-tilting $A$-module and $B=\operatorname{End}_{A}(T)$. If \[ d=2\text{-}\operatorname{ddell}_{B}(DT)<\infty \] and \[ \operatorname{Ext}_{A}^{j}(T,T)=0 \qquad (2\leq j\leq d+3), \] then $T$ is a $1$-tilting $A$-module. 
  	In particular, if $T$ is self-orthogonal and $2\text{-}\operatorname{ddell}_{B}(DT)<\infty, $, then $T$ is $1$-tilting. } 
  
  \medskip  The paper is organized as follows. \Cref{sec:preliminaries}  contains the necessary preliminaries. In \Cref{sec:calculus}, we develop derived delooping levels over exact categories and prove the transfer and Ext-detection results. \Cref{sec:relative}  is devoted to the relative derived theory associated with support $\tau$-tilting modules, including the finitistic-dimension comparison, the cogenerator estimate,  and the tilting criteria. \Cref{sec:examples} contains the examples.

\section{Preliminaries}\label{sec:preliminaries}

Throughout the paper, exact categories are assumed to be essentially small and idempotent complete. 
In the representation-theoretic part, $A$ is a finite-dimensional basic algebra over an algebraically closed field $K$, all modules are finitely generated right modules, and
\[
 D=\Hom_K(-,K)
\]
is the ordinary duality. The symbols $\add M$, $\Fac M$ and $\Sub M$ have their usual meanings. All occurrences of $D(A/I)$ mean the dual of the left regular $A/I$-module, viewed as a right module. Left-module formulations are obtained by passage to opposite algebras.

\begin{definition}\label{def:stable-category}
{\rm(\cite{Keller})}
Let $\E$ be an exact category and $\Pcat=\Proj\E$. Its projectively stable category is
\[
 \stable{\E}=\E/[\Pcat].
\]
For every $X\in\E$, fix a projective conflation
\[
 0\longrightarrow \Omega_{\E}X\longrightarrow P_X\longrightarrow X\longrightarrow0.
\]
The induced endofunctor of $\stable{\E}$ is denoted by $\Omega_{\E}$.
\end{definition}

\begin{lemma}\label{lem:schanuel}
Let
\[
 0\longrightarrow K\xrightarrow{i}P\xrightarrow{p}X\longrightarrow0,
 \qquad
 0\longrightarrow K'\xrightarrow{i'}P'\xrightarrow{p'}X\longrightarrow0
\]
be conflations with $P$ and $P'$ projective. Then
\[
 K\oplus P'\simeq K'\oplus P.
\]
Consequently, any two $n$-th syzygies of $X$ become isomorphic after adjoining projective direct summands.
\end{lemma}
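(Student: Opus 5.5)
The plan is the standard pullback argument for Schanuel's lemma, carried out inside the exact category $\E$ using only the axioms of an exact structure. First I form the pullback $E=P\times_X P'$ of the two deflations $p$ and $p'$. In an exact category, deflations are stable under pullback. Hence the projection $q\colon E\to P'$ is a deflation, and its kernel is identified with $K$ via the universal property. Concretely, the map $K\to E$ is induced by $(i,0)$, and the canonical identification of kernels of a map and of its pullback gives a conflation
\[
0\longrightarrow K\longrightarrow E\xrightarrow{q} P'\longrightarrow 0.
\]
Symmetrically, the projection $q'\colon E\to P$ is a deflation (as the pullback of $p'$ along $p$) with kernel $K'$, so we get a conflation
\[
0\longrightarrow K'\longrightarrow E\xrightarrow{q'} P\longrightarrow 0.
\]

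Next I use projectivity. Since $P'$ is projective and $q$ is a deflation, $q$ admits a section: lift $\mathrm{id}_{P'}$ through $q$. A conflation whose deflation has a section is split, so $E\simeq K\oplus P'$. This is a standard fact for exact categories, following from the splitting lemma in the additive setting; the kernel–cokernel pair property gives the complementary retraction. Likewise, $q'$ splits because $P$ is projective, so $E\simeq K'\oplus P$. Combining the two isomorphisms gives $K\oplus P'\simeq K'\oplus P$.

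The only step needing care is the identification of the kernel of the pulled-back deflation $q$ with $K$. I would verify it directly: a morphism $t\colon Y\to E$ with $qt=0$ corresponds to a pair $(a,0)$ with $pa=p'\circ 0=0$. Such $a$ factor uniquely through $i=\ker p$. So $\ker q$ is $K$ with the map $(i,0)$, and since $q$ is a deflation this kernel is its admissible kernel.

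For the consequence, I induct on $n$. Suppose two $(n-1)$-th syzygies $L, L'$ satisfy $L\oplus Q\simeq L'\oplus Q'$ with $Q,Q'$ projective. Take projective conflations $0\to\Omega L\to P_L\to L\to 0$ and $0\to\Omega L'\to P_{L'}\to L'\to0$. Adding the trivial conflations $0\to 0\to Q\to Q\to 0$ and $0\to0\to Q'\to Q'\to0$ yields two projective presentations of the same object, with kernels $\Omega L$ and $\Omega L'$. Direct sums of conflations are conflations, and sums of projectives are projective. Applying the first part then gives $\Omega L\oplus P_{L'}\oplus Q'\simeq\Omega L'\oplus P_L\oplus Q$, which is the required statement at level $n$.
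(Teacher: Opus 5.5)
Your proposal is correct and follows essentially the same route as the paper: form the pullback $P\times_X P'$, observe that the two projections are deflations with kernels $K$ and $K'$, split both using projectivity, and compare. For the higher assertion, your explicit induction (padding by the trivial conflations $0\to0\to Q\to Q\to0$ and transporting along $L\oplus Q\simeq L'\oplus Q'$) makes precise what the paper only sketches as ``repeating the argument at the preceding stages.''
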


\begin{proof}
Let $W=P\times_XP'$ and denote the projections by $q:W\to P$ and $q':W\to P'$. Since deflations are stable under pullback, the right-hand square in
\[
\begin{tikzcd}[column sep=3.2em,row sep=2.5em]
0\arrow[r]&K'\arrow[r,"u"]\arrow[d,equal]&W\arrow[r,"q"]\arrow[d,"q'"]&P\arrow[r]\arrow[d,"p"]&0\\
0\arrow[r]&K'\arrow[r,"i'"]&P'\arrow[r,"p'"]&X\arrow[r]&0
\end{tikzcd}
\]
is cartesian and the upper row is a conflation. The universal property of the pullback identifies $u$ with the unique morphism satisfying $q'u=i'$ and $qu=0$. Since $P$ is projective, the deflation $q$ admits a section; hence the upper row splits and
\begin{equation}\label{eq:schanuel-first-splitting}
 W\simeq K'\oplus P.
\end{equation}
Interchanging the two projective presentations gives a conflation
\[
 0\longrightarrow K\longrightarrow W\xrightarrow{q'}P'\longrightarrow0,
\]
which splits because $P'$ is projective. Thus
\begin{equation}\label{eq:schanuel-second-splitting}
 W\simeq K\oplus P'.
\end{equation}
Combining \eqref{eq:schanuel-first-splitting} and \eqref{eq:schanuel-second-splitting} proves the first assertion.

For the higher assertion, compare the terminal projective deflations in two $n$-step projective resolutions of $X$. The first assertion gives a stable isomorphism between their kernels. Repeating the argument at the preceding stages yields projective objects $Q,Q'$ such that
\[
 \Omega_{\boldsymbol P}^{n}X\oplus Q\simeq
 \Omega_{\boldsymbol P'}^{n}X\oplus Q'.
\]
Therefore the stable isomorphism class of $\Omega_{\E}^{n}X$ is independent of all choices.
\end{proof}

\begin{definition}\label{def:stable-retract}
An object $X\in\E$ is a \emph{stable retract} of $Y\in\E$ if there are morphisms
\[
 X\xrightarrow{u}Y\xrightarrow{v}X
\]
in $\stable{\E}$ with $vu=1_X$.
\end{definition}

\begin{definition}\label{def:dell}
Let $X\in\E$ and $k\geq1$. The $k$-delooping level of $X$ is
\begin{align*}
 k\text{-}\dell_{\E}X
 =\inf\bigl\{n\geq0\mid {}&
 \Omega_{\E}^{n}X\text{ is a stable retract of}\\
 &\Omega_{\E}^{n+k}Y\text{ for some }Y\in\E\bigr\}.
\end{align*}
For $k=1$ we write $\dell_{\E}X$.
\end{definition}

A finite sequence
\[
 0\longrightarrow C_t\longrightarrow C_{t-1}\longrightarrow\cdots
 \longrightarrow C_0\longrightarrow X\longrightarrow0
\]
is called \emph{admissibly exact} if there are objects $Z_0=X,Z_1,\ldots,Z_t=C_t$ and conflations
\[
 0\longrightarrow Z_{i+1}\longrightarrow C_i\longrightarrow Z_i\longrightarrow0
 \qquad(0\leq i<t).
\]
For $t=0$, the map $C_0\to X$ is required to be an isomorphism.

\begin{definition}\label{def:ddell}
Let $X\in\E$ and $k\geq1$. The $k$-derived delooping level of $X$ is
\begin{align*}
 k\text{-}\ddell_{\E}X
 =\inf\bigl\{m\geq0\mid {}&\text{there are }0\leq t\leq m\text{ and an admissibly exact sequence}\\
 &0\to C_t\to\cdots\to C_0\to X\to0,\\
 &(i+k)\text{-}\dell_{\E}C_i\leq m-i\text{ for }0\leq i\leq t\bigr\}.
\end{align*}
For $k=1$ we write $\ddell_{\E}X$.
\end{definition}

When $\E=\modu\Lambda$, this is the derived delooping level of Guo and Igusa \cite{GuoIgusa}. Throughout the paper, both ordinary and derived invariants use the normalization $n\geq0$ in \cref{def:dell}; in particular, projective objects have every ordinary $k$-delooping level equal to zero.

For an Artin algebra $\Lambda$ and $k\geq1$, set
\[
 k\text{-}\dell\Lambda
 =\sup\{k\text{-}\dell_{\Lambda}S\mid S\text{ simple in }\modu\Lambda\},
\]
\[
 k\text{-}\ddell\Lambda
 =\sup\{k\text{-}\ddell_{\Lambda}S\mid S\text{ simple in }\modu\Lambda\}.
\]
We also write
\[
 \findim\Lambda=\sup\{\pd_{\Lambda}M\mid M\in\modu\Lambda,
 \ \pd_{\Lambda}M<\infty\},
\]
\[
 \Findim\Lambda=\sup\{\pd_{\Lambda}M\mid M\in\Mod\Lambda,
 \ \pd_{\Lambda}M<\infty\}.
\]
For an object $X$ of an exact category, $\pd_{\E}X$ is the least $d$ such that a $d$-th syzygy is projective, and is $\infty$ if no such $d$ exists.

\begin{lemma}\label{lem:monotonicity}
	For every $M\in\E$ and $1\leq r\leq s$,
	\[
	r\text{-}\dell_{\E}M\leq s\text{-}\dell_{\E}M,
	\qquad
	r\text{-}\ddell_{\E}M\leq s\text{-}\ddell_{\E}M.
	\]
\end{lemma}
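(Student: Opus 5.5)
The plan is to prove the ordinary inequality directly and then obtain the derived one by reusing any witness for $s$ as a witness for $r$.

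\emph{Ordinary level.} We may assume $n=s\text{-}\dell_{\E}M<\infty$. Then $\Omega_{\E}^{n}M$ is a stable retract of $\Omega_{\E}^{n+s}Y$ for some $Y\in\E$. Put $Y'=\Omega_{\E}^{s-r}Y$, a fixed syzygy chosen via \cref{def:stable-category}. Then
\[
\Omega_{\E}^{n+r}Y'=\Omega_{\E}^{n+r}\Omega_{\E}^{s-r}Y\simeq\Omega_{\E}^{n+s}Y
\]
in $\stable{\E}$. The key point to check is that iterated syzygies compose up to stable isomorphism. This follows from \cref{lem:schanuel}: splicing a resolution of $Y$ of length $s-r$ with one of $Y'$ gives a resolution of $Y$ of length $n+s$. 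Hence its $(n+s)$-th syzygy is stably isomorphic to the chosen $\Omega_{\E}^{n+s}Y$.

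Stable retracts are invariant under stable isomorphism: compose $u$ and $v$ with the isomorphism and its inverse in $\stable{\E}$. So $\Omega_{\E}^{n}M$ is a stable retract of $\Omega_{\E}^{n+r}Y'$, and therefore $r\text{-}\dell_{\E}M\le n$. The only subtlety is that $\Omega_{\E}$ on objects is defined only up to projective summands. Throughout, retracts are read in $\stable{\E}$, where this ambiguity disappears.

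\emph{Derived level.} Suppose $m=s\text{-}\ddell_{\E}M<\infty$. Choose $t\le m$ and an admissibly exact sequence $0\to C_t\to\cdots\to C_0\to M\to0$ with $(i+s)\text{-}\dell_{\E}C_i\le m-i$ for each $i$. Since $i+r\le i+s$, the ordinary case gives $(i+r)\text{-}\dell_{\E}C_i\le(i+s)\text{-}\dell_{\E}C_i\le m-i$. The same sequence therefore witnesses $r\text{-}\ddell_{\E}M\le m$.

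I expect the main obstacle to be only the bookkeeping about syzygies being well defined in $\stable{\E}$, that is, the identification $\Omega^{a}\Omega^{b}\simeq\Omega^{a+b}$. This is exactly what \cref{lem:schanuel} provides.
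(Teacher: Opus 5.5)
Your proof is correct and follows the paper's argument: replace $Y$ by $\Omega_{\E}^{s-r}Y$ using the stable isomorphism $\Omega_{\E}^{n+s}Y\simeq\Omega_{\E}^{n+r}(\Omega_{\E}^{s-r}Y)$ in $\stable{\E}$. Then reuse any $s$-derived witness termwise via the ordinary inequality for $i+r\le i+s$; you also spell out the Schanuel bookkeeping that the paper leaves implicit.
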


\begin{proof}
	If $\Omega_{\E}^{n}M$ is a stable retract of $\Omega_{\E}^{n+s}N$, then
	\[
	\Omega_{\E}^{n+s}N\simeq
	\Omega_{\E}^{n+r}(\Omega_{\E}^{s-r}N)
	\]
	in $\stable{\E}$, proving the  inequality about delooping leveling.  Applying  to a deriveddelooping leveling, the second inequality can be obtained.
\end{proof}

\begin{proposition}\label{prop:basic-properties}
Let $X,Y\in\E$, let $P,Q\in\Proj\E$, and let $k\geq1$.
\begin{enumerate}
\item $k\text{-}\ddell_{\E}X\leq k\text{-}\dell_{\E}X$.
\item If $X\oplus P\simeq Y\oplus Q$, then
\[
 k\text{-}\dell_{\E}X=k\text{-}\dell_{\E}Y,
 \qquad
 k\text{-}\ddell_{\E}X=k\text{-}\ddell_{\E}Y.
\]
\item If $(k+1)\text{-}\ddell_{\E}X\leq m$, then
$k\text{-}\ddell_{\E}X\leq m$.
\end{enumerate}
\end{proposition}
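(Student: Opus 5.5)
Each part reduces directly to the definitions (\cref{def:dell}, \cref{def:ddell}) together with \cref{lem:schanuel} and \cref{lem:monotonicity}.

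For (1), suppose $k\text{-}\dell_{\E}X=n<\infty$; otherwise there is nothing to prove. Take the trivial admissibly exact sequence with $t=0$ and $C_0=X$, the map $C_0\to X$ being the identity. The single condition in \cref{def:ddell} for $i=0$ is $k\text{-}\dell_{\E}C_0\leq m$, and taking $m=n$ shows $k\text{-}\ddell_{\E}X\leq n$.

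For (2), first note that for every $n\geq0$ the syzygies $\Omega^n_{\E}(X\oplus P)$ and $\Omega^n_{\E}X$ agree up to projective summands. This holds because a projective conflation for $X$ plus the split conflation $0\to0\to P\to P\to0$ gives one for $X\oplus P$; by \cref{lem:schanuel} the choice does not matter. So $X$ and $X\oplus P$ are isomorphic in $\stable{\E}$, and likewise all their syzygies. The stable-retract condition in \cref{def:dell} only depends on $\Omega^n_{\E}X$ up to isomorphism in $\stable{\E}$, so $k\text{-}\dell_{\E}X=k\text{-}\dell_{\E}(X\oplus P)=k\text{-}\dell_{\E}(Y\oplus Q)=k\text{-}\dell_{\E}Y$.

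For the derived level it suffices, by symmetry, to show $k\text{-}\ddell_{\E}(X\oplus P)\leq k\text{-}\ddell_{\E}X$ and conversely.
\begin{itemize}
\item Given an admissibly exact sequence $0\to C_t\to\cdots\to C_0\to X\to0$ witnessing $m$, replace $C_0$ by $C_0\oplus P$ and take the direct sum of the conflation $0\to Z_1\to C_0\to X\to0$ with the split conflation for $P$. Direct sums of conflations are conflations. By the $\dell$ case, $k\text{-}\dell_{\E}(C_0\oplus P)=k\text{-}\dell_{\E}C_0$, so the same $m$ works.
\item Conversely, given a witness for $X\oplus P$, compose the deflation $C_0\to X\oplus P$ with the split deflation $X\oplus P\to X$. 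Its kernel $Z_1'$ sits in a conflation $0\to Z_1\to Z_1'\to P\to0$ (obtained from the $3\times3$/Noether lemma in exact categories). Since $P$ is projective this splits, so $Z_1'\simeq Z_1\oplus P$.
\item We now must modify the tail. Replace $C_1$ by $C_1\oplus P$, giving $0\to Z_2\to C_1\oplus P\to Z_1\oplus P\to0$, and keep the rest. The index $i=1$ condition is preserved by the $\dell$ case.
\end{itemize}
The bookkeeping for the case $t=0$ is the main fiddly point. There $C_0\simeq X\oplus P$, so we may simply use $t=0$ with $C_0'=X$, since $k\text{-}\dell_{\E}X=k\text{-}\dell_{\E}C_0$. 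Checking that composites of deflations are deflations and that the kernel conflation exists uses the standard exact-category axioms, and this is the step I expect to require the most care.

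For (3), take a witness for $(k+1)\text{-}\ddell_{\E}X\leq m$: a sequence with $(i+k+1)\text{-}\dell_{\E}C_i\leq m-i$ for all $i$. By \cref{lem:monotonicity} with $r=i+k\leq s=i+k+1$, we get $(i+k)\text{-}\dell_{\E}C_i\leq m-i$. Hence the same sequence witnesses $k\text{-}\ddell_{\E}X\leq m$.
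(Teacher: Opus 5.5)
Your proposal is correct. Part (3) is argued exactly as in the paper: you apply monotonicity of $\dell$ termwise, which is the content of \cref{lem:monotonicity}.

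For (1) your route is simpler than the paper's. The paper uses the length-zero sequence only when $n=0$. For $n>0$ it takes the truncated resolution $0\to\Omega_{\E}^{n}X\to P_{n-1}\to\cdots\to P_0\to X\to0$, where the projective terms have level $0$ and $(n+k)\text{-}\dell_{\E}(\Omega_{\E}^{n}X)=0$ merely restates $k\text{-}\dell_{\E}X=n$. You observe that the length-zero sequence with $C_0=X$ already satisfies the single condition $k\text{-}\dell_{\E}C_0\leq n$ for every $n$, so no case split is needed.

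For (2) the paper only says that delooping level is stable by definition and that the stable isomorphism gives equality. That is adequate for $\dell$. For $\ddell$, however, the witnessing sequence ends at $X$ itself rather than at its stable class, so an argument is actually required. Your two-sided modification of witnesses supplies exactly this missing justification:
\begin{itemize}
\item In one direction you add the split conflation $0\to0\to P\to P\to0$ in degree $0$.
\item In the other you compose with the split deflation $X\oplus P\to X$, split the resulting conflation $0\to Z_1\to Z_1'\to P\to0$, and absorb $P$ into $C_1$. This gives $C_1'=Z_1'\simeq C_1\oplus P$ when $t=1$, and you take $C_0'=X$ when $t=0$.
\end{itemize}
The only cost is routine exact-category bookkeeping. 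You need that composites of deflations are deflations. You also need that the kernel $Z_1'$ of $C_0\to X$ is the pullback of $C_0\twoheadrightarrow X\oplus P$ along $P\rightarrowtail X\oplus P$, which gives the conflation with kernel $Z_1$ directly.
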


\begin{proof}
(1) Let $n=k\text{-}\dell_{\E}X<\infty$. If $n=0$, then the admissibly exact sequence with $t=0$ and $C_0=X$. So the result is trivial. If $n>0$, truncate a projective resolution after $n$ steps:
\[
 0\longrightarrow\Omega_{\E}^{n}X\longrightarrow P_{n-1}
 \longrightarrow\cdots\longrightarrow P_0\longrightarrow X\longrightarrow0.
\]
The projective terms have  delooping level zero. Moreover,
\[
 (n+k)\text{-}\dell_{\E}(\Omega_{\E}^{n}X)=0
\]
by the definition. Hence  $k\text{-}\dell_{\E}X\leq
n$.

(2) Delooping level is stable by definition.  The stable isomorphism in the statement then gives equality.

(3) follows from the monotonicity established in \cref{lem:monotonicity}.
\end{proof}

\begin{lemma}\label{lem:syzygy-shift}
Let
\[
 \xi:\quad 0\longrightarrow X\xrightarrow{x}Y\xrightarrow{y}Z\longrightarrow0
\]
be a conflation, and let
\[
 0\longrightarrow\Omega_{\E}Z\xrightarrow{j}P_Z\xrightarrow{\pi}Z\longrightarrow0
\]
be a projective conflation. Then there is a conflation
\[
 0\longrightarrow\Omega_{\E}Z\longrightarrow X\oplus P_Z
 \longrightarrow Y\longrightarrow0.
\]
\end{lemma}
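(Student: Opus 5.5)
The plan is to obtain the conflation as the first row of a pullback of $y$ along $\pi$, by the same technique used in the proof of \cref{lem:schanuel}. Form the pullback $W=Y\times_Z P_Z$ with projections $q:W\to Y$ and $q':W\to P_Z$.

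Since deflations are stable under pullback and $y$ is a deflation, $q'$ is a deflation with kernel $X$. This gives a commutative diagram of conflations
\[
0\to X\to W\xrightarrow{q'}P_Z\to0
\qquad\text{over}\qquad
0\to X\xrightarrow{x}Y\xrightarrow{y}Z\to0 .
\]
Symmetrically, since $\pi$ is a deflation, $q$ is a deflation with kernel $\Omega_{\E}Z$. This gives a conflation
\[
0\longrightarrow\Omega_{\E}Z\longrightarrow W\xrightarrow{q}Y\longrightarrow0,
\]
mapping to $0\to\Omega_{\E}Z\xrightarrow{j}P_Z\xrightarrow{\pi}Z\to0$. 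Both kernel identifications are the standard fact that a pullback square of a deflation induces an isomorphism on kernels. In an exact category this holds, for example, by \cite{Keller} or Bühler's Proposition 2.12.

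Next, because $P_Z$ is projective, the deflation $q'$ has a section. Hence the first conflation splits, and $W\simeq X\oplus P_Z$. Substituting this isomorphism into the second conflation yields the required conflation
\[
0\longrightarrow\Omega_{\E}Z\longrightarrow X\oplus P_Z\longrightarrow Y\longrightarrow0 .
\]
Conflations are closed under isomorphism of middle terms, so this substitution is legitimate. Concretely, the maps in the final conflation are the inflation $\binom{\ast}{j}$ and the deflation $(x,\,s)$, where $s=q\sigma$ for a section $\sigma$ of $q'$. In particular $y s=\pi$.

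The only point needing care is the kernel identification for the pullbacks, since it uses the exact-category axioms rather than elementwise reasoning. It is standard, and it is exactly what is invoked in \cref{lem:schanuel}, so no real obstacle is expected.
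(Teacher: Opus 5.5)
Your proposal is correct and follows essentially the same route as the paper's proof. You form the pullback $W=Y\times_Z P_Z$, split the conflation $0\to X\to W\to P_Z\to0$ using projectivity of $P_Z$, and transport the conflation $0\to\Omega_{\E}Z\to W\to Y\to0$, obtained from pulling back $\pi$, along the isomorphism $W\simeq X\oplus P_Z$.
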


\begin{proof}
 By stability of deflations under pullback, there is a commutative diagram
\[
\begin{tikzcd}[column sep=3.2em,row sep=2.5em]
0\arrow[r]&X\arrow[r,"\widetilde x"]\arrow[d,equal]&W\arrow[r,"q"]\arrow[d,"p"]&P_Z\arrow[r]\arrow[d,"\pi"]&0\\
0\arrow[r]&X\arrow[r,"x"]&Y\arrow[r,"y"]&Z\arrow[r]&0.
\end{tikzcd}
\]
The upper row is a conflation. Since $P_Z$ is projective, $q$ has a section and there is an isomorphism
\begin{equation}\label{eq:pullback-splitting}
 \theta:X\oplus P_Z\xrightarrow{\sim}W
\end{equation}
whose restriction to $X$ is $\widetilde x$.

The equality $yp=\pi q$ and the relation $\pi j=0$ yield, by the pullback universal property, a unique morphism $\widetilde j:\Omega_{\E}Z\to W$ satisfying
$q\widetilde j=j$ and $p\widetilde j=0$. The pullback axiom applied to $\pi$ also shows that
\[
 0\longrightarrow\Omega_{\E}Z\xrightarrow{\widetilde j}W
 \xrightarrow{p}Y\longrightarrow0
\]
is a conflation. Transporting it along \eqref{eq:pullback-splitting} gives the required conflation.
\end{proof}

\begin{lemma}\label{lem:ddell-syzygy-step}
For every $M\in\E$ and $k\geq1$,
\[
 k\text{-}\ddell_{\E}M
 \leq 1+(k+1)\text{-}\ddell_{\E}(\Omega_{\E}M).
\]
\end{lemma}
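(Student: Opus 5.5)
Set $m=(k+1)\text{-}\ddell_{\E}(\Omega_{\E}M)$; if $m=\infty$ there is nothing to prove, so assume $m<\infty$. The plan is to take an admissibly exact sequence witnessing this value for $\Omega_{\E}M$ and splice it onto the projective conflation $0\to\Omega_{\E}M\to P_M\to M\to0$. This gives an admissibly exact sequence for $M$ of length one more, and we then check the delooping conditions of \cref{def:ddell} with the index shifted by one.

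Concretely, choose $0\leq t\leq m$ and an admissibly exact sequence
\[
 0\to C_t\to\cdots\to C_0\to\Omega_{\E}M\to0
\]
with $(i+k+1)\text{-}\dell_{\E}C_i\leq m-i$ for $0\leq i\leq t$, together with conflations $0\to Z_{i+1}\to C_i\to Z_i\to0$, where $Z_0=\Omega_{\E}M$. Put $C'_0=P_M$ and $C'_{i+1}=C_i$ for $0\le i\le t$, and set $Z'_0=M$ and $Z'_{i+1}=Z_i$. The conflations $0\to Z'_{j+1}\to C'_j\to Z'_j\to 0$ are then the projective conflation for $j=0$ and the old conflations for $j\geq1$. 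Hence
\[
 0\to C'_{t+1}\to\cdots\to C'_0\to M\to0
\]
is admissibly exact of length $t+1\leq m+1$.

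One point needs care. If $t=0$, then $C_0\to\Omega_{\E}M$ is an isomorphism, so $Z_1$ is not literally defined. Since the new sequence still ends at $C'_1=C_0$, we simply take $Z'_1=\Omega_{\E}M\simeq C_0$, which is consistent with the definition.

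Now verify the bounds with $m'=m+1$. For $j=0$, the term $C'_0=P_M$ is projective, so $k\text{-}\dell_{\E}P_M=0\leq m+1$. For $j=i+1\geq1$ we need
\[
 (j+k)\text{-}\dell_{\E}C'_j=(i+1+k)\text{-}\dell_{\E}C_i\leq m-i=m'-j,
\]
and this is exactly the hypothesis. Therefore $k\text{-}\ddell_{\E}M\leq m+1$.

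The main obstacle is only bookkeeping. The choice of $\Omega_{\E}M$ is defined only up to projective summands, but \cref{prop:basic-properties}(2) makes $\ddell$ independent of that choice, so we may use the syzygy from the fixed projective conflation. The index shift $(i+k+1)=(j+k)$ is what makes the $(k+1)$ on the right-hand side of the statement match.
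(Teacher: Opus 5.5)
Your proposal is correct and is essentially the paper's proof: you splice the witness for $\Omega_{\E}M$ onto the projective conflation $0\to\Omega_{\E}M\to P_M\to M\to0$, which puts $P_M$ in position zero and shifts $C_i$ to position $i+1$, and the index shift $(i+1)+k=i+(k+1)$ gives the required bounds with value $m+1$. Your explicit handling of the cycle objects, including the degenerate case $t=0$ via $C_0\simeq\Omega_{\E}M$, fills in bookkeeping that the paper leaves implicit.
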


\begin{proof}
Put $m=(k+1)\text{-}\ddell_{\E}(\Omega_{\E}M)<\infty$ and choose an admissibly exact sequence
\[
 0\to C_t\to\cdots\to C_0\to\Omega_{\E}M\to0,
 \qquad t\leq m,
\]
with $(i+k+1)\text{-}\dell_{\E}C_i\leq m-i$. Splice it with
\[
 0\to\Omega_{\E}M\to P_M\to M\to0.
\]
The resulting admissibly exact sequence has $P_M$ in position zero and $C_i$ in position $i+1$, and is a $k$-derived delooping level of value $m+1$.
\end{proof}

\section{Derived delooping level of exact categories}\label{sec:calculus}

The module-theoretic about  derived delooping levels is due to Guo and Igusa \cite[Lemma~3.1]{GuoIgusa}. Considering the derived delooping levels over exact categories, the point here is that every construction is carried out inside an arbitrary exact structure. In particular, successive syzygy terms must be made compatible by admissible conflations rather than by appealing to   kernels in an abelian category.

\begin{definition}\label{def:admissibly-torsion-free}
A full subcategory $\mathcal F\subseteq\E$ is called \emph{admissibly torsion-free} if it is closed under isomorphisms, direct summands, extensions and admissible subobjects.
\end{definition}

If $\E$ is a length abelian category with its standard exact structure, this is the usual finite-length notion of a torsion-free class.

\begin{lemma}\label{lem:lift-witness}
Let
\begin{equation*}\label{eq:admissible-sequence}
 0\longrightarrow C_t\longrightarrow C_{t-1}\longrightarrow\cdots
 \longrightarrow C_0\longrightarrow X\longrightarrow0
\end{equation*}
be admissibly exact in $\E$, and let $s\geq0$. There is an admissibly exact sequence
\begin{equation}\label{eq:lifted-sequence}
 0\longrightarrow C_t^{[s]}\longrightarrow C_{t-1}^{[s]}\longrightarrow\cdots
 \longrightarrow C_0^{[s]}\longrightarrow\Omega_{\E}^{s}X\longrightarrow0
\end{equation}
such that
\begin{equation}\label{eq:stable-terms}
 C_i^{[s]}\simeq\Omega_{\E}^{s}C_i
 \qquad\text{in }\stable{\E}
\end{equation}
for $0\leq i\leq t$.
\end{lemma}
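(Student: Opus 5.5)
We argue by induction on $s$. The case $s=0$ is trivial: take $C_i^{[0]}=C_i$. By iteration it therefore suffices to treat $s=1$. So we want, from an admissibly exact sequence with intermediate objects $Z_0=X,\dots,Z_t=C_t$ and conflations $0\to Z_{i+1}\to C_i\to Z_i\to0$, a new admissibly exact sequence resolving $\Omega_{\E}X$ whose terms are stably isomorphic to $\Omega_{\E}C_i$.

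The key tool is a horseshoe-type construction done with exact-category axioms. We work backwards from the top, or equivalently apply the following basic step to each conflation $0\to Z_{i+1}\to C_i\to Z_i\to0$. Choose projective conflations $0\to\Omega Z_{i+1}\to P_{Z_{i+1}}\to Z_{i+1}\to0$ and $0\to\Omega Z_i\to P_{Z_i}\to Z_i\to 0$. Since $P_{Z_i}$ is projective, the deflation $C_i\to Z_i$ lets us lift $P_{Z_i}\to Z_i$ to $C_i$. Hence the map $P_{Z_{i+1}}\oplus P_{Z_i}\to C_i$ is a deflation; this is the standard fact that a morphism whose composite with a deflation is a deflation, and which restricts on a summand to the inflated kernel via a deflation, is itself a deflation. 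We expect to verify this via the $3\times3$ lemma or the obscure axiom, which holds because $\E$ is idempotent complete. Its kernel $K_i$ is a first syzygy of $C_i$, so $K_i\simeq\Omega_{\E}C_i$ stably by \cref{lem:schanuel}. The $3\times3$ lemma for exact categories (Bühler) then yields a conflation $0\to\Omega Z_{i+1}\to K_i\to\Omega Z_i\to0$.

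To make these conflations splice, we must use the \emph{same} chosen syzygy $\Omega Z_{i+1}$ at both stages in which it appears. This is automatic if we fix, once and for all, a projective conflation for each $Z_j$ and use it in both horseshoes involving $Z_j$. We then get conflations $0\to\Omega Z_{i+1}\to K_i\to\Omega Z_i\to0$ for $0\le i<t$. At the top we have $Z_t=C_t$, so we set $C_t^{[1]}=\Omega Z_t$, which is a syzygy of $C_t$. Setting $C_i^{[1]}=K_i$ and taking intermediate objects $\Omega Z_i$ (with $\Omega Z_0=\Omega_{\E}X$) gives the required admissibly exact sequence. The case $t=0$ is immediate, since $C_0\simeq X$.

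The main obstacle is the horseshoe step in a general exact category: showing that $P_{Z_{i+1}}\oplus P_{Z_i}\to C_i$ is a deflation and that the induced kernel sequence is a conflation. We would first try to prove it directly. Form the pullback of $C_i\to Z_i$ along $P_{Z_i}\to Z_i$, as in \cref{lem:syzygy-shift}; this pullback splits as $Z_{i+1}\oplus P_{Z_i}$. Composing with the deflation $P_{Z_{i+1}}\oplus P_{Z_i}\to Z_{i+1}\oplus P_{Z_i}$ and using that deflations compose gives the deflation property. \cref{lem:syzygy-shift} also gives a conflation $0\to\Omega Z_i\to Z_{i+1}\oplus P_{Z_i}\to C_i\to0$. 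Composing the two deflations and applying the Noether-type isomorphism, in its exact-category form via the $3\times3$ lemma, identifies the kernel as an extension of $\Omega Z_i$ by $\Omega Z_{i+1}$. Finally, stable uniqueness of the terms follows from \cref{lem:schanuel}, and iterating $s$ times gives \eqref{eq:stable-terms}, since $\Omega_{\E}$ is a functor on $\stable{\E}$.
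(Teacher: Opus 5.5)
Your proposal is correct and follows the paper's proof. You apply a horseshoe construction to each conflation $0\to Z_{i+1}\to C_i\to Z_i\to 0$, using one fixed projective presentation per cycle object so that the resulting conflations $0\to\Omega Z_{i+1}\to K_i\to\Omega Z_i\to 0$ splice, identify $K_i$ with $\Omega_{\E}C_i$ stably via \cref{lem:schanuel}, and induct on $s$. The only difference is that the paper cites B\"uhler's Horseshoe Lemma, while you verify the horseshoe step directly by the pullback-and-composite-deflation argument; that argument needs no idempotent completeness, contrary to your aside about the obscure axiom.
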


\begin{proof}
The statement is trivial for $s=0$ or $t=0$.    Hence assume $s,t\geq1$.

Choose objects $Z_0=X,Z_1,\ldots,Z_t=C_t$ and conflations
\begin{equation*}\label{eq:cycles}
 \xi_i:\quad
 0\longrightarrow Z_{i+1}\xrightarrow{a_i}C_i
 \xrightarrow{b_i}Z_i\longrightarrow0
 \qquad(0\leq i<t).
\end{equation*}
We first construct \eqref{eq:lifted-sequence} for $s=1$. For every $i$, fix a projective conflation
\[
 0\longrightarrow K_i\xrightarrow{u_i}P_i\xrightarrow{v_i}Z_i\longrightarrow0,
 \qquad K_i=\Omega_{\E}Z_i.
\]
Apply the exact-category Horseshoe Lemma \cite[Theorem~12.8]{Buhler} to $\xi_i$ and to the fixed projective presentations of its end terms. It yields a commutative diagram with conflation rows and columns
\begin{equation}\label{eq:horseshoe-diagram}
\begin{tikzcd}[column sep=2.8em,row sep=2.4em]
0\arrow[r]&K_{i+1}\arrow[r]\arrow[d,"u_{i+1}"]&H_i\arrow[r]\arrow[d]&K_i\arrow[r]\arrow[d,"u_i"]&0\\
0\arrow[r]&P_{i+1}\arrow[r]\arrow[d,"v_{i+1}"]&P_{i+1}\oplus P_i\arrow[r]\arrow[d]&P_i\arrow[r]\arrow[d,"v_i"]&0\\
0\arrow[r]&Z_{i+1}\arrow[r,"a_i"]&C_i\arrow[r,"b_i"]&Z_i\arrow[r]&0.
\end{tikzcd}
\end{equation}
The middle column is a projective deflation onto $C_i$ and has kernel $H_i$. By \cref{lem:schanuel},
\begin{equation*}\label{eq:horseshoe-term-stable}
 H_i\simeq\Omega_{\E}C_i
 \qquad\text{in }\stable{\E}.
\end{equation*}
Set $C_i^{[1]}=H_i$ for $0\leq i<t$ and $C_t^{[1]}=K_t=\Omega_{\E}C_t$. The upper rows in \eqref{eq:horseshoe-diagram} have common cycle objects: the right-hand term for $i+1$ and the left-hand term for $i$ are both the fixed object $K_{i+1}$. They therefore splice to
\[
 0\longrightarrow C_t^{[1]}\longrightarrow C_{t-1}^{[1]}
 \longrightarrow\cdots\longrightarrow C_0^{[1]}
 \longrightarrow K_0=\Omega_{\E}X\longrightarrow0.
\]
This sequence is admissibly exact and satisfies \eqref{eq:stable-terms} for $s=1$.

Assume inductively that \eqref{eq:lifted-sequence} has been constructed for $s$. Apply the case $s=1$ to that sequence, keeping one fixed projective presentation for each of its cycle objects. The resulting term at position $i$ is stably isomorphic to $\Omega_{\E}C_i^{[s]}$. Applying the syzygy functor to \eqref{eq:stable-terms} and using \cref{lem:schanuel} gives
\[
 \Omega_{\E}C_i^{[s]}
 \simeq\Omega_{\E}^{s+1}C_i
 \qquad\text{in }\stable{\E}.
\]
This proves the assertion by induction on $s$.
\end{proof}

\begin{lemma}\label{lem:splice}
Let
\begin{equation*}\label{eq:conflation-splice}
 0\longrightarrow X\longrightarrow Y\longrightarrow Z\longrightarrow0
\end{equation*}
be a conflation, and let
\begin{equation}\label{eq:witness-X}
 0\longrightarrow D_p\longrightarrow D_{p-1}\longrightarrow\cdots
 \longrightarrow D_0\longrightarrow X\longrightarrow0
\end{equation}
be admissibly exact. If
\begin{equation}\label{eq:resolution-Z}
 0\longrightarrow\Omega_{\E}^{p+1}Z\longrightarrow P_p\longrightarrow\cdots
 \longrightarrow P_0\longrightarrow Z\longrightarrow0
\end{equation}
is  a projective resolution, then there is an admissibly exact sequence
\begin{equation}\label{eq:spliced-Y}
 0\longrightarrow\Omega_{\E}^{p+1}Z\longrightarrow D_p\oplus P_p
 \longrightarrow\cdots\longrightarrow D_0\oplus P_0
 \longrightarrow Y\longrightarrow0.
\end{equation}
\end{lemma}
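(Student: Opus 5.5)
This is an exact-category horseshoe argument, carried out by induction on $p$ and repeatedly using \cref{lem:syzygy-shift}. Write $Z_0=X,Z_1,\ldots,Z_p=D_p$ for the cycle objects of \eqref{eq:witness-X}, with conflations $0\to Z_{i+1}\to D_i\to Z_i\to0$. The resolution \eqref{eq:resolution-Z} has cycles $\Omega^{i}_{\E}Z$, with conflations $0\to\Omega_{\E}^{i+1}Z\to P_i\to\Omega_{\E}^iZ\to0$. The aim is to produce objects $Y_0=Y,Y_1,\ldots,Y_{p+1}$ with $Y_{p+1}=\Omega_{\E}^{p+1}Z$, together with conflations
\[
0\longrightarrow Y_{i+1}\longrightarrow D_i\oplus P_i\longrightarrow Y_i\longrightarrow0 ,
\]
and, as an inductive invariant, conflations $0\to Z_i\to Y_i\to\Omega^{i}_{\E}Z\to0$ for $0\le i\le p$.

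For $i=0$ the invariant conflation is the given one, $0\to X\to Y\to Z\to0$. For the inductive step, suppose we have $0\to Z_i\to Y_i\to\Omega^i_{\E}Z\to0$. Form the pullback of the deflation $P_i\to\Omega_{\E}^iZ$ along $Y_i\to\Omega_{\E}^iZ$; call it $W_i$. As in the proof of \cref{lem:syzygy-shift}, this gives two conflations:
\[
0\to Z_i\to W_i\to P_i\to0,
\qquad
0\to\Omega_{\E}^{i+1}Z\to W_i\to Y_i\to0.
\]
The first splits because $P_i$ is projective, so $W_i\simeq Z_i\oplus P_i$. Next, the direct sum of the conflation $0\to Z_{i+1}\to D_i\to Z_i\to0$ with the split conflation $0\to0\to P_i\to P_i\to0$ shows that $D_i\oplus P_i\to Z_i\oplus P_i\simeq W_i$ is a deflation with kernel $Z_{i+1}$. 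Composing deflations gives a deflation $D_i\oplus P_i\to W_i\to Y_i$, and we let $Y_{i+1}$ be its kernel.

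To restore the invariant, apply the exact-category $3\times3$ (Noether) lemma to this composite of deflations, as in \cite{Buhler}. It yields a conflation
\[
0\to Z_{i+1}\to Y_{i+1}\to\Omega^{i+1}_{\E}Z\to0,
\]
where $Z_{i+1}$ is the kernel of $D_i\oplus P_i\to W_i$ and $\Omega^{i+1}_{\E}Z$ is the kernel of $W_i\to Y_i$. This is precisely the invariant at stage $i+1$.

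At the last stage $i=p$ we have $Z_{p+1}=0$, since $D_p=Z_p$. The deflation $D_p\oplus P_p\to W_p$ is then an isomorphism, so $Y_{p+1}\simeq\Omega_{\E}^{p+1}Z$. Splicing the conflations $0\to Y_{i+1}\to D_i\oplus P_i\to Y_i\to0$ for $0\le i\le p$ gives exactly the admissibly exact sequence \eqref{eq:spliced-Y}.

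The main obstacle is the exact-category bookkeeping. We must check that the composite $D_i\oplus P_i\to Y_i$ is a deflation, which follows from the axiom that deflations are closed under composition. We must also justify the conflation between the kernels, which is where the Noether-type lemma for a composite of deflations enters, Bühler's Lemma 3.5. I would first try to obtain this kernel conflation by a direct pullback argument: pull back $0\to Z_{i+1}\to D_i\oplus P_i\to W_i\to0$ along the inflation $\Omega^{i+1}_{\E}Z\to W_i$.
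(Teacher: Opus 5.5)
Your proposal is correct and follows the paper's proof essentially step for step: the same pullback $W_i$ of $P_i\twoheadrightarrow\Omega_{\E}^iZ$ along $Y_i\twoheadrightarrow\Omega_{\E}^iZ$, the same splitting $W_i\simeq Z_i\oplus P_i$, the same composite deflation $D_i\oplus P_i\to W_i\to Y_i$ with kernel $Y_{i+1}$, the same $3\times3$-type kernel conflation restoring the invariant, and the same trivial conflation $0\to0\to D_p\to D_p\to0$ at the last stage. Your suggested direct route to the kernel conflation is valid and can replace the citation to B\"uhler: pull back the deflation $D_i\oplus P_i\twoheadrightarrow W_i$ along $\Omega_{\E}^{i+1}Z\rightarrowtail W_i$, and note that this pullback is exactly $\Ker(D_i\oplus P_i\to Y_i)=Y_{i+1}$.
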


\begin{proof}
Choose cycle objects $X_0=X,X_1,\ldots,X_p=D_p$ for \eqref{eq:witness-X}; thus
\begin{equation*}\label{eq:X-cycle-conflations}
 0\longrightarrow X_{i+1}\xrightarrow{a_i}D_i
 \xrightarrow{d_i}X_i\longrightarrow0
 \qquad(0\leq i<p).
\end{equation*}
Put $X_{p+1}=0$ and append the split conflation
$0\to0\to D_p\xrightarrow{1}X_p\to0$. Write $Z_i=\Omega_{\E}^{i}Z$ and decompose \eqref{eq:resolution-Z} as
\begin{equation*}\label{eq:Z-cycle-conflations}
 0\longrightarrow Z_{i+1}\xrightarrow{u_i}P_i
 \xrightarrow{v_i}Z_i\longrightarrow0
 \qquad(0\leq i\leq p).
\end{equation*}
We construct, by induction on $i$, conflations
\begin{equation*}\label{eq:Yi}
 \eta_i:\quad0\longrightarrow X_i\longrightarrow Y_i
 \xrightarrow{y_i}Z_i\longrightarrow0
 \qquad(0\leq i\leq p+1)
\end{equation*}
with $Y_0=Y$, together with deflations
\begin{equation*}\label{eq:DiPi-deflation}
 h_i:D_i\oplus P_i\twoheadrightarrow Y_i
 \quad\text{and}\quad\Ker h_i=Y_{i+1}.
\end{equation*}

Assume that $\eta_i$ has been constructed.
Considering the pullback diagram with conflation rows:
\begin{equation*}\label{eq:splice-pullback}
\begin{tikzcd}[column sep=3em,row sep=2.4em]
0\arrow[r]&X_i\arrow[r]\arrow[d,equal]&W_i\arrow[r,"q_i"]\arrow[d,"g_i"]&P_i\arrow[r]\arrow[d,"v_i"]&0\\
0\arrow[r]&X_i\arrow[r]&Y_i\arrow[r,"y_i"]&Z_i\arrow[r]&0.
\end{tikzcd}
\end{equation*}
 The upper row splits because $P_i$ is projective. Fix an isomorphism
$\sigma_i:X_i\oplus P_i\xrightarrow{\sim}W_i$, let
\[
 f_i=\sigma_i(d_i\oplus1_{P_i}):D_i\oplus P_i\twoheadrightarrow W_i.
\]
The direct sum $d_i\oplus1_{P_i}$ is a deflation with kernel $X_{i+1}$; hence so is $f_i$. The morphism $g_i$ is a deflation with kernel $Z_{i+1}$ by the pullback property. Thus the composite
$h_i=g_if_i$ is a deflation. Define $Y_{i+1}=\Ker h_i$.

The inclusions of the three kernels are compatible with $f_i$ and $g_i$. The universal property of $\Ker h_i$ gives a unique morphism $X_{i+1}\to Y_{i+1}$, and the universal property of $\Ker g_i$ gives a unique morphism $Y_{i+1}\to Z_{i+1}$. The $3\times3$ Lemma \cite[Lemma~3.6]{Buhler}, applied to the kernel diagram of the composable deflations $f_i$ and $g_i$, yields the conflation
\begin{equation}\label{eq:kernel-composite-conflation}
 0\longrightarrow X_{i+1}\longrightarrow Y_{i+1}
 \longrightarrow Z_{i+1}\longrightarrow0.
\end{equation}
This is $\eta_{i+1}$ and completes the induction.

Note that $X_{p+1}=0$, we have
$Y_{p+1}\simeq Z_{p+1}=\Omega_{\E}^{p+1}Z$. Finally, we obtain \eqref{eq:spliced-Y}.
\end{proof}

\begin{theorem}\label{thm:extension-estimate}
Let
\[
 0\longrightarrow X\longrightarrow Y\longrightarrow Z\longrightarrow0
\]
be a conflation in $\E$. Then, for every $k\geq1$,
\begin{equation}\label{eq:extension-estimate}
 k\text{-}\ddell_{\E}Y
 \leq k\text{-}\ddell_{\E}X+k\text{-}\ddell_{\E}Z+1.
\end{equation}
\end{theorem}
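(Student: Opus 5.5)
We may assume $a=k\text{-}\ddell_{\E}X$ and $b=k\text{-}\ddell_{\E}Z$ are both finite, and put $m=a+b+1$. The plan is to splice a witness for $X$ with a syzygy-shifted witness for $Z$, which is the same strategy as Guo--Igusa's module argument, but carried out with \cref{lem:splice} and \cref{lem:lift-witness}.

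First, fix the two witnesses. For $X$, take an admissibly exact sequence $0\to D_p\to\cdots\to D_0\to X\to0$ with $p\leq a$ and $(i+k)\text{-}\dell_{\E}D_i\leq a-i$. For $Z$, take $0\to E_q\to\cdots\to E_0\to Z\to0$ with $q\leq b$ and $(j+k)\text{-}\dell_{\E}E_j\leq b-j$.

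Next, choose a projective resolution of $Z$ up to stage $p$, as in \eqref{eq:resolution-Z}, and apply \cref{lem:splice}. This gives an admissibly exact sequence for $Y$ whose terms in positions $0,\dots,p$ are $D_i\oplus P_i$. Its last cycle object is $\Omega_{\E}^{p+1}Z$, sitting in position $p+1$. By \cref{prop:basic-properties}(2), we have $(i+k)\text{-}\dell_{\E}(D_i\oplus P_i)\leq a-i\leq m-i$, so these positions already meet the requirement for value $m$.

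Then apply \cref{lem:lift-witness} with $s=p+1$ to the witness for $Z$. This yields an admissibly exact sequence $0\to E_q^{[p+1]}\to\cdots\to E_0^{[p+1]}\to\Omega_{\E}^{p+1}Z\to0$ with $E_j^{[p+1]}\simeq\Omega_{\E}^{p+1}E_j$ in $\stable{\E}$. The lifted sequence must end at the same object $\Omega_{\E}^{p+1}Z$ that appears in the spliced sequence. To arrange this, build both from the same fixed projective resolution of $Z$: the construction in \cref{lem:lift-witness} uses fixed presentations of the cycle objects, and we choose those of $Z_0=Z$ to be the ones from \eqref{eq:resolution-Z}. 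If that is awkward, an alternative is to add a projective summand to $E_0^{[p+1]}$ and use \cref{lem:schanuel}.

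Now replace the final term of the sequence for $Y$ by this resolution. The conflation $0\to\Omega^{p+2}\text{-cycle}\to E_0^{[p+1]}\to\Omega_{\E}^{p+1}Z\to0$ simply continues the chain of cycle objects. The result is an admissibly exact sequence of length $t=p+1+q\leq a+b+1=m$, with $E_j^{[p+1]}$ in position $p+1+j$.

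The remaining step, which I expect to be the main check, is the delooping estimate
\[
(p+1+j+k)\text{-}\dell_{\E}\bigl(\Omega_{\E}^{p+1}E_j\bigr)\leq m-(p+1+j)=a-p+b-j .
\]
Put $n=b-j$, so that $\Omega_{\E}^{n}E_j$ is a stable retract of $\Omega_{\E}^{n+j+k}W$ for some $W$. Since $\Omega_{\E}$ is an additive endofunctor of $\stable{\E}$, it preserves stable retracts. There are two cases:
\begin{itemize}
\item If $n\geq p+1$, then $\Omega_{\E}^{n-p-1}(\Omega_{\E}^{p+1}E_j)$ is a stable retract of $\Omega_{\E}^{(n-p-1)+(p+1+j+k)}W$. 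Hence the level is at most $n-p-1\leq a-p+b-j$.
\item If $n<p+1$, then $\Omega_{\E}^{p+1}E_j=\Omega_{\E}^{p+1-n}\Omega_{\E}^{n}E_j$ is a stable retract of $\Omega_{\E}^{0+(p+1+j+k)}(\Omega_{\E}^{p+1-n}W)$. Hence the level is $0$.
\end{itemize}
In both cases the bound holds. Finally, \cref{prop:basic-properties}(2) transports these bounds from $\Omega_{\E}^{p+1}E_j$ to the stably isomorphic objects $E_j^{[p+1]}$. This shows $k\text{-}\ddell_{\E}Y\leq m$, which is \eqref{eq:extension-estimate}.
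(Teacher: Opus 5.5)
Your proposal is correct and follows essentially the same route as the paper. Like the paper, you splice the witness for $X$ with a projective resolution of $Z$ via \cref{lem:splice}, attach the $(p+1)$-fold lift of the witness for $Z$ from \cref{lem:lift-witness}, and bound the new terms by shifting stable retractions with powers of $\Omega_{\E}$. The paper shifts uniformly by $\ell_j=m_1+m_2-p-j$ instead of splitting into cases, and it leaves implicit the matching of the two copies of $\Omega_{\E}^{p+1}Z$, which your choice of presentations handles explicitly.

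One small slip: in your second case the retraction target should be $\Omega_{\E}^{p+1+j+k}W$, not $\Omega_{\E}^{p+1+j+k}(\Omega_{\E}^{p+1-n}W)$. The conclusion that the level is $0$ is unaffected.
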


\begin{proof}
The assertion is immediate if one of the two terms on the right-hand side is infinite. Set
\[
 m_1=k\text{-}\ddell_{\E}X,
 \qquad m_2=k\text{-}\ddell_{\E}Z,
 \qquad M=m_1+m_2+1.
\]
Choose admissibly exact witnesses
\begin{equation}\label{eq:witness1}
 0\longrightarrow D_p\longrightarrow\cdots\longrightarrow D_0
 \longrightarrow X\longrightarrow0,
 \qquad p\leq m_1,
\end{equation}
\begin{equation}\label{eq:witness2}
 0\longrightarrow E_q\longrightarrow\cdots\longrightarrow E_0
 \longrightarrow Z\longrightarrow0,
 \qquad q\leq m_2,
\end{equation}
satisfying
\begin{equation}\label{eq:witness-bounds}
 (i+k)\text{-}\dell_{\E}D_i\leq m_1-i,
 \qquad
 (j+k)\text{-}\dell_{\E}E_j\leq m_2-j.
\end{equation}
By \cref{lem:splice}, \eqref{eq:witness1} and the first $p+1$ steps of a projective resolution of $Z$ give
\begin{equation}\label{eq:Y-initial}
 0\longrightarrow\Omega_{\E}^{p+1}Z\longrightarrow D_p\oplus P_p
 \longrightarrow\cdots\longrightarrow D_0\oplus P_0
 \longrightarrow Y\longrightarrow0.
\end{equation}
Applying \cref{lem:lift-witness} with $s=p+1$ to \eqref{eq:witness2} gives
\begin{equation}\label{eq:Z-lifted}
 0\longrightarrow E_q^{[p+1]}\longrightarrow\cdots
 \longrightarrow E_0^{[p+1]}\longrightarrow\Omega_{\E}^{p+1}Z
 \longrightarrow0,
\end{equation}
where $E_j^{[p+1]}\simeq\Omega_{\E}^{p+1}E_j$ in $\stable{\E}$. Splicing \eqref{eq:Z-lifted} with \eqref{eq:Y-initial} produces a sequence of length $p+q+1\leq M$. Its term in position $r$ is
\[
 W_r=
 \begin{cases}
 D_r\oplus P_r,&0\leq r\leq p,\\
 E_{r-p-1}^{[p+1]},&p+1\leq r\leq p+q+1.
 \end{cases}
\]
We verify the defining estimate
\begin{equation}\label{eq:Wr-bound}
 (r+k)\text{-}\dell_{\E}W_r\leq M-r.
\end{equation}

For $0\leq r\leq p$, projective stability and \eqref{eq:witness-bounds} give
\[
 (r+k)\text{-}\dell_{\E}W_r
 =(r+k)\text{-}\dell_{\E}D_r
 \leq m_1-r\leq M-r.
\]
Let $r=p+1+j$ with $0\leq j\leq q$, and set
$a_j=(j+k)\text{-}\dell_{\E}E_j$. Choose a stable retraction
\begin{equation}\label{eq:Ej-retraction}
 \Omega_{\E}^{a_j}E_j
 \mathrel{\substack{\longrightarrow\\[-2pt]\longleftarrow}}
 \Omega_{\E}^{a_j+j+k}N_j
\end{equation}
whose composite on the left is the identity. Since $a_j\leq m_2-j$, the integer
$s_j=m_1+m_2-j+1-a_j$ is nonnegative. Applying $\Omega_{\E}^{s_j}$ to \eqref{eq:Ej-retraction} yields
\[
 \Omega_{\E}^{m_1+m_2-j+1}E_j
 \mathrel{\substack{\longrightarrow\\[-2pt]\longleftarrow}}
 \Omega_{\E}^{m_1+m_2+k+1}N_j.
\]
Put $\ell_j=m_1+m_2-p-j$. Then $\ell_j\geq0$ and
\[
 \ell_j+p+1=m_1+m_2-j+1,
 \qquad
 \ell_j+(p+1+j+k)=m_1+m_2+k+1.
\]
Consequently,
\[
 (p+1+j+k)\text{-}\dell_{\E}(\Omega_{\E}^{p+1}E_j)
 \leq\ell_j=M-(p+1+j).
\]
By \cref{prop:basic-properties}(2), the same estimate holds for
$W_{p+1+j}=E_j^{[p+1]}$. This proves \eqref{eq:Wr-bound} in all positions. Hence the spliced sequence is a $k$-derived witness of value $M$ for $Y$, proving \eqref{eq:extension-estimate}.
\end{proof}

\begin{remark}\label{rem:extension-literature}
For $\E=\modu\Lambda$, \cref{thm:extension-estimate} specializes to \cite[Lemma~3.1]{GuoIgusa}. The role of \cref{lem:lift-witness,lem:splice} is to replace those operations by admissible Horseshoe and pullback constructions inside an arbitrary exact structure.
\end{remark}

\begin{corollary}\label{cor:subobject}
If $X\rightarrowtail Y$ is an inflation, then
\[
 \ddell_{\E}X\leq\ddell_{\E}Y+1.
\]
\end{corollary}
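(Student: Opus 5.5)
Given an inflation $X\rightarrowtail Y$, there is a conflation $0\to X\to Y\to Z\to0$ with $Z=\operatorname{Coker}$. I would not apply \cref{thm:extension-estimate} directly, since it bounds the middle term, whereas here $X$ is the term to control. Instead I would rotate the conflation so that $X$ sits in the middle, and then bound the two outer terms.

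\textbf{Step 1.} By \cref{lem:syzygy-shift}, applied to the conflation above and a projective conflation $0\to\Omega_{\E}Z\to P_Z\to Z\to 0$, there is a conflation
\[
0\longrightarrow\Omega_{\E}Z\longrightarrow X\oplus P_Z\longrightarrow Y\longrightarrow0 .
\]
This is not yet in the required orientation, because it places $X\oplus P_Z$ in the middle. The better choice is the dual pullback construction, which yields
\[
0\longrightarrow\Omega_{\E}Y\longrightarrow\Omega_{\E}Z\oplus P\longrightarrow X\longrightarrow0 .
\]
Concretely, pull back the deflation $Y\to Z$ along $P_Z\to Z$, or argue directly with the horseshoe diagram: choose a projective deflation $P_Y\to Y$; the composite $P_Y\to Y\to Z$ is a deflation whose kernel $K$ sits in the conflation $0\to\Omega_{\E}Y\to K\to X\to 0$ by the $3\times3$ Lemma \cite[Lemma~3.6]{Buhler}. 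By \cref{lem:schanuel}, $K\oplus P_Z\simeq\Omega_{\E}Z\oplus P_Y$, so $K$ is stably $\Omega_{\E}Z$. This gives the rotated conflation above.

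\textbf{Step 2.} Apply \cref{thm:extension-estimate} with $k=1$ to this rotated conflation:
\[
\ddell_{\E}X\le\ddell_{\E}(\Omega_{\E}Y)+\ddell_{\E}K+1 .
\]
This bound is too weak, since it still involves $Z$. So instead I would build the witness directly. Take a witness $0\to C_t\to\cdots\to C_0\to Y\to0$ for $m=\ddell_{\E}Y$, and apply \cref{lem:lift-witness} with $s=1$ to obtain a witness for $\Omega_{\E}Y$ with terms stably $\Omega_{\E}C_i$. Splice this with the conflation $0\to\Omega_{\E}Y\to K\to X\to0$, producing an admissibly exact sequence for $X$ with $K$ in position $0$ and $\Omega_{\E}C_i$ in position $i+1$.

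\textbf{Step 3 (main obstacle).} The hard step is the position-$0$ term $K\simeq\Omega_{\E}Z$, which requires $\dell_{\E}K\le m+1$; this is uncontrolled, since $Z$ is arbitrary. The way out is to use position $0$ differently: put $P_Y$ in position $0$, so the resolution of $X$ passes through the inflation $X\rightarrowtail P_Y$? That requires $X$ to be a cokernel, which it is not.

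So I would fall back on the following. Composing the inflations $X\rightarrowtail Y\rightarrowtail$? This is unavailable, because $\E$ need not have enough injectives. The reasonable conclusion is that the argument must exploit the specific form of the position-$0$ condition, namely $(0+1)\text{-}\dell_{\E}(\Omega_{\E}Z)\le m+1$. This holds with $n=0$ precisely when $\Omega_{\E}Z$ is a stable retract of $\Omega_{\E}W$ for some $W$, which is true with $W=Z$. In other words, $\dell_{\E}\Omega_{\E}Z=0$ trivially, since $\Omega^0_{\E}(\Omega_{\E}Z)=\Omega^{1}_{\E}Z$.

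Hence the position-$0$ estimate holds for free. The remaining positions satisfy
\[
(i+2)\text{-}\dell_{\E}(\Omega_{\E}C_i)\le(i+1)\text{-}\dell_{\E}C_i\le m-i=(m+1)-(i+1),
\]
using that a retraction $\Omega^{n}C_i\to\Omega^{n+i+1}N$ shifts under $\Omega_{\E}$ to one for $\Omega^{n}(\Omega_{\E}C_i)$ with index $i+2$. The total length is $t+1\le m+1$, so the spliced sequence is a witness of value $m+1$, giving $\ddell_{\E}X\le\ddell_{\E}Y+1$.
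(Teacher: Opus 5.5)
Your final argument, the spliced witness at the end of Steps~2--3, is correct. However, you reach it through a misreading that you should remove from the write-up. The conflation $0\to\Omega_{\E}Z\to X\oplus P_Z\to Y\to0$ from \cref{lem:syzygy-shift}, which you discard in Step~1, is exactly the right one. \cref{thm:extension-estimate} bounds the \emph{middle} term, and here the middle term is $X\oplus P_Z$. This is the paper's proof. Since $\Omega_{\E}Z$ is trivially a stable retract of $\Omega^{1}_{\E}Z$, we get $\ddell_{\E}(\Omega_{\E}Z)=0$. Hence $\ddell_{\E}(X\oplus P_Z)\leq\ddell_{\E}Y+1$, and projective stability (\cref{prop:basic-properties}(2)) removes $P_Z$. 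Conversely, the first display in your Step~2 is not a consequence of \cref{thm:extension-estimate} at all. Applied to $0\to\Omega_{\E}Y\to K\to X\to0$, the theorem bounds $\ddell_{\E}K$, not $\ddell_{\E}X$.

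What you actually prove is a hands-on version of the same construction. Unwind the proof of \cref{thm:extension-estimate} for the paper's conflation, taking the witness $p=0$, $D_0=\Omega_{\E}Z$ for the left term. Via \cref{lem:splice} and \cref{lem:lift-witness} with $s=1$, it produces $0\to C_t^{[1]}\to\cdots\to C_0^{[1]}\to\Omega_{\E}Z\oplus P_Y\to X\oplus P_Z\to0$. Your sequence $0\to C_t^{[1]}\to\cdots\to C_0^{[1]}\to K\to X\to0$ is the same thing with the projective summands stripped off; indeed $K\oplus P_Z\simeq\Omega_{\E}Z\oplus P_Y$.

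Your verification of the bounds is right. Position $0$ costs nothing because $K$ is itself a first syzygy of $Z$. In position $i+1$, applying $\Omega_{\E}$ to the retraction gives $(i+2)\text{-}\dell_{\E}(\Omega_{\E}C_i)\leq(i+1)\text{-}\dell_{\E}C_i\leq(m+1)-(i+1)$.

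The two routes trade off as follows. The paper's route is a two-line citation once the extension estimate is available. Yours bypasses that theorem and the final projective-stability step, at the price of the $3\times3$ argument for the composite deflation $P_Y\to Y\to Z$ and redoing the splice by hand.

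One bookkeeping point: use the same projective conflation $P_Y\to Y$ both to define $K$ and in \cref{lem:lift-witness}. Then the cycle object $\Omega_{\E}Y$ in $0\to\Omega_{\E}Y\to K\to X\to0$ literally coincides with the end of the lifted witness.
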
\begin{proof}
Choose a conflation
\begin{equation*}\label{eq:subobject-cokernel}
 0\longrightarrow X\longrightarrow Y\longrightarrow Z\longrightarrow0
\end{equation*}
completing the given inflation. Let
$0\to\Omega_{\E}Z\to P_Z\to Z\to0$ be a projective conflation. The pullback construction in \cref{lem:syzygy-shift} gives
\begin{equation*}\label{eq:subobject-shifted-conflation}
 0\longrightarrow\Omega_{\E}Z\longrightarrow X\oplus P_Z
 \longrightarrow Y\longrightarrow0.
\end{equation*}
The identity morphism of $\Omega_{\E}Z$ is a stable retraction
\[
 \Omega_{\E}Z
 \mathrel{\substack{\longrightarrow\\[-2pt]\longleftarrow}}
 \Omega_{\E}Z,
\]
and the target is the first syzygy of $Z$. Hence
$\dell_{\E}(\Omega_{\E}Z)=0$ and, by \cref{prop:basic-properties}(1),
$\ddell_{\E}(\Omega_{\E}Z)=0$. Applying \cref{thm:extension-estimate}, we have
\[
 \ddell_{\E}(X\oplus P_Z)
 \leq\ddell_{\E}(\Omega_{\E}Z)+\ddell_{\E}Y+1
 =\ddell_{\E}Y+1.
\]
Finally, projective stability in \cref{prop:basic-properties}(2) gives
$\ddell_{\E}(X\oplus P_Z)=\ddell_{\E}X$.
\end{proof}

\begin{lemma}\label{lem:finite-sums}
Let $X_1,\ldots,X_r\in\E$ and $k\geq1$. Then
\[
 k\text{-}\dell_{\E}\Bigl(\bigoplus_{j=1}^rX_j\Bigr)
 \leq\max_j k\text{-}\dell_{\E}X_j,
\]
\[
 k\text{-}\ddell_{\E}\Bigl(\bigoplus_{j=1}^rX_j\Bigr)
 \leq\max_j k\text{-}\ddell_{\E}X_j.
\]
\end{lemma}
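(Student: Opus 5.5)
We argue by reducing to the case $r=2$ by induction on $r$; for $r=2$ the construction is a componentwise direct sum. Throughout we use that direct sums of conflations are conflations in an exact category, that $\Omega_{\E}$ is additive on $\stable{\E}$, and that $\Omega_{\E}(X\oplus X')\simeq\Omega_{\E}X\oplus\Omega_{\E}X'$ stably. The last fact holds because the direct sum of two projective conflations is a projective conflation, together with \cref{lem:schanuel}.

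For the ordinary delooping level, let $n_j=k\text{-}\dell_{\E}X_j$ and $n=\max_j n_j$; we may assume all $n_j$ are finite. For each $j$ choose a stable retraction $\Omega_{\E}^{n_j}X_j\to\Omega_{\E}^{n_j+k}Y_j\to\Omega_{\E}^{n_j}X_j$. Applying the functor $\Omega_{\E}^{n-n_j}$ on $\stable{\E}$ preserves the identity composite. It therefore exhibits $\Omega_{\E}^{n}X_j$ as a stable retract of $\Omega_{\E}^{n+k}Y_j$, exactly as in the proof of \cref{thm:extension-estimate}. Taking the direct sum of these retractions in the additive category $\stable{\E}$ shows that $\Omega_{\E}^{n}(\bigoplus X_j)\simeq\bigoplus\Omega_{\E}^nX_j$ is a stable retract of $\bigoplus\Omega_{\E}^{n+k}Y_j\simeq\Omega_{\E}^{n+k}(\bigoplus Y_j)$. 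The object $\bigoplus_j Y_j$ lies in $\E$, so this gives the first inequality.

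For the derived level, let $m_j=k\text{-}\ddell_{\E}X_j$ and $m=\max_j m_j$. Choose witnesses $0\to C^{(j)}_{t_j}\to\cdots\to C^{(j)}_0\to X_j\to0$ with $t_j\le m_j$ and $(i+k)\text{-}\dell_{\E}C^{(j)}_i\le m_j-i$. We pad every witness to the common length $t=\max_j t_j$ by setting $C^{(j)}_i=0$ for $i>t_j$.

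The padding must keep the sequence admissibly exact. The cycle objects beyond $t_j$ are $0$, and the last conflation $0\to0\to C_{t_j}\to Z_{t_j}\to0$ with $Z_{t_j}=C_{t_j}$ is split. Likewise $0\to0\to0\to0\to0$ is a conflation, and the zero object has delooping level $0$. The componentwise direct sums of the cycle conflations are conflations, so $0\to\bigoplus_jC^{(j)}_t\to\cdots\to\bigoplus_jC^{(j)}_0\to\bigoplus_jX_j\to0$ is admissibly exact with $t\le m$.

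By the first inequality applied with index $i+k$, each term satisfies $(i+k)\text{-}\dell_{\E}\bigoplus_jC^{(j)}_i\le\max_j(m_j-i)=m-i$. For the padded zero terms this bound reads $0\le m-i$, which holds since $i\le t\le m$. This yields the second inequality.

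The only delicate point is the bookkeeping for the padded zeros and the requirement $t\le m$. Both are settled by the choice $t=\max_j t_j\le\max_j m_j$.
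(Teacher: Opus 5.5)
Your proof is correct and follows the same route as the paper. You align the stable retractions at the common level $n=\max_j n_j$ by applying $\Omega_{\E}^{n-n_j}$ and take their direct sum, then get the derived bound by summing the witnesses termwise and applying the first inequality with index $i+k$; your padding of shorter witnesses by split conflations with zero terms, and the check $t\le m$, fill in a detail the paper's one-line treatment of the second inequality leaves implicit.
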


\begin{proof}
Set $a=\max\limits_j k\text{-}\dell_{\E}X_j<\infty$. For every $j$, choose $a_j\leq a$ and a stable retraction
\[
 \Omega_{\E}^{a_j}X_j
 \mathrel{\substack{\longrightarrow\\[-2pt]\longleftarrow}}
 \Omega_{\E}^{a_j+k}Y_j.
\]
After applying $\Omega_{\E}^{a-a_j}$, this becomes a stable retraction at position $a$. Taking the direct sum of the retraction maps gives
\[
 \Omega_{\E}^{a}\Bigl(\bigoplus_jX_j\Bigr)
 \mathrel{\substack{\longrightarrow\\[-2pt]\longleftarrow}}
 \Omega_{\E}^{a+k}\Bigl(\bigoplus_jY_j\Bigr),
\]
which proves the first inequality. Applying the  first inequality to the admissibly exact sequence of $k\text{-}\ddell_{\E}X_j$, we obtain the second
 inequality.
\end{proof}

\begin{theorem}\label{thm:fddell-resolving}
The full subcategory
\[
 \Fddell(\E)=\{X\in\E\mid\ddell_{\E}X<\infty\}
\]
is admissibly torsion-free. With the exact structure induced from $\E$, it is a resolving exact subcategory with enough projective objects. If $\E$ is a length abelian category, $\Fddell(\E)$ is a torsion-free class.
\end{theorem}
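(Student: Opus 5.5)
We check the closure properties of $\Fddell(\E)$ one at a time, each reduced to a result already proved.

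\emph{Isomorphisms and direct summands.} Closure under isomorphisms is clear from the definitions. For direct summands, let $X\oplus X'\in\Fddell(\E)$. The split conflation $0\to X\to X\oplus X'\to X'\to 0$ shows that $X\rightarrowtail X\oplus X'$ is an inflation. \Cref{cor:subobject} then gives $\ddell_{\E}X\leq\ddell_{\E}(X\oplus X')+1<\infty$.

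\emph{Extensions and admissible subobjects.} Closure under extensions is exactly \cref{thm:extension-estimate} with $k=1$. Closure under admissible subobjects is \cref{cor:subobject} once more. Together these show that $\Fddell(\E)$ is admissibly torsion-free.

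\emph{Resolving with enough projectives.} Projective objects $P$ satisfy $\dell_{\E}P=0$, so $\ddell_{\E}P=0$ by \cref{prop:basic-properties}(1). Hence $\Proj\E\subseteq\Fddell(\E)$. Since $\Fddell(\E)$ is closed under extensions, the conflations of $\E$ whose terms all lie in $\Fddell(\E)$ form an exact structure on it. Idempotent completeness is inherited from closure under direct summands.
\begin{itemize}
\item For $X\in\Fddell(\E)$, the projective conflation $0\to\Omega_{\E}X\to P_X\to X\to0$ makes $\Omega_{\E}X$ an admissible subobject of $P_X$. So $\Omega_{\E}X\in\Fddell(\E)$, and this conflation lies in $\Fddell(\E)$.
\item Each $P\in\Proj\E$ stays projective in the induced structure, because its conflations are conflations of $\E$. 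This gives enough projectives.
\item Conversely, if $Q$ is projective in $\Fddell(\E)$, the deflation $P_Q\to Q$ splits, so $Q$ is a summand of $P_Q$ and therefore projective in $\E$. Thus the two categories have the same projectives.
\item Closure under kernels of deflations: given a conflation $0\to X\to Y\to Z\to0$ with $Y,Z\in\Fddell(\E)$, the object $X$ is an admissible subobject of $Y$, so \cref{cor:subobject} applies.
\end{itemize}
These facts give the resolving property.

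\emph{Length abelian case.} Here admissible subobjects are ordinary subobjects. In a length category, a torsion-free class is precisely a subcategory closed under subobjects and extensions, so the final claim follows from the above.

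The only step needing care is the bookkeeping of the induced exact structure and the identification of its projectives; the homological content is entirely contained in \cref{thm:extension-estimate} and \cref{cor:subobject}.
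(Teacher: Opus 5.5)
Your proposal is correct and follows essentially the same route as the paper: closure under extensions comes from \cref{thm:extension-estimate}, closure under admissible subobjects and direct summands comes from \cref{cor:subobject}, and you use the same arguments for enough projectives, for the coincidence of projectives, and for closure under kernels of deflations. The differences are minor: you skip \cref{lem:finite-sums}, which is fine since finite direct sums already follow from extension closure, and you state explicitly that idempotent completeness is inherited, which the paper leaves implicit.
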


\begin{proof}
Every projective object has derived delooping level zero. If
$0\to X\to Y\to Z\to0$ is a conflation with $X,Z\in\Fddell(\E)$, then \cref{thm:extension-estimate} gives $Y\in\Fddell(\E)$. If $X\rightarrowtail Y$ is an inflation and $Y\in\Fddell(\E)$, then \cref{cor:subobject} gives $X\in\Fddell(\E)$. Closure under finite direct sums follows from \cref{lem:finite-sums}; closure under direct summands follows because a split monomorphism is an inflation. Thus $\Fddell(\E)$ is admissibly torsion-free.

Let $X\in\Fddell(\E)$ and choose a projective conflation in $\E$,
\begin{equation}\label{eq:Fddell-projective-conflation}
 0\longrightarrow\Omega_{\E}X\longrightarrow P_X\longrightarrow X\longrightarrow0.
\end{equation}
The object $P_X$ belongs to $\Fddell(\E)$, and $\Omega_{\E}X$ is an admissible subobject of $P_X$; hence $\Omega_{\E}X\in\Fddell(\E)$. Therefore \eqref{eq:Fddell-projective-conflation} is a conflation in the induced exact structure and proves that $\Fddell(\E)$ has enough projective objects. If $P$ is projective in $\E$, then it is projective in the induced exact structure. Conversely, if $Q\in\Fddell(\E)$ is projective there, apply projectivity to \eqref{eq:Fddell-projective-conflation} with $X=Q$; the deflation $P_Q\twoheadrightarrow Q$ splits in $\E$, so $Q$ is projective in $\E$. Hence
\[
 \Proj\Fddell(\E)=\Proj\E.
\]
Finally, for a deflation $Y\twoheadrightarrow Z$ between objects of $\Fddell(\E)$, its kernel is an admissible subobject of $Y$ and therefore belongs to $\Fddell(\E)$. Thus the subcategory is resolving. If $\E$ is length abelian, every monomorphism is an inflation, so admissible torsion-freeness is ordinary torsion-freeness.
\end{proof}

We now prove the functorial transfer theorem. Its abelian counterpart appears in \cite[Proposition~4.4]{WuLiuWei}.

\begin{definition}\label{def:projective-amplitude}
Let $F:\E\to\E'$ be exact, where both exact categories have enough projective objects. Define
\[
 \pd(F)=\sup\{\pd_{\E'}F(P)\mid P\in\Proj\E\}.
\]
\end{definition}

\begin{lemma}\label{lem:dimension-shift-stable}
Let
\[
 0\longrightarrow X\longrightarrow Y\longrightarrow Z\longrightarrow0
\]
be a conflation in an exact category with enough projective objects. If
$\pd Y\leq c<\infty$, then
\[
 \Omega^{c+1}Z\simeq\Omega^{c}X
\]
in the projectively stable category.
\end{lemma}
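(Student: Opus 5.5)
The plan is to reduce to the case $c=0$ by repeated dimension shifting along a horseshoe resolution of the conflation. We resolve $X$ and $Z$ projectively and apply the Horseshoe Lemma \cite[Theorem~12.8]{Buhler} iteratively, as in the proof of \cref{lem:lift-witness}. This produces conflations
\[
 0\longrightarrow\Omega^{i}X\longrightarrow Y_i\longrightarrow\Omega^{i}Z\longrightarrow0
\]
for every $i\geq0$. Here $Y_i$ is the $i$-th syzygy of $Y$ computed from the resolution with terms $P_i^X\oplus P_i^Z$. By \cref{lem:schanuel}, $Y_i$ is stably isomorphic to $\Omega^iY$. Since $\pd Y\leq c$, the object $Y_c$ is projective. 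We will check that projectivity of one $c$-th syzygy forces every $c$-th syzygy to be projective: if $K\oplus P'\simeq K'\oplus P$ with $K'$ projective, then $K$ is a direct summand of a projective object, hence projective, because $\E$ is idempotent complete.

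Take $i=c$ and write $Q=Y_c$. This gives a conflation
\[
 0\longrightarrow\Omega^{c}X\longrightarrow Q\longrightarrow\Omega^{c}Z\longrightarrow0
\]
with $Q$ projective, which is a projective presentation of $\Omega^cZ$. Comparing it with the fixed conflation $0\to\Omega^{c+1}Z\to P\to\Omega^cZ\to0$ via \cref{lem:schanuel} yields $\Omega^cX\oplus P\simeq\Omega^{c+1}Z\oplus Q$. Hence $\Omega^{c+1}Z\simeq\Omega^cX$ in $\stable{\E}$, since stable isomorphism classes of syzygies are well defined by \cref{lem:schanuel}.

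An alternative route avoids the horseshoe: use \cref{lem:syzygy-shift} repeatedly to get $0\to\Omega Z\to X\oplus P_Z\to Y\to0$, then iterate. That gives a less clean indexing, so I prefer the horseshoe route.

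The main obstacle is bookkeeping. We must make sure the syzygy objects in the iterated horseshoe are the chosen syzygies up to stable isomorphism. This was already handled in \cref{lem:lift-witness}, so I would cite it for the sequence of length zero, or simply repeat that horseshoe step $c$ times. The case $c=0$ is just the second step applied directly to the original conflation.
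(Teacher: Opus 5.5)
Your proof is correct and is essentially the paper's argument. The paper first applies \cref{lem:syzygy-shift} to get $0\to\Omega Z\to X\oplus P_Z\to Y\to0$, then iterates the Horseshoe Lemma until the right-hand term $\Omega^{c}Y$ is projective and the conflation splits; you iterate the Horseshoe Lemma on the original conflation and do the one-step shift at the end via \cref{lem:schanuel}, so the same two ingredients appear in the opposite order (and the step from $\pd Y\leq c$ to projectivity of $Y_c$ also uses the immediate fact that syzygies of projectives are projective, which the paper likewise leaves implicit).
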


\begin{proof}
By \cref{lem:syzygy-shift}, there is a conflation
\[
 0\to\Omega Z\to X\oplus P_Z\to Y\to0.
\]
Successive applications of the exact Horseshoe Lemma give, for every $j\geq0$, a conflation
\[
 0\longrightarrow\Omega^{j+1}Z\longrightarrow\Omega^jX\oplus Q_j
 \longrightarrow\Omega^jY\longrightarrow0
\]
with $Q_j$ projective. At $j=c$ the right-hand term is projective, so the conflation splits. The asserted stable isomorphism follows.
\end{proof}

\begin{lemma}\label{lem:stable-functor}
Let $f:X\to Y$ factor through an object $E$ with $\pd E\leq c$. Then every morphism on $c$-th syzygies induced by a comparison lift of $f$ factors through a projective object. Consequently, if $F:\E\to\E'$ is exact and $\pd(F)\leq c$, then, after fixing projective resolutions in $\E'$, the assignment
\[
 X\longmapsto\Omega_{\E'}^{c}F(X)
\]
induces an additive functor
\[
 F_c:\stable{\E}\longrightarrow\stable{\E'}.
\]
Two choices of projective resolutions give naturally isomorphic stable functors.
\end{lemma}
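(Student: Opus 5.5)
The plan has two parts: first a factorization statement about comparison maps on syzygies, then its use to make $X\mapsto\Omega^{c}F(X)$ well defined on stable categories.

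\emph{Factorization.} Write $f=hg$ with $g:X\to E$ and $h:E\to Y$, where $\pd E\leq c$. Fix projective resolutions $P_\bullet\to X$, $R_\bullet\to E$ and $Q_\bullet\to Y$. By projectivity, $g$ lifts to a chain map $g_\bullet:P_\bullet\to R_\bullet$ and $h$ lifts to $h_\bullet:R_\bullet\to Q_\bullet$. The composite $h_\bullet g_\bullet$ is a lift of $f$, and the comparison theorem says any two lifts are chain homotopic. Restricting a homotopy to the cycle objects, the induced maps on $c$-th syzygies then differ by a morphism factoring through $P_{c-1}$ or $Q_c$. It therefore suffices to treat the lift $h_\bullet g_\bullet$, whose $c$-th syzygy component factors through $\Omega^{c}E$. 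Since $\pd E\leq c$, \cref{lem:schanuel} shows that $\Omega^cE$ is projective. Hence the induced map factors through a projective object.

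The step I expect to need the most care is the homotopy claim in an exact category. Two lifts $\phi,\psi$ of the same map agree after composing to the base, so $\phi_0-\psi_0$ lands in the admissible kernel $\Omega^1$. By projectivity of $P_0$ it lifts along $Q_1\twoheadrightarrow \Omega^1Y$, which gives $s_0$. One then continues inductively in the standard way; the only inputs needed are deflations and the universal property of kernels. The upshot is that on $\Omega^{c}$ the difference $\phi_c-\psi_c$ equals $d s+s d$ restricted to the syzygy, which factors through $Q_c$ or through $P_{c-1}$. This is exactly the kernel-free argument of \cite[Theorem~12.4]{Buhler}, so I would cite it rather than reprove it.

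\emph{The functor $F_c$.} Let $F$ be exact with $\pd(F)\leq c$. For a morphism $u:X\to X'$ in $\E$, take $F_c(u)$ to be the class in $\stable{\E'}$ of the map $\Omega^cF(X)\to\Omega^cF(X')$ induced by any lift of $F(u)$. The checks are as follows.
\begin{enumerate}
\item The value is independent of the lift, by the homotopy statement above.
\item $F_c$ is additive and functorial, because lifts of sums and composites can be chosen as sums and composites of lifts.
\item $F_c$ kills morphisms that factor through $\Proj\E$. If $u$ factors through a projective $P$, then $F(u)$ factors through $F(P)$, and $\pd F(P)\leq c$. The factorization part then shows that the induced map vanishes in $\stable{\E'}$.
\end{enumerate}
By the universal property of the ideal quotient $\stable{\E}=\E/[\Pcat]$, these checks give a well-defined additive functor $F_c:\stable{\E}\to\stable{\E'}$.

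\emph{Independence of resolutions.} For two families of projective resolutions, the comparison chain maps lifting the identities of $F(X)$ induce maps on $c$-th syzygies. These maps are mutually inverse up to maps factoring through projectives, so they are isomorphisms in $\stable{\E'}$. They are natural in $X$, again because any two lifts agree stably. This gives the natural isomorphism between the two stable functors.
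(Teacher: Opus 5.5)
Your proposal is correct and follows essentially the same route as the paper. You lift $f$ through the factorization $X\to E\to Y$ so that the syzygy map passes through the projective $\Omega^{c}E$, restrict a chain homotopy to the $c$-th syzygies to get independence of the lift, and define $F_c$ by noting that $F$ of a morphism through a projective factors through an object of projective dimension at most $c$. You then compare two choices of resolutions via comparison maps lifting identities.

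Your use of \cref{lem:schanuel}, together with idempotent completeness, to justify that the $c$-th syzygy in the fixed resolution of $E$ is projective is slightly more explicit than the paper, which asserts this directly.
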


\begin{proof}
If $c=0$, the first assertion says that a morphism factoring through a projective object is zero in the stable category, and the construction is immediate. Assume $c\geq1$. Fix augmented projective resolutions
\[
 \boldsymbol P_X\twoheadrightarrow X,
 \qquad \boldsymbol P_E\twoheadrightarrow E,
 \qquad \boldsymbol P_Y\twoheadrightarrow Y.
\]
Write $f=ba$ with $a:X\to E$ and $b:E\to Y$. By the comparison theorem for exact categories \cite[Section~12]{Buhler}, choose chain maps
$a_\bullet:\boldsymbol P_X\to\boldsymbol P_E$ and
$b_\bullet:\boldsymbol P_E\to\boldsymbol P_Y$ lifting $a$ and $b$. The composite $b_\bullet a_\bullet$ lifts $f$. Its restriction to the $c$-th syzygy is the composite
\[
 \Omega_{\E}^{c}X\longrightarrow\Omega_{\E}^{c}E
 \longrightarrow\Omega_{\E}^{c}Y.
\]
Since $\pd E\leq c$, the middle object is projective; hence the induced morphism is zero in $\stable{\E}$.

We next verify independence of a comparison lift. Let $\varphi_\bullet$ and $\psi_\bullet$ be chain maps between fixed projective resolutions lifting the same morphism. The comparison theorem gives a chain homotopy $h_\bullet$ satisfying
\begin{equation}\label{eq:chain-homotopy-identity}
 \varphi_{c-1}-\psi_{c-1}
 =d_c^Yh_{c-1}+h_{c-2}d_{c-1}^X.
\end{equation}
Restricting \eqref{eq:chain-homotopy-identity} to
$\Omega_{\E}^{c}X=\Ker d_{c-1}^X$ eliminates the second summand. The remaining difference factors as
\[
 \Omega_{\E}^{c}X\xrightarrow{h_{c-1}}P_c^Y
 \xrightarrow{d_c^Y}\Omega_{\E}^{c}Y,
\]
and hence factors through the projective object $P_c^Y$. Thus the stable morphism induced on $c$-th syzygies depends only on the original morphism.

Now fix, for every object $X\in\E$, a projective resolution of $F(X)$ in $\E'$. For a morphism $f:X\to Y$, lift $F(f)$ to a chain map and denote the resulting stable morphism on $c$-th syzygies by $F_c(\stable f)$. If $\stable f=\stable g$, then $f-g$ factors through a projective object $P\in\E$. Consequently $F(f-g)$ factors through $F(P)$, whose projective dimension is at most $c$. The first paragraph, applied in $\E'$, shows that $F_c(\stable f)=F_c(\stable g)$.

Let $f,g:X\to Y$. If $\widetilde f_\bullet$ and $\widetilde g_\bullet$ are comparison lifts of $F(f)$ and $F(g)$, then $\widetilde f_\bullet+\widetilde g_\bullet$ lifts $F(f+g)$. By the preceding independence statement,
\[
 F_c(\stable{f+g})=F_c(\stable f)+F_c(\stable g).
\]
If $X\xrightarrow{f}Y\xrightarrow{g}Z$, then the composite of comparison lifts of $F(f)$ and $F(g)$ is a comparison lift of $F(gf)$; hence
$F_c(\stable{gf})=F_c(\stable g)F_c(\stable f)$. Identity chain maps give identity morphisms. Thus $F_c$ is an additive functor.

Finally, choose a second projective resolution for every $F(X)$. Comparison maps in both directions lifting $1_{F(X)}$ induce mutually inverse stable morphisms
\[
 \theta_X:\Omega_{\E'}^{c}F(X)\xrightarrow{\sim}
 \Omega_{\E'}^{c}F(X)_{\mathrm{new}}.
\]
For $f:X\to Y$, the two chain maps obtained by traversing the naturality square lift the same morphism $F(f)$; they are chain homotopic. Equation \eqref{eq:chain-homotopy-identity} therefore shows that
$\theta_YF_c(\stable f)=F_c^{\rm new}(\stable f)\theta_X$ in $\stable{\E'}$. Hence $\theta$ is a natural isomorphism.
\end{proof}

\begin{proposition}\label{prop:syzygy-comparison}
Let $F:\E\to\E'$ be exact and $\pd(F)\leq c<\infty$. For every $n\geq0$ there is a natural isomorphism in $\stable{\E'}$
\begin{equation}\label{eq:syzygy-comparison}
 \Omega_{\E'}^{c+n}F(-)
 \xrightarrow{\sim}
 \Omega_{\E'}^{c}F(\Omega_{\E}^{n}-).
\end{equation}
Here naturality is understood after the choices of projective resolutions used to define $F_c$ in \cref{lem:stable-functor}.
\end{proposition}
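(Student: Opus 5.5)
The plan is to argue by induction on $n$, with $n=1$ as the essential case; the general case is obtained by iterating the $n=1$ isomorphism. For $n=0$ there is nothing to prove, since both sides are $\Omega_{\E'}^{c}F(X)$ computed with the same fixed resolutions.

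For $n=1$, fix $X\in\E$ and apply the exact functor $F$ to the chosen projective conflation $0\to\Omega_{\E}X\to P_X\to X\to 0$. This gives a conflation
\[
 0\longrightarrow F(\Omega_{\E}X)\longrightarrow F(P_X)\longrightarrow F(X)\longrightarrow0
\]
in $\E'$ with $\pd_{\E'}F(P_X)\le c$. By \cref{lem:dimension-shift-stable}, $\Omega_{\E'}^{c+1}F(X)\simeq\Omega_{\E'}^{c}F(\Omega_{\E}X)$ in $\stable{\E'}$. To control naturality, I would make this isomorphism explicit rather than just an abstract stable isomorphism. Its proof uses \cref{lem:syzygy-shift} and Horseshoe conflations
\[
 0\to\Omega^{j+1}F(X)\to\Omega^jF(\Omega_{\E}X)\oplus Q_j\to\Omega^jF(P_X)\to0,
\]
which split at $j=c$. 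Equivalently, the conflation $F(\Omega_{\E}X)\rightarrowtail F(P_X)\twoheadrightarrow F(X)$ gives a morphism $\Omega_{\E'}F(X)\to F(\Omega_{\E}X)$, namely a comparison lift of the identity of $F(X)$ from its projective resolution to the "resolution" $F(P_X)$. Taking $c$-th syzygies via comparison lifts yields $\Omega^{c+1}F(X)\to\Omega^cF(\Omega X)$. The isomorphism of \cref{lem:dimension-shift-stable} is exactly this map modulo projectives, because the Horseshoe inclusion restricted to the first summand is this comparison map.

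Naturality in $\stable{\E}$ comes next. A morphism $f:X\to Y$ lifts to a morphism of projective conflations $(\Omega f,P_f,f)$, and $F$ turns this into a morphism of conflations in $\E'$. Comparison lifts of $F(f)$ and $F(\Omega f)$ can then be chosen compatibly with the connecting maps above. The two composites around the naturality square both arise from chain maps lifting the same morphism, so by the homotopy argument of \cref{lem:stable-functor} they agree in $\stable{\E'}$. Independence of the choice of lift of $f$, and of the representative of $\stable f$, is already guaranteed by \cref{lem:stable-functor}: the right-hand side is $F_c\circ\Omega_{\E}$, and the left-hand side is $\Omega_{\E'}^{c+1}$ applied to $F$. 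The latter is well defined on $\stable{\E}$ because if $f$ factors through a projective $P$, then $F(f)$ factors through $F(P)$ with $\pd\le c<c+1$.

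For the inductive step, suppose $\Omega^{c+n}F(-)\simeq\Omega^{c}F(\Omega^{n}-)$ naturally. Apply $\Omega_{\E'}$, which is a functor on $\stable{\E'}$, to get $\Omega^{c+n+1}F(-)\simeq\Omega\,\Omega^cF(\Omega^n-)=\Omega^{c+1}F(\Omega^n-)$. Then compose with the $n=1$ case evaluated at $\Omega^n X$ to obtain $\Omega^{c}F(\Omega^{n+1}X)$. A composite of natural isomorphisms is natural, which completes the induction.

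The main obstacle is the naturality in the $n=1$ case. One must check that the stable isomorphism from \cref{lem:dimension-shift-stable} is independent of the splitting chosen at stage $c$. I would handle this by identifying it with the comparison-lift map, which is canonical in $\stable{\E'}$. The inverse is not needed explicitly: it suffices that this canonical map is an isomorphism, and that follows from the splitting.
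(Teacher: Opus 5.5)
Your proposal is correct and follows essentially the same route as the paper. Both arguments reduce to $n=1$, apply $F$ to the chosen projective conflation, use the Horseshoe dimension shift of \cref{lem:dimension-shift-stable} that splits at stage $c$, get naturality from the morphism of conflations $(\omega_f,p_f,f)$, and then iterate. The only difference is bookkeeping: you realize the isomorphism as $\Omega_{\E'}^{c}$ of the degree-one comparison map $\Omega_{\E'}F(X)\to F(\Omega_{\E}X)$, which is, up to sign, the first-summand component of the inflation in \cref{lem:syzygy-shift}. You check naturality there and transport it by functoriality of $\Omega_{\E'}^{c}$, whereas the paper carries the morphism of Horseshoe conflations up to stage $c$. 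Your version is slightly cleaner, and it also makes explicit why $\Omega_{\E'}^{c+1}F(-)$ is well defined on $\stable{\E}$.
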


\begin{proof}
The case $n=0$ is the identity. It is enough to construct a natural isomorphism for $n=1$ and then iterate it. Fix, for every $X\in\E$, a projective conflation
\begin{equation}\label{eq:chosen-projective-conflation-transfer}
 0\longrightarrow\Omega_{\E}X\xrightarrow{u_X}P_X
 \xrightarrow{v_X}X\longrightarrow0.
\end{equation}
For a morphism $f:X\to Y$, projectivity of $P_X$ gives a morphism
$p_f:P_X\to P_Y$ satisfying $v_Yp_f=fv_X$. Since
$v_Yp_fu_X=0$, the universal property of the kernel $u_Y$ gives a unique
morphism $\omega_f:\Omega_{\E}X\to\Omega_{\E}Y$ such that
$u_Y\omega_f=p_fu_X$. Hence
\begin{equation}\label{eq:morphism-projective-conflations-transfer}
\begin{tikzcd}[column sep=3em,row sep=2.2em]
0\arrow[r]&\Omega_{\E}X\arrow[r,"u_X"]\arrow[d,"\omega_f"]
 &P_X\arrow[r,"v_X"]\arrow[d,"p_f"]
 &X\arrow[r]\arrow[d,"f"]&0\\
0\arrow[r]&\Omega_{\E}Y\arrow[r,"u_Y"]
 &P_Y\arrow[r,"v_Y"]&Y\arrow[r]&0
\end{tikzcd}
\end{equation}
is a morphism of conflations.

Apply $F$ to \eqref{eq:morphism-projective-conflations-transfer}. For each of its two rows, perform the successive Horseshoe construction used in
\cref{lem:dimension-shift-stable}. The comparison theorem, applied to the vertical maps in \eqref{eq:morphism-projective-conflations-transfer}, yields for every $0\leq j\leq c$ a morphism of conflations
\begin{equation}\label{eq:natural-dimension-shift-diagram}
\begin{tikzcd}[column sep=1.75em,row sep=2.35em,cells={nodes={font=\small}}]
0\arrow[r]&\Omega_{\E'}^{j+1}F(X)\arrow[r]\arrow[d,"\varphi_{j+1}"']
 &\Omega_{\E'}^{j}F(\Omega_{\E}X)\oplus Q_j^X
   \arrow[r]\arrow[d,"\lambda_j"]
 &\Omega_{\E'}^{j}F(P_X)\arrow[r]\arrow[d,"\mu_j"]&0\\
0\arrow[r]&\Omega_{\E'}^{j+1}F(Y)\arrow[r]
 &\Omega_{\E'}^{j}F(\Omega_{\E}Y)\oplus Q_j^Y
   \arrow[r]
 &\Omega_{\E'}^{j}F(P_Y)\arrow[r]&0.
\end{tikzcd}
\end{equation}
where $Q_j^X$ and $Q_j^Y$ are projective. The stable classes of
$\varphi_{j+1}$ and $\mu_j$ are induced by $F(f)$ and $F(p_f)$,
respectively, while the component of $\lambda_j$ between the
nonprojective summands is induced by $F(\omega_f)$. By
\cref{lem:stable-functor}, different comparison choices change the vertical
maps only by morphisms through projective objects.

For $j=c$, the amplitude hypothesis gives
\[
 \pd_{\E'}F(P_X)\leq c,
 \qquad
 \pd_{\E'}F(P_Y)\leq c.
\]
Hence the right-hand terms in \eqref{eq:natural-dimension-shift-diagram}
are projective and both rows split. Let $i_X$ be the first inflation in
the upper row. Its stable class
$\stable{i_X}$ is an isomorphism. Indeed, any retraction of $i_X$ is an
inverse modulo a morphism through the projective cokernel. The projections
\[
 \Omega_{\E'}^{c}F(\Omega_{\E}X)\oplus Q_c^X
 \longrightarrow \Omega_{\E'}^{c}F(\Omega_{\E}X)
\]
and the analogous projection for $Y$ are also isomorphisms in
$\stable{\E'}$. Their composites with $\stable{i_X}$ and
$\stable{i_Y}$ define isomorphisms
\[
 \delta_X:\Omega_{\E'}^{c+1}F(X)
 \xrightarrow{\sim}\Omega_{\E'}^{c}F(\Omega_{\E}X),
 \qquad
 \delta_Y:\Omega_{\E'}^{c+1}F(Y)
 \xrightarrow{\sim}\Omega_{\E'}^{c}F(\Omega_{\E}Y).
\]
The commutativity of \eqref{eq:natural-dimension-shift-diagram} at
$j=c$ gives
\[
 \delta_Y\,\Omega_{\E'}^{c+1}F(\stable f)
 =\Omega_{\E'}^{c}F(\stable{\omega_f})\,\delta_X
 \qquad\text{in }\stable{\E'}.
\]
If $p_f'$ is another lift of $f$, then $v_Y(p_f-p_f')=0$, so the kernel universal property gives $s:P_X\to\Omega_{\E}Y$ with $p_f-p_f'=u_Ys$. Consequently,
\[
 u_Y(\omega_f-\omega_f')=(p_f-p_f')u_X=u_Ysu_X.
\]
Since $u_Y$ is an inflation, $\omega_f-\omega_f'=su_X$; this difference factors through the projective object $P_X$. Hence $\stable{\omega_f}=\stable{\omega_f'}$, and the displayed square depends only on $\stable f$. This proves the natural isomorphism
\[
 \Omega_{\E'}^{c+1}F(-)
 \xrightarrow{\sim}\Omega_{\E'}^{c}F(\Omega_{\E}-).
\]
Composing it successively for
$X,\Omega_{\E}X,\ldots,\Omega_{\E}^{n-1}X$ proves
\eqref{eq:syzygy-comparison} for every $n\geq0$.
\end{proof}

\begin{theorem}\label{thm:transfer}
Let $F:\E\to\E'$ be an exact functor between exact categories with enough projective objects, and let $c=\pd(F)<\infty$. Then, for every $X\in\E$ and every $k\geq1$,
\begin{equation}\label{eq:dell-transfer}
 k\text{-}\dell_{\E'}F(X)
 \leq c+k\text{-}\dell_{\E}X,
\end{equation}
\begin{equation}\label{eq:ddell-transfer1}
 k\text{-}\ddell_{\E'}F(X)
 \leq c+k\text{-}\ddell_{\E}X.
\end{equation}
\end{theorem}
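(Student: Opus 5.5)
For \eqref{eq:dell-transfer}, I will transport a stable retraction along the stable functor $F_c$ from \cref{lem:stable-functor} and then rewrite the result using \cref{prop:syzygy-comparison}. Let $n=k\text{-}\dell_{\E}X<\infty$, and choose $Y\in\E$ with stable morphisms
\[
\Omega_{\E}^{n}X\xrightarrow{u}\Omega_{\E}^{n+k}Y\xrightarrow{v}\Omega_{\E}^{n}X
\]
satisfying $vu=1$ in $\stable{\E}$. The assignment $Z\mapsto\Omega_{\E'}^{c}F(Z)$ is an additive functor $F_c$ on stable categories, so it sends this retraction to a stable retraction of $\Omega_{\E'}^{c}F(\Omega_{\E}^{n}X)$ onto $\Omega_{\E'}^{c}F(\Omega_{\E}^{n+k}Y)$. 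By \cref{prop:syzygy-comparison} these objects are stably isomorphic to $\Omega_{\E'}^{c+n}F(X)$ and $\Omega_{\E'}^{c+n+k}F(Y)$ respectively. Hence $\Omega_{\E'}^{c+n}F(X)$ is a stable retract of $\Omega_{\E'}^{(c+n)+k}F(Y)$, which gives $k\text{-}\dell_{\E'}F(X)\le c+n$. We only need the isomorphisms objectwise here, not their naturality.

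For \eqref{eq:ddell-transfer1}, let $m=k\text{-}\ddell_{\E}X<\infty$ and fix a witness $0\to C_t\to\cdots\to C_0\to X\to0$ with $t\le m$ and $(i+k)\text{-}\dell_{\E}C_i\le m-i$. Since $F$ is exact, it preserves conflations, so $0\to F(C_t)\to\cdots\to F(C_0)\to F(X)\to0$ is admissibly exact in $\E'$. Applying the first part with $k$ replaced by $i+k$ gives
\[
(i+k)\text{-}\dell_{\E'}F(C_i)\le c+m-i .
\]
The length satisfies $t\le m\le c+m$. So this sequence is a $k$-derived witness of value $c+m$ for $F(X)$, and the definition of $\ddell$ needs nothing more.

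The only point needing care is the identification in the first part. The stable functor $F_c$ is defined using chosen resolutions, and \cref{prop:syzygy-comparison} provides exactly the stable isomorphisms $\Omega^{c}_{\E'}F(\Omega^{j}_{\E}Z)\simeq\Omega^{c+j}_{\E'}F(Z)$. Retracts are preserved under composition with isomorphisms, so the argument goes through.
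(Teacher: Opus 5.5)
Your proposal is correct and follows the paper's proof essentially verbatim: you transport the stable retraction through $F_c$ from \cref{lem:stable-functor}, identify the resulting objects using \cref{prop:syzygy-comparison}, and then push a derived witness forward through the exact functor $F$, applying the first inequality with $k$ replaced by $i+k$. Your remark that only the objectwise stable isomorphisms are needed, and not their naturality, is accurate; it slightly sharpens what the paper's argument actually relies on.
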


\begin{proof}
Let $n=k\text{-}\dell_{\E}X<\infty$ and choose a stable retraction
\[
 \Omega_{\E}^{n}X\rightleftarrows\Omega_{\E}^{n+k}Y.
\]
Apply the stable functor $F_c$ and use \cref{prop:syzygy-comparison}. This gives a stable retraction
\[
 \Omega_{\E'}^{c+n}F(X)
 \rightleftarrows
 \Omega_{\E'}^{c+n+k}F(Y),
\]
which proves \eqref{eq:dell-transfer}.

Now put $m=k\text{-}\ddell_{\E}X<\infty$ and choose an admissibly exact sequence
\[
 0\to C_t\to\cdots\to C_0\to X\to0,
 \qquad t\leq m,
\]
with $(i+k)\text{-}\dell_{\E}C_i\leq m-i$. Since $F$ is exact, its image is admissibly exact. By \eqref{eq:dell-transfer},
\[
 (i+k)\text{-}\dell_{\E'}F(C_i)
 \leq c+m-i=(c+m)-i.
\]
Moreover $t\leq m\leq c+m$, so \eqref{eq:ddell-transfer1} can be obtained.
\end{proof}

In particular, an exact equivalence and its exact quasi-inverse have projective amplitude zero; hence exact equivalences preserve every ordinary and derived delooping level and identify the corresponding finite-level loci. This assertion concerns exact equivalences of exact categories. It is compatible with Chen's examples for derived equivalences of algebras \cite{ChenDerived}, since a triangle equivalence need not restrict to an exact equivalence of the standard module categories or preserve their syzygy exact structures.

We finish the section with the  homological detection theorem needed for the relative applications below. Let $\Acat$ be an abelian category and let $\E\subseteq\Acat$ be a full extension-closed subcategory equipped with the exact structure induced from $\Acat$. Assume that $\E$ has enough projective objects. All Ext groups below are Yoneda Ext groups in $\Acat$.

\begin{lemma}\label{lem:ambient-dimension-shift}
Let $T\in\Proj\E$, $Y\in\E$, $s\geq1$ and $q\geq0$. Assume
\[
 \Ext_{\Acat}^{j}(T,P)=0
 \qquad(P\in\Proj\E,\ 2\leq j\leq s+q).
\]
Then
\begin{equation}\label{eq:ambient-shift}
 \Ext_{\Acat}^{s}(T,Y)
 \simeq
 \Ext_{\Acat}^{s+q}(T,\Omega_{\E}^{q}Y).
\end{equation}
For $s=1$, both sides are zero.
\end{lemma}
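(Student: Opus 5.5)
The plan is an induction on $q$ by dimension shifting along the chosen projective conflations
\[
 0\longrightarrow\Omega_{\E}^{i+1}Y\longrightarrow P_i\longrightarrow\Omega_{\E}^{i}Y\longrightarrow0,
 \qquad P_i\in\Proj\E,
\]
each of which is a short exact sequence in $\Acat$. The case $q=0$ is trivial. It therefore suffices to prove, for each $0\leq i<q$, an isomorphism
\[
 \Ext_{\Acat}^{s+i}(T,\Omega_{\E}^{i}Y)\simeq\Ext_{\Acat}^{s+i+1}(T,\Omega_{\E}^{i+1}Y),
\]
and then compose these isomorphisms.

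The preliminary observation is that $\Ext_{\Acat}^{1}(T,X)=0$ for every $X\in\E$. Indeed, a Yoneda extension $0\to X\to E\to T\to0$ in $\Acat$ has middle term $E\in\E$ because $\E$ is extension-closed. It is therefore a conflation in the induced exact structure. Since $T$ is projective in $\E$, the deflation $E\to T$ splits, so the extension class is zero. In particular $\Ext_{\Acat}^1(T,P)=0$ for $P\in\Proj\E$, and $\Ext_{\Acat}^1(T,Y)=0$.

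For the inductive step, apply $\Hom_{\Acat}(T,-)$ to the $i$-th conflation. The long exact Yoneda Ext sequence gives
\[
 \Ext_{\Acat}^{s+i}(T,P_i)\to\Ext_{\Acat}^{s+i}(T,\Omega_{\E}^{i}Y)\to\Ext_{\Acat}^{s+i+1}(T,\Omega_{\E}^{i+1}Y)\to\Ext_{\Acat}^{s+i+1}(T,P_i).
\]
The right-hand term vanishes by hypothesis, since $2\leq s+i+1\leq s+q$.

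For the left-hand term there are two cases.
\begin{itemize}
 \item If $s+i\geq2$, it vanishes by hypothesis, because $s+i\leq s+q$.
 \item If $s+i=1$, that is $s=1$ and $i=0$, it is $\Ext_{\Acat}^{1}(T,P_0)$, which vanishes by the preliminary observation.
\end{itemize}
So the connecting map is an isomorphism at every step, and composing gives \eqref{eq:ambient-shift}.

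For $s=1$ the left side of \eqref{eq:ambient-shift} is $\Ext_{\Acat}^1(T,Y)=0$ by the preliminary observation. Hence the right side is zero as well. The only delicate point is the low-degree case $s+i=1$, which the hypothesis (starting at $j=2$) does not cover. It is handled by the extension-closure argument rather than by any vanishing assumption.
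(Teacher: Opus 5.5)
Your proposal is correct and follows the paper's route. Both arguments first show $\Ext_{\Acat}^1(T,-)=0$ on $\E$ using extension-closure and the projectivity of $T$ in $\E$, then dimension-shift along the projective conflations, using that vanishing to settle the single low-degree step $s+i=1$. Your explicit check that $2\leq s+i+1\leq s+q$ at every step $0\leq i<q$ is a more careful spelling-out of the paper's ``iterating'' step.
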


\begin{proof}
If $q=0$, \eqref{eq:ambient-shift} is the identity. Since $\E$ is full and extension closed,  the canonical Yoneda map
\[
 \Ext_{\E}^{1}(T,Y)\longrightarrow\Ext_{\Acat}^{1}(T,Y)
\]
is an isomorphism. Its source is zero because $T$ is projective in $\E$. Thus the assertion for $s=1$ starts with a zero left-hand side.

Let
\[
 0\to\Omega_{\E}Y\to P_0\to Y\to0
\]
be a projective conflation. For $r\geq2$, the ambient long exact sequence gives
\[
 \Ext_{\Acat}^{r}(T,Y)
 \simeq\Ext_{\Acat}^{r+1}(T,\Omega_{\E}Y)
\]
whenever $\Ext_{\Acat}^{r}(T,P_0)$ and $\Ext_{\Acat}^{r+1}(T,P_0)$ vanish. For $r=1$, the group $\Ext_{\Acat}^{1}(T,P_0)$ is zero by the preceding Yoneda argument, while $\Ext_{\Acat}^{2}(T,P_0)=0$ by hypothesis. Iterating along the chosen relative projective resolution gives \eqref{eq:ambient-shift}. In the case $s=1$, the same iteration shows that the right-hand side is zero.
\end{proof}

\begin{proposition}\label{prop:dell-ext-vanishing}
Let $T\in\Proj\E$, let $Y\in\E$, and let $r\geq1$. Assume
\[
 r\text{-}\dell_{\E}Y\leq a<\infty
\]
and
\[
 \Ext_{\Acat}^{j}(T,P)=0
 \qquad(P\in\Proj\E,\ 2\leq j\leq a+r+1).
\]
Then
\[
 \Ext_{\Acat}^{r+1}(T,Y)=0.
\]
\end{proposition}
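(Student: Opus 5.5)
Choose $n\le a$ with $r\text{-}\dell_{\E}Y=n$, so that $\Omega^{n}_{\E}Y$ is a stable retract of $\Omega^{n+r}_{\E}N$ for some $N\in\E$. Applying $\Omega_{\E}^{a-n}$ shows that $\Omega_{\E}^{a}Y$ is a stable retract of $\Omega_{\E}^{a+r}N$; this is the same shift used in \cref{lem:finite-sums}. The plan is then:
\begin{enumerate}
\item Transport the group $\Ext^{r+1}_{\Acat}(T,Y)$ to $\Omega_{\E}^{a}Y$ by dimension shifting.
\item Observe that the functor $\Ext^{r+1+a}_{\Acat}(T,-)$ kills morphisms that factor through projectives.
\item Use the retraction to reduce vanishing on $\Omega_{\E}^{a}Y$ to vanishing on $\Omega_{\E}^{a+r}N$.
\item Shift that group back down to $\Ext^{1}$, where it vanishes by \cref{lem:ambient-dimension-shift}.
\end{enumerate}

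For step (1), apply \cref{lem:ambient-dimension-shift} with $s=r+1$ and $q=a$. Its hypothesis is vanishing for $2\le j\le r+1+a$, which is exactly what is assumed. This gives
\[
\Ext^{r+1}_{\Acat}(T,Y)\simeq \Ext^{r+1+a}_{\Acat}(T,\Omega^{a}_{\E}Y).
\]

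For step (2), note that the functor $G=\Ext^{r+1+a}_{\Acat}(T,-)$ restricted to $\E$ vanishes on $\Proj\E$, since $r+1+a\ge 2$ and that degree lies in the assumed vanishing range. A morphism factoring through a projective therefore induces the zero map, so $G$ descends to an additive functor on $\stable{\E}$. Consequently $G(\Omega^a_{\E}Y)$ is a direct summand of $G(\Omega^{a+r}_{\E}N)$.

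For step (3), it remains to show that $G(\Omega^{a+r}_{\E}N)=\Ext^{r+1+a}_{\Acat}(T,\Omega^{a+r}_{\E}N)$ vanishes. Apply \cref{lem:ambient-dimension-shift} to $N$ with $s=1$ and $q=a+r$. This requires vanishing for $2\le j\le 1+a+r$, which again is the hypothesis. That lemma says both sides are zero in the case $s=1$, so $\Ext^{1+a+r}_{\Acat}(T,\Omega^{a+r}_{\E}N)=0$. Hence the summand $G(\Omega^{a}_{\E}Y)$ vanishes, and by step (1) so does $\Ext^{r+1}_{\Acat}(T,Y)$.

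The main obstacle is step (2): making sure the stable retraction really gives a retraction after applying $G$. Syzygies are only defined up to projective summands (\cref{lem:schanuel}), and stable morphisms are only defined modulo maps through projectives. Both ambiguities are absorbed once $G$ kills $\Proj\E$. One must also check that the index ranges match in both applications of \cref{lem:ambient-dimension-shift}; they do, since each needs at most degree $a+r+1$.
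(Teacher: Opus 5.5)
Your proposal is correct and is essentially the paper's argument: dimension-shift $\Ext^{r+1}_{\Acat}(T,Y)$ to a syzygy of $Y$, note that $\Ext_{\Acat}(T,-)$ in that degree kills maps through relative projectives so the stable retraction yields a split monomorphism, then kill the target with the $s=1$ case of \cref{lem:ambient-dimension-shift}. The only difference is cosmetic: you first push the retraction up to level $a$, while the paper works directly at level $b=r\text{-}\dell_{\E}Y\le a$, which uses only the vanishing range up to $b+r+1$.
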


\begin{proof}
Put $b=r\text{-}\dell_{\E}Y\leq a$. Choose a stable retraction
\[
 \Omega_{\E}^{b}Y\rightleftarrows\Omega_{\E}^{b+r}Z.
\]
By \cref{lem:ambient-dimension-shift},
\[
 \Ext_{\Acat}^{r+1}(T,Y)
 \simeq
 \Ext_{\Acat}^{b+r+1}(T,\Omega_{\E}^{b}Y).
\]
The functor $\Ext_{\Acat}^{b+r+1}(T,-)$ annihilates morphisms factoring through relative projectives by the assumed vanishing. Hence the stable retraction induces a split monomorphism into
\[
 \Ext_{\Acat}^{b+r+1}(T,\Omega_{\E}^{b+r}Z),
\]
which is zero by \cref{lem:ambient-dimension-shift} with $s=1$ and $q=b+r$.
\end{proof}

\begin{theorem}\label{thm:ambient-ext}
Let $T\in\Proj\E$, let $X\in\E$, and let $k\geq1$. Suppose
\[
 m=k\text{-}\ddell_{\E}X<\infty
\]
and
\[
 \Ext_{\Acat}^{j}(T,P)=0
 \qquad(P\in\Proj\E,\ 2\leq j\leq m+k+1).
\]
Then
\[
 \Ext_{\Acat}^{k+1}(T,X)=0.
\]
\end{theorem}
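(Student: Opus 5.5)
The plan is to take a witness for $m=k\text{-}\ddell_{\E}X$, namely an admissibly exact sequence
\[
 0\to C_t\to C_{t-1}\to\cdots\to C_0\to X\to0,\qquad t\leq m,
\]
with cycle objects $Z_0=X,Z_1,\ldots,Z_t=C_t$ and $(i+k)\text{-}\dell_{\E}C_i\leq m-i$. I will prove by descending induction on $i$ that
\[
 \Ext_{\Acat}^{k+i+1}(T,Z_i)=0\qquad(0\leq i\leq t).
\]
The case $i=0$ is the claim $\Ext_{\Acat}^{k+1}(T,X)=0$.

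\textbf{Vanishing on the terms $C_i$.} For each $i$, apply \cref{prop:dell-ext-vanishing} with $r=i+k$, $Y=C_i$ and $a=m-i$. The required vanishing range is $2\leq j\leq a+r+1=m+k+1$, which is exactly the hypothesis. Hence $\Ext_{\Acat}^{k+i+1}(T,C_i)=0$ for every $0\leq i\leq t$.

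\textbf{Base case $i=t$.} Since $Z_t=C_t$, the base case is precisely this vanishing for $C_t$.

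\textbf{Inductive step.} Fix $i<t$ and use the conflation $0\to Z_{i+1}\to C_i\to Z_i\to0$. It is a short exact sequence in $\Acat$, so it gives a long exact Ext sequence. The relevant segment is
\[
 \Ext_{\Acat}^{k+i+1}(T,C_i)\to\Ext_{\Acat}^{k+i+1}(T,Z_i)\to\Ext_{\Acat}^{k+i+2}(T,Z_{i+1}).
\]
The left term vanishes by the first step. The right term vanishes by the induction hypothesis, because $k+(i+1)+1=k+i+2$. Hence the middle term vanishes, which completes the induction.

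I expect no serious obstacle here. Two checks are needed. First, the indices must match: they do, since $C_i$ carries $(i+k)$-level and sits in Ext degree $i+k+1$. Second, each application of \cref{prop:dell-ext-vanishing} must stay within the hypothesis range: it does, because the top degree $a+r+1$ always equals $m+k+1$, independent of $i$. The degenerate case $t=0$, where $C_0\simeq X$, is covered by the first step directly.
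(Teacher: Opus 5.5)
Your proposal is correct and is essentially the paper's proof. Both apply \cref{prop:dell-ext-vanishing} with $r=i+k$ (you take $a=m-i$ directly) to get $\Ext_{\Acat}^{i+k+1}(T,C_i)=0$, then propagate along the conflations $0\to Z_{i+1}\to C_i\to Z_i\to0$; the only difference is that the paper writes this as the chain of injections $\Ext_{\Acat}^{k+1}(T,X)\hookrightarrow\Ext_{\Acat}^{k+t+1}(T,C_t)=0$ rather than as your descending induction.
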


\begin{proof}
Choose an admissibly exact sequence
\[
 0\to C_t\to\cdots\to C_0\to X\to0,
 \qquad t\leq m,
\]
with $(i+k)\text{-}\dell_{\E}C_i\leq m-i$. Write it as conflations
\[
 0\to Z_{i+1}\to C_i\to Z_i\to0
 \qquad(0\leq i<t),
\]
where $Z_0=X$ and $Z_t=C_t$. Put
\[
 r_i=i+k,
 \qquad
 b_i=r_i\text{-}\dell_{\E}C_i\leq m-i.
\]
Then
\[
 b_i+r_i+1\leq m+k+1,
\]
so \cref{prop:dell-ext-vanishing} gives
\[
 \Ext_{\Acat}^{i+k+1}(T,C_i)=0
 \qquad(0\leq i\leq t).
\]
Applying $\Hom_{\Acat}(T,-)$ to the conflations gives injections
\[
 \Ext_{\Acat}^{i+k+1}(T,Z_i)
 \longrightarrow
 \Ext_{\Acat}^{i+k+2}(T,Z_{i+1}).
\]
Iteration yields
\[
 \Ext_{\Acat}^{k+1}(T,X)
 \hookrightarrow
 \Ext_{\Acat}^{k+t+1}(T,C_t)=0.\qedhere
\]
\end{proof}

For $k=1$, \cref{thm:ambient-ext} implies that $\ddell_{\E}X<\infty$ and $\Ext_{\Acat}^j(T,P)=0$ for all $P\in\Proj\E$, $j\geq2$ force $\Ext_{\Acat}^2(T,X)=0$. Since $\ddell_{\E}X\leq\dell_{\E}X$, this contains the finite-relative-delooping implication of \cite[Proposition~4.4]{ChenLiZhangZhao}.

\section{Derived delooping level with support  $\tau$-tilting modules}\label{sec:relative}

Let $A$ be a finite-dimensional basic algebra over an algebraically closed field.
\begin{definition}\label{def:tau-tilting}
	{\rm(\cite{AIR})}
	A module $T\in\modu A$ is $\tau$-rigid if $\Hom_A(T,\tau T)=0$. It is $\tau$-tilting if it is $\tau$-rigid and the number $|T|$ of isomorphism classes of indecomposable direct summands equals $|A|$. It is support $\tau$-tilting if it is $\tau$-tilting over $A/AeA$ for some idempotent $e$. We call $T$ self-orthogonal if
	\[
	\Ext_A^i(T,T)=0\qquad(i\geq1).
	\]
\end{definition}

\begin{proposition}\label{prop:fac-projectives}
	Let $T$ be a support $\tau$-tilting right $A$-module, put $I=\ann_A T$ and $\barA=A/I$. Then $T$ is a $1$-tilting $\barA$-module and $\Fac_A T=\Fac_{\barA}T$ with the same induced exact structure. The exact category $\Fac T$ has enough projective objects and
	\[
	\Proj(\Fac T)=\add T.
	\]
	Moreover, $D({}_{\barA}\barA)\in\Fac T$.
\end{proposition}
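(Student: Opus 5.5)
We reduce everything to the faithful algebra $\barA=A/I$ and then apply standard facts about faithful $\tau$-rigid modules from \cite{AIR}.

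\emph{Step 1: $T$ is $1$-tilting over $\barA$.} Write $T$ as a $\tau$-tilting $A/AeA$-module. Since $e$ annihilates $T$, we have $AeA\subseteq I$, so $\barA$ is a quotient of $A/AeA$.
\begin{itemize}
\item $T$ is $\tau$-rigid over $A$, hence $\tau_{\barA}$-rigid over $\barA$. This uses the standard fact (AIR, Lemma 2.1) that $\tau$-rigidity passes to quotients by ideals annihilating the module.
\item $T$ is a faithful $\barA$-module, and faithful $\tau$-rigid modules are partial tilting (AIR, Proposition 2.2). So $\pd_{\barA}T\le1$ and $\Ext^1_{\barA}(T,T)=0$.
\item For the count: $T$ is support $\tau$-tilting over $\barA$, with $|T|=|A/AeA|$ (here $\tau$-tilting over $A/AeA$ is used). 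Support $\tau$-tilting is preserved under the quotient $A/AeA\to\barA$ because $\Fac T$ is unchanged and is a functorially finite torsion class with $\Ext$-projectives $\add T$. A faithful support $\tau$-tilting module is sincere, so it is $\tau$-tilting over $\barA$. Hence $|T|=|\barA|$, and Bongartz completion shows $T$ is $1$-tilting over $\barA$.
\end{itemize}

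\emph{Step 2: the two exact structures agree.} $\modu\barA$ is a full subcategory of $\modu A$, closed under submodules, quotients and extensions inside $\modu A$ provided the extensions are still annihilated by $I$. Every quotient of $T^n$ is an $\barA$-module, so $\Fac_AT=\Fac_{\barA}T$ as full subcategories. $\Fac T$ is a torsion class in $\modu A$ (AIR), hence extension-closed, and in particular closed under extensions in $\modu A$. A short exact sequence in $\modu A$ with all terms in $\Fac T$ is the same thing as one in $\modu\barA$, so the induced exact structures coincide.

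\emph{Step 3: enough projectives.} Take $M\in\Fac T$. A right $\add T$-approximation $f\colon T_M\to M$ exists and is surjective since $M\in\Fac T$. Its kernel $K$ lies in $\Fac T$: apply $\Hom(T,-)$ and use $\Ext^1_{\barA}(T,T_M)=0$ together with surjectivity of $\Hom(T,f)$ to get $\Ext^1(T,K)=0$. For $1$-tilting $T$ we have $\Fac T=T^{\perp_1}$, so $K\in\Fac T$ and $f$ is a deflation.

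The objects of $\add T$ are projective in $\Fac T$, because $\Ext^1_A(T,\Fac T)=0$; this is the $\tau$-rigidity criterion $\Ext^1(T,\Fac T)=0$ (AIR, Proposition 2.4). Conversely, a projective object $Q$ of $\Fac T$ splits off its deflation $T_Q\to Q$, so $Q\in\add T$.

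\emph{Step 4: $D(\barA)\in\Fac T$.} This is the main point. For $1$-tilting $T$ over $\barA$, the torsion class $\Fac T=T^{\perp_1}$ contains all injective $\barA$-modules, since $\pd T\le1$ forces $\Ext^1_{\barA}(T,\text{injective})=0$. The module $D({}_{\barA}\barA)$ is the injective cogenerator of $\modu\barA$, so it lies in $\Fac T$. I expect the main care to be in Step 1, verifying that $T$ is genuinely $\tau$-tilting (not just $\tau$-rigid) over $\barA$. The fallback is to cite AIR's result that $T$ is a tilting $(A/\ann T)$-module directly.
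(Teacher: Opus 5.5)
Your proof is correct and takes the same route as the paper. The paper simply cites AIR, Proposition~2.2, for the reduction to a $1$-tilting $\barA$-module, and classical tilting theory (ASS, Chapter~VI, Section~2) for $\Fac T=T^{\perp_1}$, $\Proj(\Fac T)=\add T$ and $D({}_{\barA}\barA)\in\Fac T$; you have written out these standard steps in detail. One small correction to Step~4: $\Ext^1_{\barA}(T,E)=0$ for injective $E$ follows from injectivity alone, and the tilting hypothesis is what is needed for the identification $T^{\perp_1}=\Fac T$.
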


\begin{proof}
	This is the standard support $\tau$-tilting reduction together with the classical tilting theorem; see \cite[Proposition~2.2]{AIR} and \cite[Chapter~VI, Section~2]{ASS}. The equality of the induced exact structures follows because all terms are annihilated by $I$.
\end{proof}

\begin{proposition}\label{prop:tor-criterion}
	Let $T$ be a $\tau$-tilting right $A$-module, let $I=\ann_A T$, and put $\barA=A/I$. Then $I$ is nilpotent, $T$ is a $1$-tilting $\barA$-module, and the following are equivalent:
	\begin{enumerate}
		\item $T$ is a $1$-tilting $A$-module;
		\item $\Ext_A^2(T,\Fac T)=0$;
		\item $\Ext_A^2(T,D({}_{\barA}\barA))=0$;
		\item $\Tor_1^A(T_A,{}_{A}I)=0$.
	\end{enumerate}
\end{proposition}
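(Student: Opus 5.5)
First I establish the preliminary facts, then prove the cycle of equivalences (1)$\Rightarrow$(2)$\Rightarrow$(3)$\Rightarrow$(4)$\Rightarrow$(1).

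\emph{Preliminaries.} Since $T$ is $\tau$-tilting and sincere, $\ann_A T$ is nilpotent: a sincere $\tau$-rigid module with $|T|=|A|$ is faithful modulo a nilpotent ideal. Concretely, every simple module lies in $\Fac T$ after passing to the top, so $\barA$ and $A$ have the same simples and $I\subseteq\rad A$. This is the standard fact from \cite{AIR}. That $T$ is $1$-tilting over $\barA$ is \cref{prop:fac-projectives}. I also record $\pd_A T\leq1$ in the $1$-tilting case. Conversely, a $\tau$-tilting module is $1$-tilting precisely when $\pd_A T\leq1$, because $\Ext^1_A(T,T)=0$ already holds by the Auslander--Reiten formula and $|T|=|A|$.

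\emph{(1)$\Rightarrow$(2)$\Rightarrow$(3).} If $\pd_AT\leq1$, then $\Ext^2_A(T,-)=0$, which gives (2). Since $D({}_{\barA}\barA)\in\Fac T$ by \cref{prop:fac-projectives}, (2) implies (3).

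\emph{(3)$\Rightarrow$(4).} I use the change-of-rings identification
\[
\Ext^n_A(T,D(\barA))\simeq D\Tor_n^A(T,\barA).
\]
This holds because $\Hom_A(-,D\barA)\simeq D(-\otimes_A\barA)$ and $D$ is exact. From the exact sequence $0\to I\to A\to\barA\to0$ of left modules, $\Tor_2^A(T,\barA)\simeq\Tor_1^A(T,I)$ since $A$ is flat. Hence (3)$\Leftrightarrow$(4).

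\emph{(4)$\Rightarrow$(1).} This is the main obstacle. The plan is as follows.
\begin{itemize}
\item From (4) and $\Tor_1^A(T,\barA)\simeq\ker(T\otimes I\to T)$, together with $T\otimes_A\barA=T/TI=T$, the map $T\otimes_AI\to T$ has image $TI=0$. So $\Tor_1^A(T,\barA)\simeq T\otimes_AI$.
\item Vanishing of $\Tor_2^A(T,\barA)$ gives $\Ext_A^2(T,D\barA)=0$. Since every module in $\Fac T$ is an $\barA$-module cogenerated by $D\barA$, and injective $\barA$-resolutions are available with $\id_{\barA}\leq1$ relative to $T$, dimension shifting yields $\Ext^2_A(T,M)=0$ for all $\barA$-modules $M$ via $\pd_{\barA}T\leq1$.
\item To pass from this to $\pd_AT\leq1$, I take a minimal projective presentation $P_1\to P_0\to T\to0$ with kernel $\Omega^2T$. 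Then $\Ext^2_A(T,S)=0$ for every simple $S$ (simples are $\barA$-modules because $I$ is nilpotent) gives $\Omega^2T=0$ by minimality.
\end{itemize}
The key delicate point is the vanishing of $\Ext^2_A(T,S)$ for all simple $S$ from its vanishing on $D\barA$. The cleanest route is to embed $S$ into $D\barA$, using that $D\barA$ is an injective cogenerator of $\modu\barA$. From the short exact sequence $0\to S\to D\barA^n\to C\to0$ we get
\[
\Ext^1_A(T,C)\to\Ext^2_A(T,S)\to0.
\]
It then remains to show $\Ext^1_A(T,C)\to\Ext^2_A(T,S)$ is zero. I would instead argue by a right-exactness/$\Tor$ computation: $\Tor_2^A(T,S)$ is a quotient-compatible functor, and $D\Ext^2_A(T,S)\simeq\Tor_2^A(T,DS)$, with $DS$ a left $\barA$-module that is a quotient of $\barA^n$. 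Using $\Tor_3^A(T,-)$ to control the kernel might still fail. So, as a fallback, I would invoke the Ext-description: $\Ext_A^2(T,\Fac T)=0$ together with the tilting theory of $T$ over $\barA$ forces $\Ext^2_A(T,-)$ to vanish on all $\barA$-modules, which are exactly the modules whose simple composition factors we need. This gives $\pd_AT\leq1$.
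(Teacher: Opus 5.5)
Your steps (1)$\Rightarrow$(2)$\Rightarrow$(3) and (3)$\Leftrightarrow$(4) are fine. For the latter you use $\Ext^n_A(T,D\barA)\simeq D\Tor^A_n(T,\barA)$ and $\Tor^A_2(T,\barA)\simeq\Tor^A_1(T,I)$, which is the same computation the paper uses in \cref{thm:finite-range-tor}. The paper gives no argument for this proposition and only cites Chen--Li--Zhang--Zhao, so a self-contained proof would be welcome.

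\textbf{The gap is in (4)$\Rightarrow$(1), which does not close.} You correctly reduce to proving $\Ext^2_A(T,S)=0$ for every simple $S$. You also correctly obtain the surjection $\Ext^1_A(T,C)\to\Ext^2_A(T,S)$ from $0\to S\to(D\barA)^n\to C\to0$. But you then leave the remaining step open, and your fallback is circular. It invokes $\Ext^2_A(T,\Fac T)=0$, which is condition (2), and you never derived (2) from (4). Moreover, the claim that the tilting theory of $T$ over $\barA$ forces $\Ext^2_A(T,-)$ to vanish on $\barA$-modules is unjustified. $\pd_{\barA}T\le1$ controls only $\Ext_{\barA}$, and the change-of-rings map $\Ext^1_{\barA}(T,C)\to\Ext^1_A(T,C)$ is only injective, with cokernel $\Hom_{\barA}(\Tor^A_1(T,\barA),C)$. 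The same objection applies to the ``dimension shifting via $\pd_{\barA}T\le1$'' in your second bullet.

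\textbf{The missing idea is to use $\tau$-rigidity of $T$ over $A$, not just tilting over $\barA$.} By the Auslander--Reiten formula, $\Ext^1_A(T,C)\simeq D\overline{\Hom}_A(C,\tau T)$. If $C\in\Fac T$, say $T^n\twoheadrightarrow C$, then $\Hom_A(C,\tau T)\hookrightarrow\Hom_A(T^n,\tau T)=0$. Hence $\Ext^1_A(T,\Fac T)=0$ (Auslander--Smal\o).

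Since $D\barA\in\Fac T$ and $\Fac T$ is closed under quotients, your cokernel $C$ lies in $\Fac T$. So the sequence $0=\Ext^1_A(T,C)\to\Ext^2_A(T,S)\to\Ext^2_A(T,(D\barA)^n)=0$ gives $\Ext^2_A(T,S)=0$. Running the same argument with an arbitrary $\barA$-module $M$ in place of $S$ gives $\Ext^2_A(T,M)=0$, so you even get (3)$\Rightarrow$(2) directly. Your minimal-resolution argument then yields $\Omega^2_AT=0$, that is, $\pd_AT\le1$.

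\textbf{A smaller point on nilpotency.} Your justification (``every simple module lies in $\Fac T$'') is false as stated: $S_2\notin\Fac T$ in \cref{ex:nontrivial-boundary}. The correct reason is that $\tau$-tilting modules are sincere, and the annihilator of a sincere module over a basic algebra is contained in $\rad A$.
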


\begin{proof}
	This is the right-module form of \cite[Theorems~3.5 and~3.6]{ChenLiZhangZhao}, obtained from the left-module formulation by passage to opposite algebras.
\end{proof}

\begin{proposition}\label{prop:canonical-semibrick}
	Let $T=T_1\oplus\cdots\oplus T_n$ be a basic support $\tau$-tilting right $A$-module, put
	\[
	B=\End_A(T),\qquad \Ccat=\Fac T,
	\]
	and let $S=S_1\oplus\cdots\oplus S_m$ be the canonical semibrick with
	\[
	\Ccat=\Filt(\Fac S).
	\]
	Then
	\[
	F=\Hom_A(T,-):\Ccat\longrightarrow\Sub(DT)
	\]
	is an exact equivalence and induces a bijection between $S_1,\ldots,S_m$ and the simple $B$-modules belonging to $\Sub(DT)$. Put $P_j=F(T_j)$ and $L_j=\topm P_j$. After renumbering, for $m<j\leq n$ one has
	\[
	L_j\notin\Sub(DT),\qquad \rad P_j\in\Sub(DT).
	\]
	Consequently, there is a unique $N_j\in\Ccat$, up to isomorphism, such that
	\[
	F(N_j)\simeq\rad P_j.
	\]
	Under $F$, $\add T$ corresponds to $\operatorname{proj}B$.
\end{proposition}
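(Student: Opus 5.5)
The plan is to reduce everything to the bounded setting of \cref{prop:fac-projectives}. There $T$ is a classical $1$-tilting module over $\barA=A/\ann_A T$, and $\Ccat=\Fac_{\barA}T$ is the torsion class of this tilting module, carrying the same exact structure. Brenner--Butler then gives that $F=\Hom_{\barA}(T,-)$ restricts to an equivalence from $\Ccat$ onto the torsion-free class $\mathcal Y(T)=\Ker\Tor_1^B(-,T)$ in $\modu B$. Since $\Hom_A(T,-)=\Hom_{\barA}(T,-)$ on $\barA$-modules, nothing is lost by working over $\barA$.

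\textbf{Step 1: exactness and the image.} I will identify $\mathcal Y(T)$ with $\Sub(DT)$, which is the standard description of the tilting torsion-free class: $\mathcal Y(T)=\Sub(D T)$ since $D T_B$ is a cogenerator of $\mathcal Y$. $F$ sends conflations in $\Ccat$ to short exact sequences, because $\Ext^1_{\barA}(T,\Ccat)=0$. Conversely, the inverse $-\otimes_B T$ is exact on $\mathcal Y$, since $\Tor_1^B$ vanishes on $\mathcal Y$. Hence $F$ is an exact equivalence. It sends $\add T$ to $\add B_B=\operatorname{proj}B$.

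\textbf{Step 2: simples in $\Sub(DT)$ and the semibrick.} The torsion pair $(\mathcal X,\mathcal Y)$ in $\modu B$ is splitting, because $\gldim$ issues are avoided: a tilting torsion pair on the $B$ side is split when $\barA$ is hereditary, which is not assumed here. So I argue directly instead.

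A simple $B$-module $L$ lies in $\mathcal Y$ or in $\mathcal X$, since its torsion part is a submodule. The simple objects of the exact category $\Ccat$ are the modules in $\Ccat$ with no proper nonzero admissible subobjects in the torsion-class sense. These are exactly the bricks of the canonical semibrick $S$ (Asai's bijection: $\Ccat=\Filt(\Fac S)$, where $S$ consists of the $\Hom$-minimal bricks). Under $F$, a short exact sequence in $\mathcal Y$ corresponds to a conflation in $\Ccat$. So simple objects of the exact category $\mathcal Y$ correspond to those of $\Ccat$.

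A simple $B$-module in $\mathcal Y$ is clearly simple in $\mathcal Y$. Conversely, any $Y\in\mathcal Y$ has a simple $B$-submodule in its socle, and that submodule lies in $\mathcal Y$ because $\mathcal Y$ is closed under submodules. Thus objects of $\mathcal Y$ with no proper $\mathcal Y$-subobjects are simple $B$-modules. For the semibrick side, I will check that $S_i$ is exactly the objects of $\Ccat$ whose images have no proper subobject in $\mathcal Y$. This matching is the main obstacle, and I would compare Asai's characterization of $S$ (each brick $S_i=T_i/\rad_{\End}$-trace) with $F(S_i)=\topm$-type modules.

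\textbf{Step 3: the remaining indices.} For $L_j\notin\Sub(DT)$, the module $\rad P_j$ lies in $\mathcal Y$, because $\mathcal Y$ is closed under submodules and $P_j\in\mathcal Y$. Counting via the bijection gives exactly $m$ simples in $\mathcal Y$, and renumbering puts them first. Since $F$ is an equivalence onto $\mathcal Y$, the object $N_j:=\rad P_j\otimes_B T$ is unique up to isomorphism with $F(N_j)\simeq\rad P_j$.
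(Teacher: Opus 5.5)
Your Step 1 is correct. It is exactly the Brenner--Butler half of the paper's citation: over $\barA$, the functor $F$ is an exact equivalence $\Ccat\to\mathcal Y(T)=\Sub(DT)$ sending $\add T$ to $\operatorname{proj}B$. The opening sentence of Step 2, asserting that the torsion pair in $\modu B$ splits, is false in general. You abandon it, so just drop it.

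\textbf{The gap is in Step 2.} This step is the real content of the cited theorem of Xu--Zhang. You never prove that $F$ matches the bricks of $S$ with the simple $B$-modules in $\Sub(DT)$; you label this ``the main obstacle''. Moreover, the framework you set up for it fails at both ends.

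\emph{Admissible subobjects differ on the two sides.} An exact equivalence transports \emph{admissible} subobjects. In the quotient-closed category $\Ccat$, these are just submodules lying in $\Ccat$. In the submodule-closed category $\Sub(DT)$, an admissible subobject $Y'\subseteq Y$ also needs $Y/Y'\in\Sub(DT)$. Your socle argument never checks this, so it does not show that simple objects of the exact category $\Sub(DT)$ are simple $B$-modules.

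\emph{Simple objects of $\Ccat$ are not the bricks of $S$.} Note that $\Ccat=\Filt(\Fac S)$, not $\Filt(S)$.

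\emph{A counterexample to both.} The paper's \cref{ex:nontrivial-boundary} refutes both identifications.
\begin{itemize}
\item There $\Ccat=\add\{P_1,S_1,M_1\}$ and $S=M_1$. Yet the simple $A$-module $S_1$ is also a simple object of $\Ccat$. Since $\Hom_A(M_1,S_1)\neq0$, the set $\{M_1,S_1\}$ is not even a semibrick.
\item Correspondingly, $F(S_1)\cong Q_2$ is a simple object of the exact category $\Sub(DT)=\add\{Q_1,Q_2,L_1\}$. Its only nonzero proper submodule $L_1$ has cokernel $L_2\notin\Sub(DT)$. But $Q_2$ is not a simple $B$-module.
\end{itemize}

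\textbf{What is missing is a short direct computation.}
\begin{enumerate}
\item Apply the right exact functor $-\otimes_BT$ to $\rad P_j\to P_j\to L_j\to0$. This gives $L_j\otimes_BT\cong\overline T_j:=T_j/\sum_{f\in\rad(T,T_j)}\operatorname{Im}f$. By Asai, the nonzero $\overline T_j$ are precisely the bricks of $S$, since $T/\rad_{\End(T)}T=\bigoplus_j\overline T_j$.
\item If $L_j\in\Sub(DT)$, then $L_j\cong F(L_j\otimes_BT)=F(\overline T_j)$, so $\overline T_j\neq0$.
\item Conversely, suppose $\overline T_j\neq0$. Its defining kernel is a trace of $T$, hence lies in $\Fac T$, and $\Ext_A^1(T,\Fac T)=0$. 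So every map $T_i\to\overline T_j$ lifts to some $g\colon T_i\to T_j$. If $g$ is a radical map, its image lies in the kernel, so the original map is zero. Otherwise $i=j$ and $g$ is a scalar plus a radical endomorphism.
\item Hence $\Hom_A(T,\overline T_j)$ is one-dimensional and supported at $j$. That is, $F(\overline T_j)\cong L_j\in\Sub(DT)$.
\end{enumerate}
This yields the bijection in the indexed form $F(S_i)\cong L_i$ for $1\le i\le m$, which is what the proof of \cref{thm:relative-main} actually uses.

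With this in place, your Step 3 goes through as written: use submodule-closure of $\Sub(DT)$ for $\rad P_j$, count via the bijection, and set $N_j=\rad P_j\otimes_BT$.
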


\begin{proof}
	See \cite[Theorem~4.1 and Definition~4.4]{XuZhang}, together with the Brenner--Butler equivalence over $\barA$.
\end{proof}

\begin{definition}\label{def:depth}
	{\rm(\cite[Definition~3.1]{XuZhang})}
	With the notation of \cref{prop:canonical-semibrick}, define
	\[
	\grade_T M=\inf\{i\geq0\mid \Ext_A^i(M,T)\neq0\}
	\qquad(M\in\Ccat)
	\]
	and
	\[
	\depth_T\Ccat=\sup_{1\leq i\leq m}\grade_T S_i.
	\]
\end{definition}

Since $\ann_A T$ acts trivially on $T$, one has $\End_A(T)=\End_{\barA}(T)$. The conventions from Section~2 include the case $T=0$: then $B=0$, $\Ccat=0$, $m=n=0$, and every algebra, depth and canonical relative invariant below is zero.

\begin{definition}\label{def:ordinary-relative}
{\rm(\cite[Definition~4.4]{XuZhang})}
Let $\Ccat=\Fac T$ and $k\geq1$. 
The canonical relative $k$-delooping level of $\Ccat$ with respect to $T$ is
\begin{equation}\label{eq:relative-dell}
 k\text{-}\dell_T\Ccat
 =\max\left\{
 \begin{array}{ll}
 k\text{-}\dell_{\Ccat} S_i,&1\leq i\leq m,\\[2pt]
 1+(k+1)\text{-}\dell_{\Ccat} N_j,&m<j\leq n.
 \end{array}
 \right.
\end{equation}
If $m=n$, the second family is omitted. The global objectwise relative level is
\[
 \operatorname{gl}\text{-}k\text{-}\dell_T\Ccat
 =\sup\{k\text{-}\dell_{\Ccat} M\mid M\in\Ccat\}.
\]
\end{definition}

\begin{definition}\label{def:relative-ddell}
Let $\Ccat=\Fac T$ and $k\geq1$.
The \emph{canonical relative $k$-derived delooping level} of $\Ccat$ with respect to $T$ is
\begin{equation}\label{eq:relative-ddell}
 k\text{-}\ddell_T\Ccat
 =\max\left\{
 \begin{array}{ll}
 k\text{-}\ddell_{\Ccat} S_i,&1\leq i\leq m,\\[2pt]
 1+(k+1)\text{-}\ddell_{\Ccat} N_j,&m<j\leq n.
 \end{array}
 \right.
\end{equation}
For $k=1$, we write  $\ddell_T\Ccat$.
\end{definition}

\smallskip
\noindent\emph{Independence of choices.}
By Krull--Schmidt uniqueness, the indecomposable summands of a basic support $\tau$-tilting module are determined up to permutation and isomorphism. The canonical semibrick is unique in the same sense \cite[Theorem~4.1]{XuZhang}; under the equivalence $F$, the projectives $P_j=F(T_j)$ and their radicals are therefore determined up to isomorphism, and each $N_j$ with $F(N_j)\simeq\rad P_j$ is unique up to isomorphism. Thus the maxima in \eqref{eq:relative-dell} and \eqref{eq:relative-ddell} depend only on the isomorphism class of the basic representative of $T$.

\begin{remark}\label{rem:regular-case}
If $T=A$, then $\Ccat=\modu A$, the canonical semibrick is the direct sum of the simple $A$-modules, and the family $N_j$ is empty. Hence
\[
 k\text{-}\ddell_A(\modu A)=k\text{-}\ddell A,
 \qquad
 k\text{-}\dell_A(\modu A)=k\text{-}\dell A.
\]
Thus \cref{def:relative-ddell} recovers the derived delooping level of Guo and Igusa.
\end{remark}

\begin{proposition}\label{prop:ambient-relative-one-way}
For every $X\in\Scat=\Sub(DT)$ and $k\geq1$,
\begin{equation*}\label{eq:ambient-relative-one-way}
 k\text{-}\dell_BX\leq k\text{-}\dell_{\Scat}X,
 \qquad
 k\text{-}\ddell_BX\leq k\text{-}\ddell_{\Scat}X.
\end{equation*}
\end{proposition}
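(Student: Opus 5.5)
The plan is to compare the two exact structures directly. The key input is that the inclusion $\iota:\Scat=\Sub(DT)\hookrightarrow\modu B$ is an exact functor with $\Proj\Scat=\operatorname{proj}B$. The inclusion is exact because $\Scat$ carries the exact structure induced from $\modu B$. The equality of projectives comes from \cref{prop:canonical-semibrick}: under the exact equivalence $F$, $\add T=\Proj\Ccat$ corresponds to $\operatorname{proj}B$. In particular every projective $B$-module lies in $\Scat$ and is projective there.

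Hence $\pd(\iota)=0$, and \cref{thm:transfer} with $c=0$ gives both inequalities at once:
\[
k\text{-}\dell_BX\leq k\text{-}\dell_{\Scat}X,\qquad k\text{-}\ddell_BX\leq k\text{-}\ddell_{\Scat}X.
\]

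If one prefers a direct argument, it runs as follows.

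\emph{Syzygies agree.} A projective conflation $0\to\Omega_{\Scat}X\to P\to X\to0$ in $\Scat$ is a short exact sequence in $\modu B$ with $P$ projective. So $\Omega_{\Scat}X$ is a $B$-syzygy of $X$, and by \cref{lem:schanuel} the two agree up to projective summands. Iterating, $\Omega^n_{\Scat}X\simeq\Omega^n_BX$ stably.

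\emph{Stable retracts transfer.} A morphism in $\Scat$ factoring through an object of $\Proj\Scat$ factors through a projective $B$-module. Therefore the identity on objects induces a functor $\stable{\Scat}\to\stable{\modu B}$, and a stable retraction $\Omega^n_{\Scat}X\rightleftarrows\Omega^{n+k}_{\Scat}Y$ in $\stable{\Scat}$ maps to one in $\stable{\modu B}$. This gives the $\dell$ inequality.

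\emph{Derived version.} An admissibly exact sequence in $\Scat$ is admissibly exact in $\modu B$, and each term satisfies $(i+k)\text{-}\dell_BC_i\leq(i+k)\text{-}\dell_{\Scat}C_i\leq m-i$ by the previous step. So any witness for $k\text{-}\ddell_{\Scat}X$ is also a witness for $k\text{-}\ddell_BX$.

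The only point needing care is the identification $\Proj\Scat=\operatorname{proj}B$, specifically that $\operatorname{proj}B\subseteq\Scat$. This follows because $B=F(T)$ and $F$ lands in $\Sub(DT)$. It can also be seen directly: $B=\Hom_A(T,T)\hookrightarrow\Hom_A(T,DDT)$, and $\Hom_A(T,DDT)$ is a direct sum of copies of $DT$ since $T\in\add$ of a cogenerator image. I would simply cite \cref{prop:canonical-semibrick} for this.
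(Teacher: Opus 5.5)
Your argument is correct and is the paper's proof: the inclusion $\Scat\hookrightarrow\modu B$ is exact, \cref{prop:canonical-semibrick} gives $\Proj\Scat=\operatorname{proj}B$, hence $\pd(\iota)=0$, and \cref{thm:transfer} with $c=0$ yields both inequalities; your direct unwinding is just that theorem's proof specialized to $c=0$. One caveat about your optional aside: $\Hom_A(T,DDT)\simeq\Hom_A(T,T)=B$ is not in general a direct sum of copies of $DT$, so that justification is wrong. A correct direct check chooses an inflation $T\hookrightarrow(D({}_{\barA}\barA))^{\oplus r}$ in $\Fac T$ and applies $F$, using $F(D({}_{\barA}\barA))\simeq DT$; since you cite \cref{prop:canonical-semibrick} anyway, this does not affect the proof.
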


\begin{proof}
By \cref{prop:canonical-semibrick}, $\Proj\Ccat=\add T$ corresponds to $\operatorname{proj}B$. Hence the projective objects of $\Scat$ are precisely the projective $B$-modules. The inclusion
\[
 \iota:\Scat\longrightarrow\modu B
\]
is exact. Apply \cref{thm:transfer}, the result can be obtained.
\end{proof}

\begin{theorem}\label{thm:relative-main}
With the preceding notation, for every $k\geq1$,
\begin{equation*}\label{eq:relative-main}
 k\text{-}\ddell B\leq k\text{-}\ddell_T\Ccat
 \leq k\text{-}\dell_T\Ccat.
\end{equation*}
For $k=1$,
\begin{equation*}\label{eq:dimension-chain}
 \depth_T\Ccat\leq\findim B^{\op}\leq\Findim B^{\op}
 \leq\ddell B\leq\ddell_T\Ccat\leq\dell_T\Ccat.
\end{equation*}
\end{theorem}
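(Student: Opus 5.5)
The second inequality $k\text{-}\ddell_T\Ccat\leq k\text{-}\dell_T\Ccat$ is termwise. I compare the two maxima in \eqref{eq:relative-ddell} and \eqref{eq:relative-dell} entry by entry. For the semibrick entries, \cref{prop:basic-properties}(1) gives $k\text{-}\ddell_{\Ccat}S_i\leq k\text{-}\dell_{\Ccat}S_i$. For the boundary entries, the same statement with $k+1$ gives $(k+1)\text{-}\ddell_{\Ccat}N_j\leq(k+1)\text{-}\dell_{\Ccat}N_j$, and adding one preserves this.

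For the first inequality I bound $k\text{-}\ddell_BL$ for every simple $B$-module $L$, splitting into two cases via \cref{prop:canonical-semibrick}.

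\emph{Simples in $\Sub(DT)$.} These are $L\simeq F(S_i)$ with $1\leq i\leq m$. Since $F:\Ccat\to\Sub(DT)$ is an exact equivalence, \cref{thm:transfer} with amplitude zero, applied in both directions, gives $k\text{-}\ddell_{\Sub(DT)}F(S_i)=k\text{-}\ddell_{\Ccat}S_i$. Then \cref{prop:ambient-relative-one-way} yields $k\text{-}\ddell_BL\leq k\text{-}\ddell_{\Ccat}S_i$.

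\emph{Simples not in $\Sub(DT)$.} These are $L=L_j$ with $m<j\leq n$. Here $\Omega_BL_j=\rad P_j\simeq F(N_j)$, so \cref{lem:ddell-syzygy-step} gives
\[
k\text{-}\ddell_BL_j\leq1+(k+1)\text{-}\ddell_B(\rad P_j).
\]
Applying the same chain (exact equivalence, then \cref{prop:ambient-relative-one-way}) with $k+1$ in place of $k$ bounds the right-hand side by $1+(k+1)\text{-}\ddell_{\Ccat}N_j$.

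Every simple $B$-module is $L_j=\topm P_j$ for some $j$, so the two cases exhaust them. Taking the supremum gives $k\text{-}\ddell B\leq k\text{-}\ddell_T\Ccat$. I must check that $\rad P_j$ really is a first syzygy of $L_j$ in $\modu B$ computed through the projective $P_j$; this holds because $P_j$ is the projective cover, and stable invariance (\cref{prop:basic-properties}(2)) absorbs any choice.

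For the dimension chain at $k=1$, the last two inequalities are the case just proved. The inequality $\findim B^{\op}\leq\Findim B^{\op}$ holds by definition. The inequality $\Findim B^{\op}\leq\ddell B$ is Guo--Igusa's theorem \cite{GuoIgusa}, $\Findim B^{\op}=\operatorname{edell}B\leq\ddell B$, which I will quote directly. The remaining inequality $\depth_T\Ccat\leq\findim B^{\op}$ is \cite{XuZhang}, via the grades of the $S_i$ against $T$.

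The main obstacle is keeping this citation honest. The natural path is the identification $\Ext^i_A(S_i,T)\simeq\Ext^i_{B^{\op}}(\ldots)$ through Brenner--Butler, which turns the grade of $S_i$ into the projective dimension of a finitely generated $B^{\op}$-module of finite projective dimension. I will simply cite the result established there rather than reprove it.
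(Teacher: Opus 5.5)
Your proposal is correct and follows the paper's proof essentially step for step. It uses the same split of the simple $B$-modules into $F(S_i)$ ($1\leq i\leq m$) and the boundary simples $L_j$ ($m<j\leq n$), the same use of \cref{lem:ddell-syzygy-step} with $\Omega_BL_j\simeq\rad P_j\simeq F(N_j)$, transfer along $F$ combined with \cref{prop:ambient-relative-one-way}, and the same citations to Xu--Zhang and Guo--Igusa for the dimension chain. Your termwise argument for $k\text{-}\ddell_T\Ccat\leq k\text{-}\dell_T\Ccat$ via \cref{prop:basic-properties}(1), applied with $k$ and with $k+1$, makes explicit a step that the paper leaves implicit.
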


\begin{proof}
Let $L_1,\ldots,L_n$ be the simple right $B$-modules which are the tops of the indecomposable projectives $P_j=F(T_j)$.

For $1\leq i\leq m$, \cref{prop:canonical-semibrick} gives
$L_i\simeq F(S_i)\in\Scat$. By \cref{thm:transfer,prop:ambient-relative-one-way},
\begin{equation*}\label{eq:inside-simple}
 k\text{-}\ddell_B L_i
 \leq k\text{-}\ddell_{\Scat}L_i
 =k\text{-}\ddell_{\Ccat} S_i.
\end{equation*}
For $m<j\leq n$, the projective-cover sequence is
\[
 0\longrightarrow\rad P_j\longrightarrow P_j\longrightarrow L_j\longrightarrow0,
\]
so
\begin{equation*}\label{eq:boundary-syzygy}
 \Omega_BL_j\simeq\rad P_j\simeq F(N_j).
\end{equation*}
By \cref{lem:ddell-syzygy-step},
\[
 k\text{-}\ddell_BL_j
 \leq1+(k+1)\text{-}\ddell_B(\rad P_j).
\]
Using \cref{prop:ambient-relative-one-way,thm:transfer},
\[
 (k+1)\text{-}\ddell_B(\rad P_j)
 \leq(k+1)\text{-}\ddell_{\Scat}F(N_j)
 =(k+1)\text{-}\ddell_{\Ccat}N_j.
\]
Therefore
\begin{equation*}\label{eq:outside-simple}
 k\text{-}\ddell_BL_j
 \leq1+(k+1)\text{-}\ddell_{\Ccat}N_j.
\end{equation*}
Thus the first inequality holds. Xu and Zhang proved
$\depth_T\Ccat\leq\findim B^{\op}$ \cite[Theorem~3.4]{XuZhang}. The inequality
$\findim B^{\op}\leq\Findim B^{\op}$ is immediate, and Guo and Igusa proved
$\Findim B^{\op}\leq\ddell B$ \cite[Theorem~1.1]{GuoIgusa}. Combining these with the first inequality gives the second inequality.
\end{proof}

\begin{remark}\label{rem:relative-main-literature}
For $T=A$,  \cref{thm:relative-main} reduces to the Guo--Igusa inequality $\Findim A^{\op}\leq\ddell A\leq\dell A$ \cite[Theorem~1.1]{GuoIgusa}.
\end{remark}

\begin{remark}\label{rem:test-objects}
The test objects in \cref{def:relative-ddell} encode all simple $B$-modules. The bricks $S_i$ account for the simples contained in $\Sub(DT)$, whereas the modules $N_j$ account for the radicals of projective covers of the remaining simples. Thus $\ddell_T\Ccat$ is not the supremum of objectwise relative derived delooping levels; it is the canonical quantity required to control the opposite endomorphism algebra. The boundary contribution can be genuinely nonprojective; see \cref{ex:nontrivial-boundary}.
\end{remark}

We next identify the role of an injective cogenerator.

\begin{definition}\label{def:finite-sum-cogenerator}
Let $\E$ be an exact category. An object $I\in\E$ is a \emph{finite-sum admissible cogenerator} if, for every $X\in\E$, there are an integer $r\geq1$ and an inflation
\[
 X\hookrightarrow I^{\oplus r}.
\]
Define
\begin{equation*}\label{eq:gddell}
 \gddell(\E)=\sup\{\ddell_{\E}X\mid X\in\E\}.
\end{equation*}
\end{definition}

\begin{theorem}\label{thm:cogenerator}
Let $I$ be a finite-sum admissible cogenerator of $\E$. Then
\begin{equation*}\label{eq:cogenerator-bound}
 \ddell_{\E}I\leq\gddell(\E)
 \leq\ddell_{\E}I+1.
\end{equation*}
Consequently, the following are equivalent:
\begin{enumerate}
\item $\ddell_{\E}I<\infty$;
\item $\gddell(\E)<\infty$;
\item $\Fddell(\E)=\E$.
\end{enumerate}
\end{theorem}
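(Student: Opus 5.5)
The lower bound $\ddell_{\E}I\leq\gddell(\E)$ holds because $\gddell(\E)$ is a supremum over all objects of $\E$, and $I$ is one of them. For the upper bound I assume $d=\ddell_{\E}I<\infty$, since otherwise there is nothing to prove. The plan is to show that every $X\in\E$ satisfies $\ddell_{\E}X\leq d+1$.

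\textbf{Step 1: finite sums of $I$.} By \cref{lem:finite-sums} with $k=1$, every finite direct sum satisfies
\[
\ddell_{\E}(I^{\oplus r})\leq\max_j\ddell_{\E}I=d .
\]

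\textbf{Step 2: arbitrary objects.} Fix $X\in\E$. The cogenerator hypothesis gives an inflation $X\rightarrowtail I^{\oplus r}$ for some $r\geq 1$. \Cref{cor:subobject} then yields
\[
\ddell_{\E}X\leq\ddell_{\E}(I^{\oplus r})+1\leq d+1 .
\]
Taking the supremum over $X$ gives $\gddell(\E)\leq d+1$.

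\textbf{Step 3: the equivalences.}
\begin{itemize}
\item (1)$\Rightarrow$(2) is the upper bound just proved.
\item (2)$\Rightarrow$(3) holds because every object then has finite $\ddell$, i.e.\ lies in $\Fddell(\E)$.
\item (3)$\Rightarrow$(1) holds because $I\in\E=\Fddell(\E)$.
\end{itemize}

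The only step carrying real content is Step 2. It rests on \cref{cor:subobject}, which in turn was derived from \cref{thm:extension-estimate} applied to the conflation from \cref{lem:syzygy-shift}; that work is already done, so nothing new is needed here. The one point to check is that \cref{cor:subobject} really requires only an inflation, not a split monomorphism, so that the finite-sum admissible cogenerator hypothesis supplies exactly what it needs. Step 1 depends only on \cref{lem:finite-sums}, whose proof sums stable retractions after aligning the levels.
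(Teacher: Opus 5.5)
Your proposal is correct and follows the paper's proof essentially verbatim: the lower bound is immediate from the definition of $\gddell(\E)$, and the upper bound combines \cref{lem:finite-sums} for $I^{\oplus r}$ with \cref{cor:subobject} applied to the inflation $X\rightarrowtail I^{\oplus r}$. Your cycle of implications $(1)\Rightarrow(2)\Rightarrow(3)\Rightarrow(1)$ spells out the routine argument for the equivalences, which the paper leaves implicit.
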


\begin{proof}
The first inequality is immediate. If $\ddell_{\E}I=\infty$, the upper inequality is tautological. Assume therefore that $d=\ddell_{\E}I<\infty$, and let $X\in\E$. Choose an inflation
\[
 X\hookrightarrow I^{\oplus r}.
\]
By \cref{lem:finite-sums}, $\ddell_{\E}(I^{\oplus r})\leq d$, and \cref{cor:subobject} gives
$\ddell_{\E}X\leq d+1$. Taking the supremum, we have  the inequality.
\end{proof}

\begin{proposition}\label{prop:DbarA-cogenerator}
Let $T$ be a support $\tau$-tilting right $A$-module, put
$\barA=A/\ann_A T$ and $\Ccat=\Fac T$. Then
$D({}_{\barA}\barA)$ is an injective finite-sum admissible cogenerator of $\Ccat$.
\end{proposition}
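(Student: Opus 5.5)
We must show three things: $D({}_{\barA}\barA)\in\Ccat$; it is Ext-injective in $\Ccat$ (so "injective" in the exact category $\Ccat$); and every $X\in\Ccat$ admits an inflation (in the induced exact structure of $\Ccat$) into a finite direct sum of copies of $D({}_{\barA}\barA)$. By \cref{prop:fac-projectives}, $\Fac_AT=\Fac_{\barA}T$ with the same induced exact structure, and $D({}_{\barA}\barA)\in\Fac T$. So we may work entirely over $\barA$, where $T$ is a classical $1$-tilting module and $\Ccat$ is the torsion class $\Fac T=\{M\mid\Ext^1_{\barA}(T,M)=0\}$.

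\emph{Injectivity.} Write $Q=D({}_{\barA}\barA)$; this is the injective cogenerator of $\modu\barA$. For any conflation $0\to X\to Y\to Z\to0$ in $\Ccat$, it is a short exact sequence of $\barA$-modules. Hence $\Hom_{\barA}(-,Q)$ is exact on it. Since $\Hom_A=\Hom_{\barA}$ on modules annihilated by $I$, the object $Q$ is injective relative to the exact structure of $\Ccat$. Equivalently, $\Ext^1_{\Ccat}(Z,Q)\simeq\Ext^1_{\barA}(Z,Q)=0$, because $\Ccat$ is extension closed in $\modu\barA$.

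\emph{Cogeneration.} Let $X\in\Ccat$. Take the injective envelope of $X$ over $\barA$, which is a monomorphism $\iota:X\hookrightarrow Q^{\oplus r}$ for some $r\geq1$. Such an $r$ exists because $Q$ contains every indecomposable injective $\barA$-module as a summand and $X$ has finite length. Its cokernel $Z$ is a quotient of $Q^{\oplus r}$. Since $Q^{\oplus r}\in\Fac T$ and $\Fac T$ is closed under quotients, $Z\in\Ccat$. Thus $0\to X\to Q^{\oplus r}\to Z\to0$ is a short exact sequence with all three terms in $\Ccat$. By the definition of the induced exact structure on the extension-closed subcategory $\Ccat$, it is a conflation, so $\iota$ is an inflation in $\Ccat$.

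\emph{Main obstacle.} The only delicate point is that the inflation must be admissible in $\Ccat$, not merely a monomorphism of modules. This is exactly what closure of the torsion class $\Fac T$ under quotients supplies, together with $Q\in\Fac T$ from \cref{prop:fac-projectives}. That membership follows from the fact that a tilting torsion class contains all injective $\barA$-modules, since $\Ext^1_{\barA}(T,Q)=0$.
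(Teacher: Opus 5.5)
Your proposal is correct and follows the paper's argument: $D({}_{\barA}\barA)$ lies in $\Fac T=T^{\perp_1}$, every $X\in\Ccat$ embeds into some $D({}_{\barA}\barA)^{\oplus r}$ in $\modu\barA$, and because $\Fac T$ is closed under quotients the cokernel lies in $\Ccat$, so the monomorphism is an inflation. You also check injectivity relative to the exact structure of $\Ccat$, which the paper leaves implicit. One wording fix: the injective envelope of $X$ is only a direct summand of $D({}_{\barA}\barA)^{\oplus r}$, so $\iota$ should be the envelope followed by the split inclusion, as your multiplicity remark already suggests.
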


\begin{proof}
By \cref{prop:fac-projectives}, $T$ is a $1$-tilting $\barA$-module. In $\modu\barA$, its tilting torsion class is
\[
 \Fac T=T^{\perp_1}
 =\{X\in\modu\barA\mid\Ext_{\barA}^1(T,X)=0\}.
\]
Since $D({}_{\barA}\barA)$ is injective, it belongs to $\Fac T$. For $X\in\Fac T$, choose a monomorphism
\[
 X\longrightarrow \bigl(D({}_{\barA}\barA)\bigr)^{\oplus r}
\]
in $\modu\barA$. Its cokernel belongs to $\Fac T$ because $\Fac T$ is quotient closed. Hence the monomorphism is an inflation for the exact structure induced on $\Ccat$.
\end{proof}

\begin{theorem}\label{thm:DbarA-control}
Let $T$ be a support $\tau$-tilting right $A$-module, put
$\barA=A/\ann_A T$ and $\Ccat=\Fac T$. Then
\begin{equation*}\label{eq:DbarA-control}
 \ddell_{\Ccat}(D({}_{\barA}\barA))
 \leq\gddell(\Ccat)
 \leq\ddell_{\Ccat}(D({}_{\barA}\barA))+1.
\end{equation*}
In particular,
\[
 \ddell_{\Ccat}(D({}_{\barA}\barA))<\infty
 \Longleftrightarrow
 \gddell(\Ccat)<\infty
 \Longleftrightarrow
 \Fddell(\Ccat)=\Ccat.
\]
\end{theorem}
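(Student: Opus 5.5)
The statement is a direct specialization of \cref{thm:cogenerator} to the exact category $\E=\Ccat=\Fac T$ with its exact structure induced from $\modu A$. So the plan is to check that this category and the object $I=D({}_{\barA}\barA)$ satisfy the hypotheses of \cref{thm:cogenerator}, and then read off both the inequalities and the equivalences.

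\textbf{Step 1: the ambient hypotheses.} First I would verify that $\Ccat$ meets the standing assumptions of \cref{sec:calculus}: it is essentially small, idempotent complete, and has enough projective objects.
\begin{itemize}
\item Essential smallness is clear, since $\Ccat$ is a full subcategory of $\modu A$.
\item Idempotent completeness holds because $\Fac T$ is closed under direct summands in the Krull--Schmidt category $\modu A$: a summand of a quotient of $T^{\oplus r}$ is again such a quotient.
\item Enough projectives, with $\Proj\Ccat=\add T$, is exactly \cref{prop:fac-projectives}.
\end{itemize}
The exact structure is the one induced from $\modu A$, and $\Fac T$ is extension closed, being a torsion class. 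Hence $\Ccat$ is a legitimate exact category in the sense used throughout.

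\textbf{Step 2: the cogenerator hypothesis.} \Cref{prop:DbarA-cogenerator} says that $D({}_{\barA}\barA)$ lies in $\Ccat$ and is a finite-sum admissible cogenerator. Concretely, every $X\in\Ccat$ admits an inflation $X\hookrightarrow D({}_{\barA}\barA)^{\oplus r}$ whose cokernel stays in $\Fac T$. The one point to check is that this monomorphism, taken in $\modu\barA$, is still a conflation in the structure induced from $\modu A$. This holds because $\Fac_A T=\Fac_{\barA}T$ carry the same induced exact structure, again by \cref{prop:fac-projectives}.

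\textbf{Step 3: conclusion.} Apply \cref{thm:cogenerator} with $I=D({}_{\barA}\barA)$. This gives
\[
\ddell_{\Ccat}(D({}_{\barA}\barA))\leq\gddell(\Ccat)\leq\ddell_{\Ccat}(D({}_{\barA}\barA))+1,
\]
together with the equivalence of $\ddell_{\Ccat}(D({}_{\barA}\barA))<\infty$, $\gddell(\Ccat)<\infty$ and $\Fddell(\Ccat)=\Ccat$.

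For completeness I would recall why that theorem yields the equivalences. The two inequalities show that the first two conditions are equivalent. Also, $\gddell(\Ccat)<\infty$ clearly implies $\Fddell(\Ccat)=\Ccat$. Conversely, if $\Fddell(\Ccat)=\Ccat$ then in particular $\ddell_{\Ccat}(D({}_{\barA}\barA))<\infty$, which returns us to the first condition.

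\textbf{Main obstacle.} The only delicate point is bookkeeping: making sure that "inflation" means inflation for the induced exact structure on $\Ccat$, so that \cref{cor:subobject} applies inside $\Ccat$. This is exactly what \cref{prop:DbarA-cogenerator} provides.

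\textbf{Degenerate case.} If $T=0$, then $\barA=0$ and $\Ccat=0$, so every quantity in the statement is $0$ and the claim holds trivially.
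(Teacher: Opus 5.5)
Your proposal is correct and matches the paper's proof: the paper's argument is the same one-line application of \cref{thm:cogenerator} to the cogenerator $D({}_{\barA}\barA)$ supplied by \cref{prop:DbarA-cogenerator}. Your extra checks (that $\Ccat$ is idempotent complete with enough projectives, and that the monomorphism in $\modu\barA$ is an inflation for the structure induced from $\modu A$) just spell out standing hypotheses the paper leaves implicit.
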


\begin{proof}
Apply \cref{thm:cogenerator} to \cref{prop:DbarA-cogenerator}.
\end{proof}

\begin{remark}\label{rem:cogenerator-literature}
The injective module $D({}_{\barA}\barA)$ is the same distinguished object that appears in the Ext--Tor characterization of $1$-tilting modules in \cite[Theorems~3.5 and 3.6]{ChenLiZhangZhao}. \Cref{thm:DbarA-control} supplies an exact-categorical explanation: finiteness for this single cogenerator is equivalent, up to an additive constant one, to a uniform derived-delooping bound on the whole torsion class $\Fac T$.
\end{remark}


\begin{assumption}\label{ass:localization}
Let $\Acat$ be an essentially small idempotent-complete exact category with enough projective objects. Let $\Xcat\subseteq\Acat$ be a full resolving subcategory, equipped with the exact structure restricted from $\Acat$. Assume that the inclusion identifies the full subcategory $\Proj\Xcat$ with $\Proj\Acat\subseteq\Xcat$ and that, for some integer $c\geq0$,
\begin{equation*}\label{eq:c-syzygy-in-X}
 \Omega_{\Acat}^{c}M\in\Xcat
 \qquad\text{for every }M\in\Acat.
\end{equation*}
\end{assumption}


\begin{remark}Suppose that $\Acat=\modu B$ and that $\Xcat$ is extension closed, submodule closed and contains $\operatorname{proj}B$. The kernel of a projective cover is a submodule of a projective module. Thus the  \Cref{ass:localization} is automatic for the Brenner--Butler torsion-free class $\Sub(DT)$ with $c=1$.
\end{remark}

\begin{lemma}\label{lem:ordinary-localization}
Under \cref{ass:localization}, let $M\in\Acat$ and $r\geq1$. Then
\begin{equation*}\label{eq:ordinary-localization}
 r\text{-}\dell_{\Xcat}(\Omega_{\Acat}^{c}M)
 \leq r\text{-}\dell_{\Acat}M.
\end{equation*}
\end{lemma}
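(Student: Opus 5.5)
The plan is to carry the $\Acat$-stable retraction that witnesses $r\text{-}\dell_{\Acat}M$ over to $\Xcat$ after applying $\Omega_{\Acat}^{c}$. This rests on two compatibilities between $\Xcat$ and $\Acat$ under \cref{ass:localization}, which I would establish first.

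\emph{(a) Syzygies agree.} For $X\in\Xcat$ and every $n\geq0$, one has $\Omega_{\Xcat}^{n}X\simeq\Omega_{\Acat}^{n}X$ up to projective summands. To see this, take a projective conflation $0\to K\to P\to X\to0$ in $\Acat$ with $P\in\Proj\Acat=\Proj\Xcat$. Because $\Xcat$ is resolving, it is closed under kernels of deflations between its objects, so $K\in\Xcat$. Hence this is a conflation of the restricted exact structure, and it is a projective conflation of $\Xcat$. Iterating and invoking \cref{lem:schanuel} in both categories gives the claim.

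\emph{(b) Stable morphisms agree.} For $X,X'\in\Xcat$, a morphism $X\to X'$ factors through an object of $\Proj\Xcat$ if and only if it factors through an object of $\Proj\Acat$, since the two classes coincide. So $\stable{\Xcat}$ is a full subcategory of $\stable{\Acat}$: its stable $\Hom$ groups, compositions and identities are those of $\stable{\Acat}$. In particular, a stable retraction in $\stable{\Acat}$ between two objects of $\Xcat$ is already a stable retraction in $\stable{\Xcat}$.

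With these in hand, let $n=r\text{-}\dell_{\Acat}M<\infty$; there is nothing to prove if $n=\infty$. Choose $Y\in\Acat$ and stable morphisms $u\colon\Omega_{\Acat}^{n}M\to\Omega_{\Acat}^{n+r}Y$ and $v\colon\Omega_{\Acat}^{n+r}Y\to\Omega_{\Acat}^{n}M$ with $vu=1$ in $\stable{\Acat}$. Applying the induced endofunctor $\Omega_{\Acat}^{c}$ of $\stable{\Acat}$ gives a stable retraction
\[
 \Omega_{\Acat}^{n+c}M\rightleftarrows\Omega_{\Acat}^{n+c+r}Y .
\]
Set $X=\Omega_{\Acat}^{c}M$ and $Y'=\Omega_{\Acat}^{c}Y$. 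Both lie in $\Xcat$ by hypothesis. By (a), the left-hand object is stably isomorphic to $\Omega_{\Xcat}^{n}X$ and the right-hand object to $\Omega_{\Xcat}^{n+r}Y'$; all four objects lie in $\Xcat$, because $\Xcat$ is closed under syzygies and direct summands. By (b), the retraction is a stable retraction in $\stable{\Xcat}$. Therefore $r\text{-}\dell_{\Xcat}X\leq n$, which is the claimed inequality.

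The main obstacle is bookkeeping rather than a genuine difficulty. The key points are that the chosen syzygies are only defined up to projective summands, and that (b) requires the transported isomorphisms to be isomorphisms in $\stable{\Xcat}$ and not merely in $\stable{\Acat}$. Both are settled by the equality $\Proj\Xcat=\Proj\Acat$ together with \cref{lem:schanuel}.
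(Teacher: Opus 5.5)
Your proposal is correct and follows the paper's proof: apply $\Omega_{\Acat}^{c}$ to an ambient stable retraction and then use that, because $\Xcat$ is resolving with $\Proj\Xcat=\Proj\Acat$, ambient syzygies of objects of $\Xcat$ are relative syzygies and stable morphisms between objects of $\Xcat$ are the same in both stable categories. Your steps (a) and (b) spell out these two compatibilities, which the paper asserts in a single sentence.
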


\begin{proof}
Put $a=r\text{-}\dell_{\Acat}M<\infty$ and choose an ambient stable retraction
\[
 \Omega_{\Acat}^{a}M
 \rightleftarrows
 \Omega_{\Acat}^{a+r}N.
\]
Applying $\Omega_{\Acat}^{c}$ gives
\[
 \Omega_{\Acat}^{a+c}M
 \rightleftarrows
 \Omega_{\Acat}^{a+r+c}N.
\]
By \cref{ass:localization}, both $\Omega_{\Acat}^{c}M$ and $\Omega_{\Acat}^{c}N$ belong to $\Xcat$. Since $\Xcat$ is resolving and has the same projective objects as $\Acat$, the projective resolutions of objects of $\Xcat$ compute their relative syzygies. The displayed retraction is therefore precisely a relative stable retraction
\[
 \Omega_{\Xcat}^{a}(\Omega_{\Acat}^{c}M)
 \rightleftarrows
 \Omega_{\Xcat}^{a+r}(\Omega_{\Acat}^{c}N).
\]\end{proof}

\begin{lemma}\label{lem:derived-localization-syzygy}
Under \cref{ass:localization}, for every $M\in\Acat$ and $k\geq1$,
\begin{equation*}\label{eq:derived-localization-syzygy}
 k\text{-}\ddell_{\Xcat}(\Omega_{\Acat}^{c}M)
 \leq k\text{-}\ddell_{\Acat}M.
\end{equation*}
\end{lemma}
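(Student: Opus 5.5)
Put $m=k\text{-}\ddell_{\Acat}M<\infty$ and fix an ambient witness
\[
 0\to C_t\to\cdots\to C_0\to M\to0,\qquad t\leq m,\qquad (i+k)\text{-}\dell_{\Acat}C_i\leq m-i .
\]
Apply \cref{lem:lift-witness} in $\Acat$ with $s=c$. This gives an admissibly exact sequence
\[
 0\to C_t^{[c]}\to\cdots\to C_0^{[c]}\to\Omega_{\Acat}^{c}M\to0,
\]
where $C_i^{[c]}\simeq\Omega_{\Acat}^{c}C_i$ in $\stable{\Acat}$. We will show that this sequence is already a $k$-derived witness inside $\Xcat$ of value $m$.

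\emph{Step 1: everything lies in $\Xcat$, and the conflations are conflations of $\Xcat$.} In the Horseshoe construction of \cref{lem:lift-witness}, every term $C_i^{[c]}$ and every cycle object is an iterated kernel of projective deflations. So each such object is a $c$-th syzygy in $\Acat$ of some object (of $C_i$, or of a cycle object $Z_i$), up to projective summands. The precise statement is that the cycle objects of the lifted sequence are the fixed syzygies $\Omega^{c}_{\Acat}Z_i$, and $C_i^{[c]}$ is the kernel of a projective deflation onto $\Omega^{c-1}$-level terms, obtained by iterating the Horseshoe construction. By \cref{ass:localization}, $\Omega^c_{\Acat}$ of anything lies in $\Xcat$. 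Since $\Xcat$ is closed under direct summands and contains the projectives, and syzygies are well defined up to projective summands (\cref{lem:schanuel}), both the $C_i^{[c]}$ and the cycle objects lie in $\Xcat$. If $c=0$ there is nothing to lift, but then $\Xcat=\Acat$ by the assumption, and the statement is trivial. The conflations have all three terms in $\Xcat$, so they are conflations for the restricted exact structure. Hence the sequence is admissibly exact in $\Xcat$.

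\emph{Step 2: transfer the delooping bounds.} Apply \cref{lem:ordinary-localization} with $r=i+k$:
\[
 (i+k)\text{-}\dell_{\Xcat}(\Omega_{\Acat}^{c}C_i)\leq(i+k)\text{-}\dell_{\Acat}C_i\leq m-i .
\]
We need this bound for $C_i^{[c]}$ itself. The objects $C_i^{[c]}$ and $\Omega^c_{\Acat}C_i$ are stably isomorphic in $\Acat$, hence isomorphic after adding projective summands by \cref{lem:schanuel}. Those projectives are projective in $\Xcat$ because $\Proj\Xcat=\Proj\Acat$. So \cref{prop:basic-properties}(2), applied in $\Xcat$, gives the same bound for $C_i^{[c]}$. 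With $t\leq m$ this yields $k\text{-}\ddell_{\Xcat}(\Omega^c_{\Acat}M)\leq m$.

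\emph{Main obstacle.} The delicate point is Step 1: verifying that the cycle objects produced by the Horseshoe iteration lie in $\Xcat$. If membership via the \cref{ass:localization} argument is awkward, I would argue differently. Each cycle object of the lifted sequence is by construction the chosen $\Omega^c_{\Acat}Z_i$, and each $C_i^{[c]}$ sits in a conflation between two such syzygies. Closure of the resolving subcategory $\Xcat$ under extensions then finishes Step 1 directly, which avoids tracking the summands.
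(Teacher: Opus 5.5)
Your proof is correct and follows essentially the same route as the paper: lift the ambient witness by $c$ syzygies via \cref{lem:lift-witness}, check that the lifted sequence lies in $\Xcat$, and transfer the bounds using \cref{lem:ordinary-localization} and projective stability. The only difference is in Step 1. The paper gets $C_i^{[c]}\in\Xcat$ from its stable isomorphism with $\Omega_{\Acat}^{c}C_i$, and then puts the cycle objects into $\Xcat$ inductively by closure under kernels of deflations, starting from $Z_0^{[c]}=\Omega_{\Acat}^{c}M$. You instead read off from the Horseshoe construction that the cycle objects are themselves $c$-th syzygies of the $Z_i$, which is equally valid.
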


\begin{proof}
Let $m=k\text{-}\ddell_{\Acat}M<\infty$ and choose an admissibly exact sequence
\[
 0\to C_t\to\cdots\to C_0\to M\to0,
 \qquad t\leq m,
\]
with $(i+k)\text{-}\dell_{\Acat}C_i\leq m-i.$
By \cref{lem:lift-witness}, applied in $\Acat$ with $s=c$, there is an admissibly exact sequence
\begin{equation}\label{eq:localized-witness}
 0\to C_t^{[c]}\to\cdots\to C_0^{[c]}
 \to\Omega_{\Acat}^{c}M\to0
\end{equation}
with $C_i^{[c]}\simeq\Omega_{\Acat}^{c}C_i$ stably.
By  \cref{ass:localization}, $\Omega_{\Acat}^{c}C_i\in\Xcat$. Schanuel comparison gives projectives $P_i,Q_i$ with
$ C_i^{[c]}\oplus P_i
 \simeq
 \Omega_{\Acat}^{c}C_i\oplus Q_i.$
The right-hand side belongs to $\Xcat$. Since a resolving subcategory contains the  projectives and is closed under direct summands, $C_i^{[c]}\in\Xcat$.
Write \eqref{eq:localized-witness} in $\Acat$ as conflations
\[
 0\to Z_{i+1}^{[c]}\to C_i^{[c]}\to Z_i^{[c]}\to0,
\]
with $Z_0^{[c]}=\Omega_{\Acat}^{c}M$. The latter belongs to $\Xcat$. Since $C_0^{[c]}\in\Xcat$ and $\Xcat$ is resolving, the kernel $Z_1^{[c]}$ belongs to $\Xcat$. Induction gives $Z_i^{[c]}\in\Xcat$ for every $i$, so every conflation in \eqref{eq:localized-witness} lies in the restricted exact structure of $\Xcat$.
Finally, \cref{lem:ordinary-localization} and stable invariance give
\[
 (i+k)\text{-}\dell_{\Xcat}C_i^{[c]}
 \leq(i+k)\text{-}\dell_{\Acat}C_i
 \leq m-i.
\]
Thus \eqref{eq:localized-witness} is a relative $k$-derived delooping level of value $m$.
\end{proof}

\begin{theorem}\label{thm:localization}
Under \cref{ass:localization}, for every $X\in\Xcat$ and every $k\geq1$,
\begin{equation*}\label{eq:localization-main}
 k\text{-}\ddell_{\Acat}X
 \leq k\text{-}\ddell_{\Xcat}X
 \leq c+(k+c)\text{-}\ddell_{\Acat}X.
\end{equation*}
\end{theorem}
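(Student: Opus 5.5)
The lower bound $k\text{-}\ddell_{\Acat}X\leq k\text{-}\ddell_{\Xcat}X$ will follow from \cref{thm:transfer} applied to the inclusion $\iota:\Xcat\to\Acat$. The inclusion is exact because $\Xcat$ carries the restricted exact structure. By \cref{ass:localization}, $\Proj\Xcat=\Proj\Acat$, so every projective of $\Xcat$ stays projective in $\Acat$. Hence $\pd(\iota)=0$, and \eqref{eq:ddell-transfer1} with $c=0$ gives the inequality. This is the same argument as in \cref{prop:ambient-relative-one-way}.

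For the upper bound, I will reduce to $c$ steps of syzygy in $\Acat$ and then apply \cref{lem:derived-localization-syzygy}. Fix $X\in\Xcat$ and a projective resolution in $\Acat$:
\[
 0\to\Omega_{\Acat}^{c}X\to P_{c-1}\to\cdots\to P_0\to X\to0 .
\]
Because $\Xcat$ is resolving with the same projectives, every intermediate syzygy $\Omega_{\Acat}^{i}X$ lies in $\Xcat$. Each conflation $0\to\Omega^{i+1}_{\Acat}X\to P_i\to\Omega^i_{\Acat}X\to0$ is therefore a projective conflation in $\Xcat$, so $\Omega^i_{\Xcat}X=\Omega^i_{\Acat}X$. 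Iterating \cref{lem:ddell-syzygy-step} inside $\Xcat$ $c$ times gives
\[
 k\text{-}\ddell_{\Xcat}X\leq 1+(k+1)\text{-}\ddell_{\Xcat}(\Omega_{\Xcat}X)\leq\cdots\leq c+(k+c)\text{-}\ddell_{\Xcat}(\Omega_{\Acat}^{c}X).
\]
Applying \cref{lem:derived-localization-syzygy} with $M=X$ and $k$ replaced by $k+c$ then yields $(k+c)\text{-}\ddell_{\Xcat}(\Omega^c_{\Acat}X)\leq(k+c)\text{-}\ddell_{\Acat}X$, and the upper bound follows. When $c=0$ the chain is empty and the statement is exactly \cref{lem:derived-localization-syzygy}.

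The main point to check is the identification of relative and ambient syzygies used in this iteration. It relies on closure of $\Xcat$ under kernels of deflations: $P_i$ and $\Omega^i_{\Acat}X$ lie in $\Xcat$, so the kernel does too. Because the projective classes agree, a choice of $\Omega_{\Xcat}$ is a legitimate choice of $\Omega_{\Acat}$, and by \cref{lem:schanuel} together with \cref{prop:basic-properties}(2) the levels do not depend on this choice. \cref{lem:ddell-syzygy-step} is applied in the exact category $\Xcat$, which is itself essentially small, idempotent complete (it is closed under direct summands) and has enough projectives. So all earlier results are available there.
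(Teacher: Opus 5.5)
Your proposal is correct and follows essentially the same route as the paper. The lower bound comes from \cref{thm:transfer} applied to the inclusion, using $\pd(\iota)=0$. The upper bound comes from iterating \cref{lem:ddell-syzygy-step} $c$ times inside $\Xcat$, identifying $\Omega^{c}_{\Xcat}X$ with $\Omega^{c}_{\Acat}X$, and then applying \cref{lem:derived-localization-syzygy} at level $k+c$. The only difference is cosmetic: you make the syzygies literally equal by choosing the ambient resolution, which lies in $\Xcat$ by resolving closure, whereas the paper cites Schanuel to get a stable isomorphism and then uses \cref{prop:basic-properties}(2).
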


\begin{proof}
Let $\iota:\Xcat\hookrightarrow\Acat$ denote the inclusion. By \cref{ass:localization}, $\iota$ is exact and sends relative projective objects to  projective objects. Hence $\pd(\iota)=0$, and \cref{thm:transfer} gives
\begin{equation*}\label{eq:localization-left}
 k\text{-}\ddell_{\Acat}X\leq k\text{-}\ddell_{\Xcat}X.
\end{equation*}
By \cref{lem:ddell-syzygy-step}, we have
\[
(k+r)\text{-}\ddell_{\Xcat}(\Omega_{\Xcat}^{r}X)
\leq1+(k+r+1)\text{-}\ddell_{\Xcat}(\Omega_{\Xcat}^{r+1}X).
\]
By induction , we have
\begin{equation*}\label{eq:localization-intermediate}
 k\text{-}\ddell_{\Xcat}X
 \leq c+(k+c)\text{-}\ddell_{\Xcat}(\Omega_{\Xcat}^{c}X).
\end{equation*} Since the projective objects of the two exact categories coincide, \cref{lem:schanuel} gives a stable isomorphism
\begin{equation*}\label{eq:relative-ambient-c-syzygy}
 \Omega_{\Xcat}^{c}X\simeq\Omega_{\Acat}^{c}X
 \qquad\text{in }\stable{\Xcat}.
\end{equation*}
The object on the right lies in $\Xcat$. By \cref{prop:basic-properties}(2),  and \cref{lem:derived-localization-syzygy} with, we obtain
\[
 (k+c)\text{-}\ddell_{\Xcat}(\Omega_{\Xcat}^{c}X)
 =(k+c)\text{-}\ddell_{\Xcat}(\Omega_{\Acat}^{c}X)
 \leq(k+c)\text{-}\ddell_{\Acat}X.
\]
Combining the above inequality, the proof is complete.
\end{proof}

We specialize to the Brenner--Butler torsion-free class.

\begin{proposition}\label{prop:SubDT-resolving}
Let $T$ be a support $\tau$-tilting right $A$-module and $B=\End_A(T)$. Then
\[
 \Scat=\Sub(DT)\subseteq\modu B
\]
is a resolving subcategory, its projective objects are precisely $\operatorname{proj}B$, and
\begin{equation*}\label{eq:first-syzygies-SubDT}
 \Omega_B(\modu B)\subseteq\Scat.
\end{equation*}
\end{proposition}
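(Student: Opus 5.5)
The plan has three parts: establish that $\Scat$ is resolving, identify its projective objects, and then show that first syzygies land in $\Scat$. Throughout, I would use that $\Scat=\Sub(DT)$ is the torsion-free class of the torsion pair in $\modu B$ obtained from the Brenner--Butler theorem. By \cref{prop:fac-projectives}, $T$ is a $1$-tilting $\barA$-module and $B=\End_{\barA}(T)$, so the classical tilting theorem \cite[Chapter~VI]{ASS} applies. It gives a torsion pair $(\Ker(-\otimes_BT),\ \Sub(DT))$ in $\modu B$, together with the equivalence $F=\Hom_A(T,-)\colon\Ccat\to\Scat$ of \cref{prop:canonical-semibrick}. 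In particular, $\Scat$ is closed under submodules, extensions and isomorphisms, hence under direct summands.

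\emph{Resolving.} First, $B_B=\Hom_A(T,T)=F(T)$, and $F$ takes values in $\Sub(DT)$; therefore $\operatorname{proj}B=\add B_B\subseteq\Scat$. Closure under extensions holds because $\Scat$ is torsion-free. Closure under kernels of epimorphisms follows because such a kernel is a submodule of an object of $\Scat$. With the exact structure induced from $\modu B$, these properties make $\Scat$ a resolving subcategory.

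\emph{Projective objects.} Every projective $B$-module lies in $\Scat$ and is projective for the restricted exact structure. Conversely, let $Q\in\Scat$ be projective in $\Scat$. Take its projective cover $0\to\Omega_BQ\to P\to Q\to0$ in $\modu B$. The kernel $\Omega_BQ$ is a submodule of $P\in\Scat$, so it lies in $\Scat$, and the sequence is a conflation in $\Scat$. By relative projectivity of $Q$ it splits, so $Q\in\add P=\operatorname{proj}B$. The same conclusion also follows from \cref{prop:canonical-semibrick}, since $\Proj\Ccat=\add T$ corresponds to $\operatorname{proj}B$ under the exact equivalence $F$.

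\emph{Syzygies.} For any $M\in\modu B$, the first syzygy $\Omega_BM$ is a submodule of a projective module, which lies in $\Scat$. Submodule closure then gives $\Omega_B(\modu B)\subseteq\Scat$.

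The only real point is the first step: making sure the Brenner--Butler torsion-free class is exactly $\Sub(DT)$ and is closed under submodules and extensions. Here $T$ is only support $\tau$-tilting over $A$, so I would pass to $\barA$, where $T$ is tilting, and quote the tilting theorem there. This is legitimate because $\End_A(T)=\End_{\barA}(T)$ and $\Hom_A(T,-)=\Hom_{\barA}(T,-)$ on $\Fac T$.
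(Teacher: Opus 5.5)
Your proposal is correct and follows essentially the same route as the paper: $\Scat=\Sub(DT)$ is closed under extensions and submodules because it is the Brenner--Butler torsion-free class, its relative projectives are identified with $\operatorname{proj}B$, and first syzygies lie in $\Scat$ as submodules of projectives. Your direct splitting argument for the relative projectives (using that $\Omega_BQ\in\Scat$) is a slightly more self-contained alternative to the paper's appeal to the equivalence $F$ and $\Proj\Ccat=\add T$; otherwise the two proofs coincide.
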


\begin{proof}
The category $\Scat$ is extension closed and submodule closed. Under the exact equivalence of \cref{prop:canonical-semibrick}, its projective objects correspond to $\add T$, and hence are exactly the projective $B$-modules. Thus $\Scat$ is resolving. If
\[
 0\to\Omega_BM\to P_M\to M\to0
\]
is a projective-cover sequence, then $P_M\in\Scat$ and the submodule $\Omega_BM$ belongs to $\Scat$.
\end{proof}

\begin{corollary}\label{cor:SubDT-localization}
Let $T$ be a support $\tau$-tilting right $A$-module and $B=\End_A(T)$. For $X\in\Sub(DT)$ and every $k\geq1$, we have
\begin{equation*}\label{eq:SubDT-localization}
 k\text{-}\ddell_BX
 \leq k\text{-}\ddell_{\Sub(DT)}X
 \leq1+(k+1)\text{-}\ddell_BX.
\end{equation*}
\end{corollary}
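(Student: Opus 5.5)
This is a direct specialization of \cref{thm:localization} with $\Acat=\modu B$, $\Xcat=\Scat=\Sub(DT)$ and $c=1$. The plan is to verify that this pair satisfies \cref{ass:localization} and then read off the two inequalities.

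First, $\modu B$ is an abelian category with its standard exact structure. It is essentially small, idempotent complete, and has enough projective objects, since $B$ is a finite-dimensional algebra.

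Second, \cref{prop:SubDT-resolving} supplies the conditions on the subcategory. It shows that $\Scat$ is a full resolving subcategory of $\modu B$, and that $\Proj\Scat=\operatorname{proj}B=\Proj(\modu B)$. It also gives $\Omega_B M\in\Scat$ for every $M\in\modu B$, which is exactly the requirement $\Omega^{c}_{\Acat}M\in\Xcat$ with $c=1$.

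One point needs a little care. \cref{ass:localization} uses the exact structure on $\Scat$ restricted from $\modu B$, so the relative invariant $k\text{-}\ddell_{\Sub(DT)}$ must be computed in that structure. This structure is the one induced from $\modu B$ by extension closure of $\Scat$, since its conflations are the short exact sequences of $B$-modules with all three terms in $\Scat$. No other structure enters the statement, so this is consistent. Idempotent completeness of $\Scat$ holds because $\Scat$ is closed under direct summands.

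With the hypotheses checked, \cref{thm:localization} with $c=1$ gives, for every $X\in\Sub(DT)$ and $k\geq1$,
\[
 k\text{-}\ddell_BX\leq k\text{-}\ddell_{\Sub(DT)}X\leq 1+(k+1)\text{-}\ddell_BX,
\]
which is the claim. The left inequality could also be quoted directly from \cref{prop:ambient-relative-one-way}.

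There is no genuine obstacle here. The only step needing attention is confirming that the projective objects of $\Scat$ are exactly $\operatorname{proj}B$. This comes from the Brenner--Butler equivalence $F$ in \cref{prop:canonical-semibrick}, under which $\add T$ corresponds to $\operatorname{proj}B$, and it is already recorded in \cref{prop:SubDT-resolving}.
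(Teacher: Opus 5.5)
Your proposal is correct and is exactly the paper's argument: the corollary is obtained by applying \cref{thm:localization} with $\Acat=\modu B$, $\Xcat=\Sub(DT)$ and $c=1$, with \cref{prop:SubDT-resolving} supplying the hypotheses of \cref{ass:localization}. Your extra checks (the restricted exact structure, idempotent completeness, and equality of projectives via the Brenner--Butler equivalence) are details the paper leaves implicit.
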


\begin{proof}
Apply \cref{thm:localization} with $c=1$ and \cref{prop:SubDT-resolving}.
\end{proof}

We finally connect the relative theory to $\tau$-tilting modules. The following result records the finite range of self-orthogonality actually used by the argument.

\begin{theorem}\label{thm:finite-range-tor}
Let $T$ be a $\tau$-tilting right $A$-module, put
\[
 I=\ann_AT,\qquad \barA=A/I,\qquad \Ccat=\Fac T,
\]
and let $k\geq1$. Assume
\[
 m=k\text{-}\ddell_{\Ccat}(D({}_{\barA}\barA))<\infty
\]
and
\begin{equation*}\label{eq:finite-range-orthogonality}
 \Ext_A^j(T,T)=0
 \qquad(2\leq j\leq m+k+1).
\end{equation*}
Then
\begin{equation*}\label{eq:Tor-vanishing}
 \Tor_k^A(T_A,{}_{A}I)=0.
\end{equation*}
If $k=1$, then $T$ is a $1$-tilting $A$-module.
\end{theorem}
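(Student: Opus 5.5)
We plan to deduce the statement from \cref{thm:ambient-ext}, applied with $\Acat=\modu A$, $\E=\Ccat=\Fac T$ (a full extension-closed subcategory with the induced exact structure and enough projectives by \cref{prop:fac-projectives}) and $X=D({}_{\barA}\barA)\in\Ccat$. Since $\Proj\Ccat=\add T$, the hypothesis $\Ext_A^j(T,P)=0$ for $P\in\Proj\Ccat$ and $2\leq j\leq m+k+1$ follows from the assumed vanishing of $\Ext_A^j(T,T)$ on the same range, by additivity of Ext. Taking the projective object $T$ of $\Ccat$ as the test object in \cref{thm:ambient-ext}, we obtain
\[
 \Ext_A^{k+1}(T,D({}_{\barA}\barA))=0.
\]

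Next we translate this into the Tor statement. By the standard duality $\Ext_A^{i}(T,D(N))\simeq D\Tor_i^A(T,N)$ for a left $A$-module $N$, we get $\Tor_{k+1}^A(T_A,{}_A\barA)=0$. Now apply $T\otimes_A-$ to the short exact sequence of left modules $0\to I\to A\to\barA\to0$. Since $A$ is projective, the long exact sequence gives $\Tor_{k+1}^A(T,\barA)\simeq\Tor_k^A(T,I)$ for $k\geq1$. Hence $\Tor_k^A(T_A,{}_AI)=0$.

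For $k=1$, this is condition (4) of \cref{prop:tor-criterion}, which applies since $T$ is $\tau$-tilting, so $T$ is $1$-tilting. Alternatively, $\Ext_A^2(T,D({}_{\barA}\barA))=0$ is already condition (3).

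The only delicate point is checking that \cref{thm:ambient-ext} applies: its Ext groups are computed in the ambient abelian category $\modu A$, which matches $\Ext_A$. No Ext vanishing in degree $1$ is needed, because \cref{lem:ambient-dimension-shift} handles that degree via projectivity of $T$ in $\Ccat$. Everything else is the routine Ext–Tor duality and dimension shift.
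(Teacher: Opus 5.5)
Your proposal is correct and follows the paper's proof essentially step for step: apply \cref{thm:ambient-ext} in $\Ccat=\Fac T$ with $T\in\Proj\Ccat=\add T$ and $X=D({}_{\barA}\barA)$, convert $\Ext_A^{k+1}(T,D({}_{\barA}\barA))=0$ into $\Tor_k^A(T,I)=0$ via Ext--Tor duality and the dimension shift along $0\to I\to A\to\barA\to0$, then invoke \cref{prop:tor-criterion} for $k=1$. Your side remark that condition~(3) of \cref{prop:tor-criterion} can be used directly when $k=1$ is a valid shortcut; the Tor translation is still needed for the general-$k$ statement.
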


\begin{proof}
By \cref{prop:fac-projectives}, the induced exact category $\Ccat$ has
$\Proj\Ccat=\add T$ and contains $D({}_{\barA}\barA)$. By assumption,
\[
 \Ext_A^j(T,P)=0
 \qquad(P\in\Proj\Ccat,\ 2\leq j\leq m+k+1).
\]
Apply \cref{thm:ambient-ext}, we obtain
 $\Ext_A^{k+1}\bigl(T_A,D({}_{A}\barA)\bigr)=0.$
For right $A$-modules, the standard duality isomorphism is
$\Ext_A^{k+1}\bigl(T_A,D({}_{A}\barA)\bigr)
 \simeq D\Tor_{k+1}^A(T_A,{}_{A}\barA).$
Applying $T_A\otimes_A-$ to the exact sequence of left $A$-modules
\[
 0\longrightarrow{}_{A}I\longrightarrow{}_{A}A
 \longrightarrow{}_{A}\barA\longrightarrow0
\]
and using the projectivity of ${}_{A}A$ gives
\begin{equation*}\label{eq:Tor-shift-ann}
 \Tor_{k+1}^A(T_A,{}_{A}\barA)
 \simeq\Tor_k^A(T_A,{}_{A}I)=0
 \qquad(k\geq1).
\end{equation*}
In particular,  \cref{prop:tor-criterion} implies that $T$ is $1$-tilting for $k=1$.
\end{proof}

\begin{corollary}\label{cor:selforth-tilting}
Let $T$ be a self-orthogonal $\tau$-tilting right $A$-module, put $I=\ann_A T$, $\barA=A/I$ and $\Ccat=\Fac T$.
If $k\text{-}\ddell_{\Ccat}(D({}_{\barA}\barA))<\infty$, then $\Tor_k^A(T,I)=0$. In particular,
\[
 \ddell_{\Ccat}(D({}_{\barA}\barA))<\infty
 \quad\Longrightarrow\quad T\text{ is }1\text{-tilting}.
\]
The same conclusion follows from $\gddell(\Fac T)<\infty$.
\end{corollary}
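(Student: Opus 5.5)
This corollary is a direct specialization of \cref{thm:finite-range-tor}. Fix $k\geq1$ and put $m=k\text{-}\ddell_{\Ccat}(D({}_{\barA}\barA))$, which is finite by hypothesis. Self-orthogonality of $T$ gives $\Ext_A^j(T,T)=0$ for every $j\geq1$. In particular it holds throughout the finite range $2\leq j\leq m+k+1$ that \cref{thm:finite-range-tor} requires. That theorem then yields $\Tor_k^A(T_A,{}_AI)=0$.

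For the displayed implication, take $k=1$. The hypothesis $\ddell_{\Ccat}(D({}_{\barA}\barA))<\infty$ is exactly $1\text{-}\ddell_{\Ccat}(D({}_{\barA}\barA))<\infty$. The last sentence of \cref{thm:finite-range-tor} (equivalently, \cref{prop:tor-criterion}(4)$\Rightarrow$(1) applied to $\Tor_1^A(T,I)=0$) shows that $T$ is $1$-tilting.

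For the final sentence, assume $\gddell(\Fac T)<\infty$. By \cref{prop:fac-projectives}, $D({}_{\barA}\barA)\in\Ccat$. Hence $\ddell_{\Ccat}(D({}_{\barA}\barA))\leq\gddell(\Ccat)<\infty$; this is also the left inequality of \cref{thm:DbarA-control}. The $k=1$ case then applies.

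There is no genuine obstacle; the only point to check is bookkeeping. The hypothesis of \cref{thm:finite-range-tor} uses the $k$-level invariant, while the $\gddell$ statement only controls the $1$-level one, so the last sentence should be invoked only for the $1$-tilting conclusion. This is consistent with how the corollary is phrased.
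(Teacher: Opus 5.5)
Your proposal is correct and matches the paper's argument. The paper gives no separate proof: the corollary is read off from \cref{thm:finite-range-tor} because self-orthogonality supplies the required vanishing for $2\leq j\leq m+k+1$, and the final sentence follows from $D({}_{\barA}\barA)\in\Ccat$, i.e.\ the left inequality of \cref{thm:DbarA-control}, used only in the case $k=1$ exactly as you note.
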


\begin{theorem}\label{thm:ambient-tilting}
Let $T$ be a $\tau$-tilting right $A$-module and $B=\End_A(T)$. 
If $d=2\text{-}\ddell_B(DT)<\infty$ and
$\Ext_A^j(T,T)=0$ for $2\leq j\leq d+3,
$
then $T$ is a $1$-tilting $A$-module. In particular, the conclusion holds whenever $T$ is self-orthogonal.

More generally, if $q\geq2$, $d_q=q\text{-}\ddell_B(DT)<\infty$, and
$\Ext_A^j(T,T)=0$ for $2\leq j\leq d_q+3$, then $T$ is $1$-tilting.
\end{theorem}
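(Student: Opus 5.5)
The plan is to reduce to \cref{thm:finite-range-tor} with $k=1$. Put $I=\ann_AT$, $\barA=A/I$, $\Ccat=\Fac T$ and $\Scat=\Sub(DT)$. It then suffices to show
\[
 m=\ddell_{\Ccat}(D({}_{\barA}\barA))\leq d+1 .
\]
Indeed, the required vanishing range $2\leq j\leq m+2$ is then contained in $2\leq j\leq d+3$. So \cref{thm:finite-range-tor} gives $\Tor_1^A(T,I)=0$, and \cref{prop:tor-criterion} shows that $T$ is $1$-tilting.

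First I identify the image of the cogenerator under the Brenner--Butler equivalence $F=\Hom_A(T,-)\colon\Ccat\to\Scat$ of \cref{prop:canonical-semibrick}. By tensor--Hom adjunction,
\[
 F(D({}_{\barA}\barA))=\Hom_A(T,D\barA)\simeq D(T\otimes_A\barA)=D(T/TI)=DT,
\]
since $TI=0$. This is an isomorphism of right $B$-modules, natural in the left $B$-action on $T$. Hence $F$ carries $D({}_{\barA}\barA)$ to $DT$, which visibly lies in $\Scat$.

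Next I transfer levels along $F$. Both $F$ and its exact quasi-inverse send projectives to projectives, since $\add T$ corresponds to $\operatorname{proj}B$. So both have projective amplitude $0$, and \cref{thm:transfer} applied in both directions gives
\[
 \ddell_{\Ccat}(D({}_{\barA}\barA))=\ddell_{\Scat}(DT).
\]
Then \cref{cor:SubDT-localization} with $k=1$ gives
\[
 \ddell_{\Scat}(DT)\leq 1+2\text{-}\ddell_B(DT)=1+d .
\]
This yields $m\leq d+1$, as needed. The self-orthogonal case is immediate, because then all $\Ext_A^j(T,T)$ with $j\geq1$ vanish.

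For the general statement with $q\geq2$, \cref{lem:monotonicity} gives $d=2\text{-}\ddell_B(DT)\leq q\text{-}\ddell_B(DT)=d_q<\infty$. The hypothesis on the range up to $d_q+3$ therefore covers the range up to $d+3$, and the first case applies.

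The main point needing care is the identification $F(D\barA)\simeq DT$ as $B$-modules, together with the exactness and projective-preservation of both $F$ and its quasi-inverse, so that \cref{thm:transfer} legitimately gives equality rather than just one inequality. Everything else is bookkeeping of the index range.
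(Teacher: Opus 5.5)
Your proposal is correct and follows essentially the same route as the paper. \cref{cor:SubDT-localization} with $k=1$ gives $\ddell_{\Sub(DT)}(DT)\leq d+1$, the Brenner--Butler equivalence carries $D({}_{\barA}\barA)$ to $DT$ so that \cref{thm:transfer} yields $\ddell_{\Fac T}(D({}_{\barA}\barA))\leq d+1$, and \cref{thm:finite-range-tor} with $k=1$ (plus \cref{lem:monotonicity} for $q\geq 2$) finishes. The differences are cosmetic: you get $F(D\barA)\simeq DT$ by tensor--Hom adjunction instead of the explicit evaluation map, and you transfer in both directions to get an equality, whereas the paper only needs the inequality coming from the quasi-inverse.
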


\begin{proof}
By \cref{cor:SubDT-localization} with $k=1$,
\[
 \ddell_{\Sub(DT)}(DT)
 \leq1+2\text{-}\ddell_B(DT)=d+1.
\]
The exact equivalence \cref{prop:canonical-semibrick} sends $D({}_{\barA}\barA)$ to $DT$, indeed, $$F(D(_{\barA}\barA))=\Hom_A(T,D\barA)\cong \Hom_{\barA}(T,D\barA)\cong DT,~~f\mapsto[t\mapsto f(t)(1)].$$
So \cref{thm:transfer} gives
\[
 m:=\ddell_{\Fac T}(D({}_{\barA}\barA))\leq d+1.
\]
which implies
$\Ext_A^j(T,T)=0$ for $2\leq j\leq m+2$. Apply \cref{thm:finite-range-tor}, we have  $T$ is a $1$-tilting $A$-module.

If $q\geq2$, repeated use of \cref{lem:monotonicity} gives
$2\text{-}\ddell_B(DT)\leq d_q$, and the same argument applies.
\end{proof}


\section{Examples}\label{sec:examples}

\begin{example}\label{ex:nontrivial-boundary}
Let $A$ be the cyclic Nakayama algebra
\[
 \begin{tikzcd}[column sep=4em]
 1\arrow[r,bend left=18,"\alpha"]&2\arrow[l,bend left=18,"\beta"]
 \end{tikzcd}
 \qquad\text{with}\qquad
 \alpha\beta\alpha=\beta\alpha\beta=0.
\] 
The Auslander--Reiten quiver may be displayed as
\[
\begin{tikzcd}[column sep=1.55em,row sep=0.4em]
	& P_1\arrow[dr] && P_2\arrow[dr] &\\
	M_2\arrow[dr]\arrow[ur]&&M_1\arrow[dr]\arrow[ur]&&M_2\\
		& S_2\arrow[ur] &&S_1\arrow[ur] &\\
\end{tikzcd}
\] where $P_i=e_iA$, $M_i=P_i/S_i$ and  the two occurrences of $M_2$ are identified. 

Let $T=P_1\oplus S_1$ be a basic $\tau$-tilting $A$-module. Then $$
 \Ccat=\Fac T=\add\{P_1,S_1,M_1\}=\Filt(\Fac M_1).$$
The quotient and socle maps give a conflation
\begin{equation*}\label{eq:boundary-example-conflation}
 0\longrightarrow S_1\xrightarrow{}P_1
 \xrightarrow{}M_1\longrightarrow0.
\end{equation*}
Consequently, $\Omega_{ \Ccat}M_1\cong S_1$. Every object of $\Ccat$ has relative projective dimension at most one, whereas $M_1$ is not relative projective. Therefore, for every $k\geq1$,
$k\text{-}\dell_{\Ccat}M_1=k\text{-}\ddell_{\Ccat}M_1=1.$

The endomorphism algebra of $T$ is $B=\End_{A}(T)\simeq KQ'/\langle ba\rangle,$
where  \[
Q':\quad
\begin{tikzcd}[column sep=4em]
	1\arrow[r,bend left=18,"a"]&2\arrow[l,bend left=18,"b"]
\end{tikzcd},
\]
Let $Q_i=e_iB$ and $L_i=\topm Q_i$ ($i=1,2$). Then
\[
 \rad Q_1\simeq Q_2,
 \qquad
 \rad Q_2\simeq L_1,
\]
and the simple modules have minimal projective resolutions
\[
 0\longrightarrow Q_2\longrightarrow Q_1\longrightarrow L_1\longrightarrow0,
\]
\[
 0\longrightarrow Q_2\longrightarrow Q_1\longrightarrow Q_2
 \longrightarrow L_2\longrightarrow0.
\]
Hence $\pd_{B}L_1=1$, $\pd_{B}L_2=2$ and $\gldim B=2$. Renaming the opposite arrows yields the same presentation, so $B^{\op}\simeq B$. Let $F=\Hom_{A}(T,-)$. Since $\dim_K\Hom_{A}(P_1,M_1)=1,$
 and $\Hom_{A}(S_1,M_1)=0,$
the $B$-module $F(M_1)$ has dimension vector $(1,0)$ and therefore $
 F(M_1)\simeq L_1\simeq\rad Q_2=\rad F(T_2)$ which imples 
 $N_2=M_1$. It follows that
\[
 \ddell_{T}\Ccat
 =\max\{\ddell_{\Ccat}M_1,1+2\text{-}\ddell_{\Ccat}M_1\}=2,
 \qquad
 \dell_{T}\Ccat=2.
\]
Moreover, the relative projectives $P_1,S_1$ have derived level zero and $M_1$ has level one, so
\[
 \gddell(\Ccat)=1<2=\ddell_{T}\Ccat.
\]
Finally,
\[
 \findim B^{\op}=\Findim B^{\op}
 =\ddell B=\ddell_{T}\Ccat=\dell_{T}\Ccat=2.
\]
\end{example}

For the final example we use the elementary product observation that projective objects, conflations, syzygies and stable morphisms in a finite product of exact categories are computed componentwise. It follows directly, by aligning the positions of stable retractions and padding derived witnesses by split conflations, that ordinary and derived delooping levels are componentwise maxima; the same is true for algebra invariants and for the canonical relative invariants of product $\tau$-tilting pairs.

\begin{example}\label{ex:genuine-strict}
Let $\Lambda_{\mathrm{KR}}$ be the Kershaw--Rickard algebra \cite{KershawRickard}. Guo and Igusa computed
\[
 \ddell\Lambda_{\mathrm{KR}}=1,
 \qquad
 \dell\Lambda_{\mathrm{KR}}=\infty
\]
in \cite{GuoIgusa}. Let $(A,T)$ be the pair from \cref{ex:nontrivial-boundary}, and put
\[
 A'=\Lambda_{\mathrm{KR}}\times A,
 \qquad
 T'=\Lambda_{\mathrm{KR}}\oplus T.
\]
Then $T'$ is a basic $\tau$-tilting $A'$-module,
\[
 \Fac T'=\modu\Lambda_{\mathrm{KR}}\times\Fac T\neq\add T',
 \qquad
 \End_A(T')\simeq\Lambda_{\mathrm{KR}}\times B,
\]
and its canonical test family contains the nonprojective boundary object $M_1$ from \cref{ex:nontrivial-boundary}. By the product observation,
\[
 \ddell_{T'}\Fac T'
 =\max\{\ddell\Lambda_{\mathrm{KR}},\ddell_{T}\Fac T\}=2,
\]
whereas
\[
 \dell_{T'}\Fac T'
 =\max\{\dell\Lambda_{\mathrm{KR}},\dell_{T}\Fac T\}=\infty.
\]
Thus
\[
 \ddell_{T'}\Fac T'<\dell_{T'}\Fac T'.
\]

\end{example}

\section*{Acknowledgments}

Hanpeng Gao is supported by  National Natural Science Foundation of China (No.12301041).
Dajun  Liu  is supported by the National Natural Science Foundation of China (No.12101003), the Key Research Project of Anhui Provincial Department of Education ( No. 2025AHGXZK31616), Anhui Province 2025 New Era Education Quality Engineering Project : Matrix Theory (Grant No. 2025kcszsfkc023). Houjun Zhang is supported by the National Natural Science Foundation of China (No. 12301051).

\section*{Data availability}
Data sharing not applicable to this article as no datasets were generated or analyzed during the current study.

\section*{Declarations}

{\bf Conflict of Interests} The authors has no conflicts of interest to declare that are relevant to the content of this article.

\end{document}